\newtheorem{theorem}{Theorem}
\newtheorem{lemma}{Lemma}
\newtheorem{proposition}{Proposition}
\newtheorem{remark}{Remark}
\newtheorem{corollary}{Corollary}
\newcommand{\na}{\nabla}
\newcommand{\p}{\partial}
\newcommand{\n}[2]{{\left\| #1 \right\|}_{#2}}
\newcommand{\f}[2]{\frac{#1}{#2}}
\newcommand{\lan}[1]{\left\langle #1\right\rangle}
\newcommand{\wh}[1]{\widehat{#1}}
\newcommand{\wt}[1]{\widetilde{#1}}
\newcommand{\al}{\alpha}
\newcommand{\be}{\beta}
\newcommand{\ga}{\gamma}
\newcommand{\de}{\delta}
\newcommand{\ve}{\varepsilon}
\newcommand{\La}{\Lambda}
\newcommand{\si}{\sigma}
\newcommand{\bt}{{\mathbf T}}
\newcommand{\br}{{\mathbf R}}
\newcommand{\cs}{\mathcal S}
\newcommand{\cf}{\mathcal F}
\begin{document}

\title
[Resonant phase-shift of periodic KdV evolution]
{Resonant phase-shift and global smoothing of the periodic Korteweg-de Vries equation in low regularity settings}

\author{Seungly Oh}

\address{Seungly Oh, 
405, Snow Hall, 1460 Jayhawk Blvd. , 
Department of Mathematics,
University of Kansas,
Lawrence, KS~66045, USA}
\date{\today}

\subjclass[2000]{Primary: 35Q53; Secondary: 35B10, 35B30. }

\keywords{KdV equations, normal forms, solutions in low regularity}

\begin{abstract}
We show a smoothing effect of near full derivative for low-regularity global-in-time solutions of the periodic Korteweg-de Vries (KdV) equation.  The smoothing is given by slightly shifting the space-time Fourier support of the nonlinear solution, which we call \emph{resonant phase-shift}.  More precisely, we show that $[\mathcal{S}u](t) - e^{-t\p_x^3} u(0) \in H^{-s+1-}$ where $u(0) \in H^{-s}$ for $0\leq s <1/2$ where $\mathcal{S}$ is the resonant phase-shift operator described below.  We use the normal form method to obtain the result.
\end{abstract}

\maketitle
\date{today}

\section{Introduction}
Consider the real-valued periodic Korteweg-de Vries (KdV) equation
\begin{equation}\label{kdvt}
\left|\begin{array}{l} u_t + u_{xxx} = \partial_x ( u^2); \qquad (t,x)\in \mathbf{R}\times \mathbf{T}\\
u(0,x)= u_0 \in H^{-s} (\mathbf{T})\end{array}\right. 
\end{equation}
where $\mathbf{T} = \mathbf{R} \mod [0,2\pi]$.  This initial value problem has been extensively studied in literature. In \cite{Bour}, Bourgain developed a weighted Sobolev space to prove the global well-posedness of \eqref{kdvt} on $L^2$.  The key idea of Bourgain space is to penalize space-time Fourier support away from that the of the linear solution~$\{(\tau,\xi): \tau-\xi^3 =0\}$, thus assuming that the nonlinear solution is near the free solution~$e^{-t\p_x^3} u_0$.  

Kenig, Ponce, Vega \cite{KPV1} adapted this idea to prove the local well-posedness of \eqref{kdvt} in $H^s$ for $s>-1/2$; and Colliander, Keel, Staffilani, Takaoka and Tao \cite{I1} improved the local theory to $s\geq -1/2$ and also proved the global well-posedness of \eqref{kdvt} for $s\geq -1/2$.  Here, the authors introduced \emph{I-method} for constructing almost-conserved quantities in order to iterate the local solutions to an arbitrary time interval~$[0,T]$.
 
In \cite{CCT}, Christ, Colliander, Tao proved that \eqref{kdvt} is ill-posed in $H^{s}$ for $s<-1/2$ in the sense that the solution map fails to be uniformly continuous.  In particular, this implies that any attempt at the contraction argument around $e^{-t\p_x^3}u_0$ will fail.  However, Kappeler and Topalov \cite{Kap} showed that \eqref{kdvt} is globally well-posed in $H^{-1}$ via inverse scattering method. This result implies that although the solution map of the periodic KdV is not smooth below $H^{-1/2}$, it is $C^0$ up to $H^{-1}$.  Molinet, \cite{mol1, mol2} showed that this result is sharp in the sense that the solution map of \eqref{kdvt} is discontinuous at every $C^{\infty}$~point in $H^{-1-}$.

For initial data in $L^2$, nearly linear dynamics of the nonlinear evolution of the periodic KdV has been studied by Babin, Ilyn, Titi \cite{titi} and by Erdogan, Tzirakis, Zharnitsky \cite{niko2, niko3}.  Furthermore, Erdogan, Tzirakis \cite{niko} proved that, given $u_0\in H^{s}$ for $s>-1/2$, the solution~$u(t)$ of \eqref{kdvt} satisfies $u(t) - e^{-t\p_x^3} u_0 \in H^{s_1}$ where $s_1 <\min(3s+1, s+1)$.  

The last result by Erdogan and Tzirakis is the standard notion of nonlinear smoothing, based on the idea that the nonlinear solution is a smooth perturbation of the linear one.  Bourgain in \cite{bour3, bour4} noted that the nonlinear Duhamel term is in many cases smoother than the initial data.  This proved to be a useful tool for analysing the growth bound of high Sobolev norms in dispersive PDEs.  Colliander, Staffilani and Takaoka \cite{Col} also used this smoothing effect to show the global well-posedness of KdV on $\mathbf{R}$ below $L^2$.  We remark that this was also the main heuristic behind the author's works with Stefanov \cite{OS2} and \cite{OS1} , where we proved the local well-posedness respectively  of the periodic ``good'' Boussinesq equation in $H^{-3/8+}$ and of 1-D quadratic Schr\"odinger equation in $H^{-1+}$.  The smoothing of such type was necessary in these settings because the standard bilinear Bourgain space estimates were shown to fail below $H^{-1/4}$ \cite{FS}  and $H^{-3/4}$ \cite{KPV2} respectively.

Due to the non-uniform continuity statement of \cite{CCT}, it is clear that the linear solution no longer dominates the evolution when $s<-1/2$.  As expected, the smoothing of such type given in \cite{niko2} vanishes as $s \searrow -1/2$.  In this paper, we will show that when the nonlinear solution is considered to be a perturbation of the resonant evolution term~$R^*[u_0]$ described below, one still gains almost a full derivative in $H^{-\f{1}{2}+}$. That is, when $u_0 \in H^s$, $u(t) -R^*[u_0](t) \in H^{s+1-}$ for all $s>-1/2$.  This can also be expressed as $[\mathcal{S}u](t) - e^{-t\p_x^3} u_0 \in H^{s+1-}$, where $\mathcal{S}$ is a continuous phase-shift operator in $H^s$ described in \eqref{def:phase}.  We will also show that when $u_0$ lives near $L^2$, the effect of the phase-shift is in fact smoothing, i.e. $[\mathcal{S}v](t) - v(t) \in H^{1+3s}$ for any $v(t) \in H^s$ for $s>-1/2$.  Thus, the fully nonlinear smoothing effect is also recovered by triangular inequality.

This non-resonant smoothing effect can be regarded as an evidence that the solutions of \eqref{kdvt} tends more toward $R^*[u_0]$, which is essentially the linear solution with a \emph{resonant phase-shift} in low-regularity settings.  This is more convincing when considering that $R^*[\cdot]$ is not uniformly continuous for $s<-1/2$ (see Remark~\ref{rem:nonuniform}).  Thus in the $C^0$~evolution of KdV in $H^s$ for $-1\leq s\leq -1/2$, it appears that $R^*[u_0]$ should dominate the nonlinear evolution.

Another evidence of this phenomena is found in the analysis of periodic modified KdV equation, i.e. \eqref{kdvt} with $\p_x(u^2)$ replaced with $\p_x(u^3)$.  The failure of uniform continuity is shown \cite{CCT} in $H^s$ for $s<1/2$, although $C^0$~global well-posedness in $L^2$ is implied \cite{KT} the periodic KdV via the bi-analyticity of Miura map.  Although harmonic analysis methods have not been able to reproduce the well-posedness at $L^2$, Takaoka, Tsutsumi \cite{TT} and Nakanishi, Takaoka, Tsutsumi \cite{NTT2} showed the local well-posedness of mKdV in $H^{3/8}$ and in $H^{1/3}$ respectively by considering the resonant phase-shift corresponding to the periodic mKdV.  A new type of Bourgain space was developed here, which penalizes functions whose space-time Fourier support is far from that of the resonant solution.  Considering the uniform discontinuity of the resonant solution below $H^{1/2}$ \cite[Exercise 4.21]{tao2} similar to Remark~\ref{rem:nonuniform}, their results seem to indicate that the resonant solution dominates the evolution of the periodic mKdV below $H^{1/2}$.
  
The main technique used in this paper is the \emph{normal form method}.  This method was first introduced by Shatah \cite{shatah1} in the study of Klein-Gordon equation with a quadratic non-linearity.  It has been adapted for many different types of quasi-linear disersive PDEs, for instance \cite{titi,NTT2,TT} etc.  Recently this concept was reformulated Germain, Masmoudi, Shatah, \cite{GMS1,GMS2} as the \emph{space-time resonance method}, and the authors produced a number of new results in literature using this method.

For the periodic KdV, Babin, Ilyin, Titi \cite{titi} applied the normal form transform (or \emph{differentiation by parts}), which smoothed the non-linearity as observed by Erdogan, Tzirakis in \cite{niko}.  The authors noted that the transform results in a trilinear resonant term, which can be reduced to an \emph{almost-linear} term due to massive cancellations as in the periodic mKdV \cite{Bour, TT}.  This is essentially why the resonant correction in this model is a simple phase-shift, rather than a \emph{genuinely non-linear} operation (i.e.  there are no nonlinear interactions among different space-time Fourier modes).  Instead of adapting the \emph{differentiation by parts} approach of \cite{titi}, we use a bilinear (and a trilinear) pseudo-differential operator to perform the normal form transform.  These operators are expressed as $T(\cdot, \cdot)$ and $J(\cdot,\cdot,\cdot)$ in Section~\ref{sec:setup}.

The following is the main result of this work:

\begin{theorem}\label{thm2}
Let $0\leq s<1/2$, $0<10\de <1-2s$, $0< \ga \leq 1-10\de$.  For any real-valued $u_0 \in H^{-s}(\mathbf{T})$ with $\widehat{u_0}(0) =0$, the solution~$u(t)$ of \eqref{kdvt} satisfies $u(t) - R^*[u_0](t) \in  H^{-s+\ga}(\mathbf{T})$ for all $t\in\mathbf{R}$.  More precisely, there exist constants $C= C(\de, \|u_0\|_{H^{-s}})$ and $\al(\de)=O(1/\de)$ such that
\begin{equation}\label{eq:smoothing}
\n{\mathcal{S}[u](t) - e^{-t\p_x^3} u_0}{H^{-s+\ga}} \leq C \lan{t}^{\al(\de)}
\end{equation}
where we define the \emph{resonant phase-shift operator}~$\mathcal{S}$ to be 
\begin{equation}\label{def:phase}
\cf_x [\mathcal{S}u](t,\xi) := \exp \left(-2i\f{|\wh{u_0}|^2(\xi)}{\xi}t\right) \wh{u}(t,\xi).
\end{equation}

Furthermore, we can write the Lipschitz property of the solution map in a smoother space:
\begin{equation}\label{eq:thmlip}
\|u(t)-v(t)\|_{H_x^{-s+\ga}} \leq C_{N,\delta} \lan{t}^{\al(\de)}\|u_0 - v_0\|_{H_x^{-s +\ga} },
\end{equation}
where $\|u_0\|_{H^{-s}} + \|v_0\|_{H^{-s}} <N$.
\end{theorem}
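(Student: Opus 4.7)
My plan is to apply the normal form method. I first pass to the interaction representation $v(t,\xi):=e^{-it\xi^3}\wh{u}(t,\xi)$, which transforms \eqref{kdvt} into
\[ \p_t v(t,\xi) = i\xi \sum_{\xi_1+\xi_2=\xi} e^{-it\Phi_2}v(t,\xi_1)v(t,\xi_2), \qquad \Phi_2 = 3\xi\xi_1(\xi-\xi_1).\]
The mean-zero hypothesis $\wh{u_0}(0)=0$ is preserved by the flow and keeps $\Phi_2$ away from zero on the convolution support, so the bilinear operator $T$ with multiplier $\xi/\Phi_2$ is well-defined. Setting $w:=v-T(v,v)$ and differentiating in time cancels the quadratic right-hand side (the phase derivative of $e^{-it\Phi_2}$ inside $T$ exactly reproduces it), leaving $\p_t w = J(v,v,v)$ for an explicit trilinear expression $J$ whose symbol carries a derivative gain through $\Phi_2^{-1}$.

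I next decompose $J = J_{\mathrm{res}}+J_{\mathrm{nr}}$ according to whether the cubic phase $\Phi_3 = 3(\xi_1+\xi_2)(\xi_2+\xi_3)(\xi_1+\xi_3)$ vanishes on $\xi=\xi_1+\xi_2+\xi_3$. The zero set of $\Phi_3$ forces exactly one of the $\xi_j$'s to equal $\xi$; collecting the three symmetric contributions and using reality of $u$ reduces $J_{\mathrm{res}}(v,v,v)(\xi)$ to a purely diagonal action with imaginary symbol of magnitude $|\wh u(t,\xi)|^2/\xi$. The corresponding ODE is a pure phase rotation --- in particular it preserves $|\wh u(t,\xi)|^2$ at every frequency --- and integrating it reproduces the phase factor in \eqref{def:phase}, modulo replacing $|\wh u(t,\xi)|^2$ by $|\wh{u_0}(\xi)|^2$ at the cost of an $H^{-s+\ga}$ error. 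This error is itself absorbed into the smoother remainder, since it comes only from the Duhamel integral of $J_{\mathrm{nr}}$ and the bilinear correction $T(v,v)$.

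The remainder of the proof is multilinear estimates in $X^{s,b}$-type spaces. The bilinear term $T(v,v)$ is bounded in $X^{-s+\ga,b}$ by $\|v\|_{X^{-s,b}}^2$, gaining a derivative through the $\Phi_2^{-1}$; the non-resonant cubic $J_{\mathrm{nr}}(v,v,v)$ is controlled by a dyadic case analysis on the sizes of $\Phi_2$, $\Phi_3$, and the modulation weights of the three inputs. Iterating these bounds on a local interval of length comparable to a negative power of $\|u_0\|_{H^{-s}}$ yields the factor $\lan{t}^{\al(\de)}$ in \eqref{eq:smoothing}. The Lipschitz statement \eqref{eq:thmlip} is obtained by running the same decomposition for the difference of two solutions, controlling the mismatch between $\mathcal{S}$-operators with different initial data via the pointwise bound $\big||\wh{u_0}(\xi)|^2-|\wh{v_0}(\xi)|^2\big|\lesssim |\wh{u_0}(\xi)-\wh{v_0}(\xi)|(|\wh{u_0}(\xi)|+|\wh{v_0}(\xi)|)$. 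I expect the chief obstacle to be the trilinear estimate on $J_{\mathrm{nr}}$: near the resonance set $\Phi_3=0$, one must carefully balance the gains from $\Phi_2$, $\Phi_3$, and the Bourgain weights so as to preserve $\ga$ arbitrarily close to $1$ uniformly in $s\in[0,1/2)$.
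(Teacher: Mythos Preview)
Your outline captures the first normal form and the resonant/non-resonant cubic splitting correctly, and your identification of the resonant part as a diagonal phase rotation is exactly right. However, there is a genuine gap in the non-resonant estimate that your proposal does not address.

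With only one normal form, the trilinear non-resonant piece $J_{\mathrm{nr}}(v,v,v)$ does \emph{not} gain $\gamma$ close to $1$ uniformly in $s$. In the worst frequency interaction (two high inputs, one low input, with the low input sitting in the slot carrying the $1/\xi_3$), the weight you must bound behaves like $\langle\xi\rangle^{\gamma-s}\langle\xi_1\rangle^{s}\langle\xi_2\rangle^{s}/|\xi_3|^{1-s}$ against a modulation gain of at most $|\Phi_3|^{1/2-}\sim \xi_{\max}^{1-}$, which closes only for $\gamma\lesssim 1-s$. This is precisely the Erdogan--Tzirakis range, where the smoothing degenerates as $s\nearrow 1/2$. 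The paper isolates this obstruction explicitly: after subtracting $R[f]$ and the bilinear correction $h=T(v,v)$, the sole remaining bad term is $\mathcal{NR}(R[f],R[f],R[f])$, and it is removed by a \emph{second} normal form --- a trilinear pseudo-differential operator $J(\cdot,\cdot,\cdot)$ with symbol $\sim 1/(\xi_3\Phi_3)$. Only after this second differentiation-by-parts does every surviving cubic term carry at least one factor known to be smoother (namely $h$, $k=J(R[f],R[f],R[f])$, or the new unknown $w$), and then the trilinear estimates close for any $\gamma<1$. Your proposal anticipates the difficulty (``chief obstacle'') but offers no mechanism to overcome it; the mechanism is this second normal form.

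A secondary point: the global-in-time step is not a routine iteration, because $R[\cdot]$ is nonlinear in its argument and the phase at step $j$ is built from $|\wh{v_{j}}|^2$, not $|\wh{u_0}|^2$. The paper handles this by a telescoping sum $\sum_j\|R[v_j]((M-j)\varepsilon)-R[v_{j-1}]((M-j+1)\varepsilon)\|_{H^\gamma}$ and a mean-value argument on the phase mismatch, using that $|\wh{v_j}|-|\wh{v_{j-1}}|$ is itself controlled by the local smoothing bound. Your sentence ``modulo replacing $|\wh u(t,\xi)|^2$ by $|\wh{u_0}(\xi)|^2$ at the cost of an $H^{-s+\gamma}$ error'' hides this work.
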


It should be noted that although one can take any $\ga<1$ above, the constant~$C$ and the power~$\al(\de)$ blow up as $\de \searrow 0$.  

The effect of $\mathcal{S}$ is a shift of space-time Fourier transform of $u$ by $2|\wh{u_0}|^2(\xi)|/ \xi$.  Note that when $u_0 \in H^{-s}$ for $s<-1/2$ and $\wh{u_0}(0)= 0$, such phase-shift is continuous in because $|\wh{u_0}(\xi)|^2/ \xi$ is a bounded quantity.  This is also shown in Corollary~\ref{trivial}.

If we define $R^*[u_0]$ according to the definition given in \eqref{defr} and statements following, \eqref{eq:smoothing} is equivalent to
\begin{equation}\label{eq:smoothing2}
\n{u(t) - R^*[u_0](t)}{H^{-s+\ga}} \leq C\lan{t}^{\al(\de)}.
\end{equation}
Thus, we will prove \eqref{eq:smoothing2} in place of \eqref{eq:smoothing}.

We derive a fully nonlinear smoothing result without a phase-shift (cf. \cite{niko}).

\begin{corollary}\label{trivial}
Let $0\leq s < 1/2$.  Then the solution $u$ of \eqref{kdvt} with the initial data $u_0\in H^{-s}$ satisfies the following non-linear smoothing property $u(t) - e^{-t\p_x^3} u_0 \in H^{-s+\si}$ for $0\leq \si < 1-2s$.  More precisely, the same $C$, $\al(\de)$ in Theorem~\ref{thm2} satisfies
\[
\n{u(t) - e^{-t\p_x^3} u_0 }{H^{-s+\si}} \leq C \lan{t}^{\al(\de)} 
\]
\end{corollary}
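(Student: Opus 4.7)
The plan is to reduce this to Theorem~\ref{thm2} via the triangle inequality
\[
\n{u(t) - e^{-t\p_x^3} u_0}{H^{-s+\si}} \leq \n{u(t) - R^*[u_0](t)}{H^{-s+\si}} + \n{R^*[u_0](t) - e^{-t\p_x^3} u_0}{H^{-s+\si}}.
\]
Given $0\leq \si < 1-2s$, I would first choose $\de >0$ small enough that $10\de < 1-2s$ and $\si \leq 1-10\de$ both hold, which is possible since $1-\si > 2s \geq 0$. Setting $\ga := 1-10\de$ in Theorem~\ref{thm2} together with the embedding $H^{-s+\ga}\hookrightarrow H^{-s+\si}$ then handles the first term with the same constants $C$ and $\al(\de)$.

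For the second term, I would compute directly on the Fourier side. From \eqref{def:phase} and the equivalence between \eqref{eq:smoothing} and \eqref{eq:smoothing2} noted in the introduction, the difference $R^*[u_0](t) - e^{-t\p_x^3}u_0$ has Fourier transform
\[
\left[\exp\!\left(2i\f{|\wh{u_0}(\xi)|^2}{\xi}t\right) - 1\right]e^{it\xi^3}\wh{u_0}(\xi).
\]
Invoking $|e^{i\theta}-1|\leq |\theta|$ (the assumption $\wh{u_0}(0)=0$ makes the division by $\xi$ harmless) and the trivial pointwise bound $|\wh{u_0}(\xi)|^2\leq \lan{\xi}^{2s}\n{u_0}{H^{-s}}^2$ reduces the squared $H^{-s+\si}$-norm to a constant multiple of $t^2\,\n{u_0}{H^{-s}}^4 \sum_{\xi\neq 0}\lan{\xi}^{2\si+2s-2}|\wh{u_0}(\xi)|^2$. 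The condition $\si \leq 1-2s$ makes the exponent at most $-2s$, so the sum is bounded by $\n{u_0}{H^{-s}}^2$, giving $C t^2\n{u_0}{H^{-s}}^6$ for this term, which is absorbed by $C\lan{t}^{\al(\de)}$.

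No serious obstacle arises: the corollary follows immediately from Theorem~\ref{thm2} combined with an elementary Fourier computation. The conceptual content is simply that the phase-shift operator $\mathcal{S}$ itself contributes $1-2s$ derivatives of smoothing when applied to a linear solution with $H^{-s}$ data, because the factor $|\xi|^{-1}$ from the phase combines with two copies of $|\wh{u_0}|^2 \leq \lan{\xi}^{2s}\n{u_0}{H^{-s}}^2$ to produce exactly this gain at the level of squared norms.
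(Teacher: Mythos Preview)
Your argument is correct and matches the paper's proof in spirit: both use the triangle inequality to split off the Theorem~\ref{thm2} contribution and then handle the remaining phase-shift piece by the elementary bound $|e^{i\theta}-1|\le|\theta|$ on the Fourier side. The only cosmetic difference is that the paper splits at $\mathcal{S}[u](t)$ while you split at $R^*[u_0](t)$; since these differ from $u(t)$ and $e^{-t\p_x^3}u_0$ respectively by the same unitary phase multiplier, the two decompositions are equivalent and the resulting Fourier computations are identical. One small slip: the quantity $Ct^2\n{u_0}{H^{-s}}^6$ you wrote is the bound on the \emph{squared} norm, so the norm itself is $\lesssim |t|\,\n{u_0}{H^{-s}}^3$, which is indeed absorbed by $C\lan{t}^{\al(\de)}$ since $\al(\de)\ge 2$.
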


\begin{proof}
By triangular inequality,
\[
\n{u(t) - e^{-t\p_x^3} u_0}{H^{-s+\si}} \leq \n{u(t) - [\mathcal{S}u](t)}{H^{-s+\si}} + \n{[\mathcal{S}u](t)- u(t)}{H^{-s+\si}}
\]
where $\mathcal{S}$ is defined in \eqref{def:phase}.  Note that the first term on RHS belongs in $C^0_t H_x^{-s+\si}$ for $\si < 1$ by \eqref{eq:smoothing}. The second term is estimated via the following claim.

\textbf{Claim:} Let $u_0 \in H^{-s}$ with mean zero.  Then if $v(t)\in H^{\al}$ for any $t, \al\in \br$, 
\[
\n{\mathcal{S}[v](t) - v(t)}{H_x^{\al + 1-2s}} \lesssim |t|\|u_0\|_{H^{-s}}^2 \|v(t)\|_{H^{\al}}.
\]

To prove the claim, we use mean-value theorem.
\begin{align*}
\n{\mathcal{S}[v](t) - v(t)}{H^{\al + 1-2s}_x} &= \n{\lan{\xi}^{\al + 1-2s}\wh{v}(t,\xi) \left(\exp \left(-2i\frac{|\widehat{u_0}(\xi)|^2}{\xi}  t\right) -1 \right)}{l^2_{\xi}}\\
	&\lesssim \n{\lan{\xi}^{\al+ 1-2s} |\wh{v}|(t,\xi) |t|\frac{|\widehat{u_0}(\xi)|^2}{\xi}}{l^2_{\xi}(\mathbb{Z}\setminus\{0\})}\\
	&\lesssim |t|\|\lan{\xi}^{-2s}|\wh{u_0}|^2\|_{l^{\infty}_{\xi}} \|\lan{\xi}^{\al} |\wh{v}|(t,\xi)\|_{l^{2}_{\xi}} \leq |t|\|u_0\|^2_{H^{-s}} \|v(t)\|_{H^{\al}_x}.
\end{align*}
\end{proof}

The paper is organized as follows. In Section \ref{s2}, we introduce the $X^{s,b}$ spaces and discuss previously obtained results on \eqref{kdvt}.  Section~\ref{s3} contains the proof of Theorem~\ref{thm2} in the following manner: In Section~\ref{sec:setup}, we construct the normal form and perform a few change of variables to reach the new formulation of the equation \eqref{kdvt} to optimize the smoothing effect.  In \ref{sec:properties}, we derive mapping properties of the resonant solution operator~$R[\cdot]$ and the normal form operators~$T(\cdot, \cdot)$ and $J(\cdot, \cdot, \cdot)$.  Section~\ref{sec:nonlinearity} contains the main  multilinear estimates necessary for the contraction mapping, and Section~\ref{sec:local} contains the proof of the local statement for Theorem~\ref{thm2}. In Section \ref{global}, we conclude the proof of the theorem by iterating the local steps and using the global-in-time bound obtained in \cite{I1}.\\

{\bf Acknowledgement:} The author thanks Atanas Stefanov and Vladimir Georgiev for helpful suggestions, and Nikolaos Tzirakis for pointing out some key references.

\section{Notations and preliminaries}\label{s2}
\subsection{Notations}\label{s21}
We adopt the standard notations in approximate inequalities as follows:
By $A \lesssim B$, we mean that there exists an absolute constant $C>0$ with $A \leq B$.  $A \ll B$ means that the implicit constant is taken to be a \emph{sufficiently} large positive number.  For any number of quantities $\alpha_1, \ldots, \alpha_k$, $A\lesssim_{\alpha_1, \ldots, \alpha_k} B$ means that the implicit constant depends only on $\alpha_1, \ldots, \alpha_k$.
Finally, by $A\sim B$, we mean $A\lesssim B$ and $B\lesssim A$.

We indicate by $\eta$ a smooth time cut-off function which is supported on $[-2,2]$ and equals $1$ on $[-1,1]$.  Notations here will be relaxed, since the exact expression of $\eta$ will not influence the outcome.  For any normed space~$\mathcal{Y}$, we denote the quantity $\|\cdot\|_{\mathcal{Y}_T}$ by the expression $\|u\|_{\mathcal{Y}_T} = \|\eta(t/T)u \|_{\mathcal{Y}}$.\\

The spatial, space-time Fourier transforms and spatial inverse Fourier transform are
\begin{align*}
 \widehat{f}(\xi) &= \int_{\mathbf{T}} f(x) e^{-i x\xi}\, dx, \\
 \widetilde{u}(\tau,\xi) &= \int_{ \mathbf{T}\times\mathbf{R}} u(t,x) e^{-i(x\xi+ t\tau)} \,dx\, dt\\
 \mathcal{F}_{\xi}^{-1}[a_{\xi}](x) &= \frac{1}{2\pi} \sum_{\xi \in \mathbf{Z}} a_{\xi} e^{i\xi x}
\end{align*}
where $\xi \in \mathbf{Z}$.  
If $u$ is real-valued, then by above definition $\widehat{u}(-\xi) = \overline{\widehat{u}} (\xi)$.  This is in fact an essential ingredient in our proof, which makes the resonant term autonomous in time.

For a reasonable expression~$\sigma$, we denote differential operators with symbol $\sigma (\cdot)$ via $\sigma (\nabla) f = \sigma(\p_x) f := \mathcal{F}_{\xi}^{-1} [ \sigma(i\xi) \widehat{f}(\xi)]$.  Also, we define $\langle \xi \rangle := 1+|\xi|$.\\ 

\subsection{$X^{s,b}$ spaces and  and local-wellposedness theory.}
 \label{s22}
Bourgain spaces are constructed as the completion of \emph{smooth} functions with with respect to the norm
$$
\|u\|_{X^{s,b}} := \left( \sum_{\xi\in \mathbf{Z}} (1+ |\xi|)^{2s} (1+ |\tau-\xi^3|)^{2b} |\widetilde{u}(\tau,\xi)|^2 \, d\tau \right)^{\frac{1}{2}}.
$$

 The expression above shows $X^{s,0} = L^2_t H^s_x$.  The added weight~$\tau-\xi^3$ (called \emph{modulation frequency}) measures the distance between a space-time Fourier support of the function and the Fourier support of the linear solution~$e^{-t\p_x^3}u_0$.  For instance, the free Airy solution with $L^2$~initial data lies in this space, given an appropriate time cut-off~$\eta \in \mathcal{S}_t$.
\begin{equation}\label{airyfree}
\| \eta (t) e^{-t\p_x^3}f\|_{X^{0,b}} = \|(1+| \tau-\xi^3|)^{b} \widehat{\eta}(\tau-\xi^3) \widehat{f}(\xi)\|_{L^2_{\tau} l^2_{\xi}} \lesssim_{\eta, b} \|f\|_{L^2_x}
\end{equation} 
due to the fast decay of $\widehat{\eta} \in \mathcal{S}_t$.  For $\varepsilon, \delta>0$, we have the embedding properties obtained in Bourgain \cite{Bour},
\begin{align}
\|u\|_{C^0_t H^s_x (\mathbf{R}\times \mathbf{T})} &\lesssim_{\delta} \|u\|_{X^{s,\frac{1}{2}+\delta}},\label{xsb1}\\
\|u\|_{L^4_{t,x}([0,1]\times \mathbf{T})} &\lesssim_{\varepsilon, \delta} \|u\|_{X^{\varepsilon,\frac{1}{3}}},\label{xsb3}\\
\|u\|_{L^6_{t,x}([0,1]\times \mathbf{T})} &\lesssim_{\varepsilon, \delta} \|u\|_{X^{\varepsilon,\frac{1}{2}+\delta}}.\label{xsb2}
\end{align}

The following two estimates provide a convenient framework in our proof.  The proofs in \cite[Prop. 2.12, Lemma 2.11]{tao2}, which argues for $x\in \mathbf{R}^d$  are also valid for the periodic case.
\begin{proposition}\label{xsb}
For $\de>0$ and $s\in \br$,
\[
\|\eta (t) \int_{0}^t e^{-(t-s)\p_x^3} F(s) \, ds\|_{X^{s,\frac{1}{2}+\delta}} \lesssim_{\eta,\delta} \|F\|_{X^{s,-\frac{1}{2}+\delta}}.
\]
\end{proposition}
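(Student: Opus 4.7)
The plan is to establish this standard Duhamel estimate by direct Fourier analysis, one spatial frequency $\xi$ at a time. First I would take the spatial Fourier transform and peel off the Airy oscillation by setting $k_\xi(s) := e^{-is\xi^3}\wh{F}(s,\xi)$, so that
\[
\wh{u}(t,\xi) \;=\; \eta(t)\, e^{it\xi^3}\int_0^t k_\xi(s)\, ds.
\]
The key bookkeeping observation is $\wh{k_\xi}(\lambda) = \wt{F}(\lambda + \xi^3, \xi)$: if $\sigma := \tau - \xi^3$ is the modulation frequency appearing in the output $X^{s,1/2+\de}$ norm, then $\lambda$ plays the identical role on the input side. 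Using the representation
\[
\int_0^t k_\xi(s)\, ds \;=\; \f{1}{2\pi}\int_{\br} \f{e^{it\lambda}-1}{i\lambda}\, \wh{k_\xi}(\lambda)\, d\lambda
\]
and Fourier-transforming in $t$ (the outer $e^{it\xi^3}$ is absorbed into the shift $\sigma = \tau - \xi^3$), the proposition reduces to a one-dimensional estimate in the modulation variables, separately for each $\xi$.

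To carry it out, I would split the $\lambda$-integral into $\{|\lambda|>1\}$ and $\{|\lambda|\leq 1\}$. On the high-modulation piece, I decompose $\f{e^{it\lambda}-1}{i\lambda} = \f{e^{it\lambda}}{i\lambda} - \f{1}{i\lambda}$; after multiplying by $\eta(t)$ and taking $\cf_t$, this produces kernels proportional to $\wh{\eta}(\sigma-\lambda)/\lambda$ and $\wh{\eta}(\sigma)/\lambda$. The factor $|\lambda|^{-1}$ on $\{|\lambda|>1\}$ combines with $\lan{\lambda}^{1/2+\de}$ (coming from $\lan{\sigma}^{1/2+\de} \lesssim \lan{\sigma-\lambda}^{1/2+\de}\lan{\lambda}^{1/2+\de}$) to produce $\lan{\lambda}^{-1/2+\de}$, matching the weight of $\|F\|_{X^{s,-1/2+\de}}$ exactly. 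Young's inequality in $\sigma$ against the Schwartz decay of $\wh{\eta}$, then $\ell^2_\xi$ summation with weight $\lan{\xi}^{2s}$, closes this piece. On the low-modulation piece $\{|\lambda|\leq 1\}$, Taylor expansion gives
\[
\eta(t)\,\f{e^{it\lambda}-1}{i\lambda} \;=\; \sum_{k\geq 1} \f{i^{k-1}}{k!}\,\lambda^{k-1}\,\bigl[\eta(t)\, t^k\bigr],
\]
an absolutely convergent series in which each $\eta_k(t) := \eta(t) t^k$ is Schwartz; a Cauchy--Schwarz bound on the $\lambda$-integral (the weight $\lan{\lambda}\sim 1$ is harmless) and the rapid decay of each $\wh{\eta_k}$ dispose of this contribution.

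The main obstacle I anticipate is careful bookkeeping of the output modulation weight $\lan{\sigma}^{1/2+\de}$: it must be distributed between the Schwartz decay of $\wh{\eta}$ and a factor $\lan{\lambda}^{1/2+\de}$ so that the residual exponent on $\lan{\lambda}$ lands exactly at $-1/2+\de$. Since $\eta\in\mathcal{S}_t$, any polynomial decay of $\wh{\eta}$ faster than $\lan{\cdot}^{-(3/2+\de)}$ is more than sufficient. Because the entire argument is diagonal in $\xi$ and relies only on the Schwartz nature of $\eta$ and one-variable Young/Cauchy--Schwarz, the $\br^d$ proof in \cite{tao2} transfers verbatim to the periodic setting.
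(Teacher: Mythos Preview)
Your proposal is correct and reproduces essentially the standard argument (split the $\lambda$-integral at $|\lambda|=1$, use $\frac{e^{it\lambda}-1}{i\lambda}=\frac{e^{it\lambda}}{i\lambda}-\frac{1}{i\lambda}$ and Peetre's inequality on the high piece, Taylor-expand on the low piece) that appears in \cite[Prop.~2.12]{tao2}; the paper does not give its own proof but simply cites that reference and remarks that the argument there, written for $x\in\mathbf{R}^d$, carries over verbatim to the periodic setting since the estimate is diagonal in $\xi$.
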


\begin{proposition} \label{timeloc}
Let $\eta \in \mathcal{S}_t (\mathbf{R})$ and $T\in (0,1)$.  Then for $-\frac{1}{2}< b' \leq b <\frac{1}{2}$, $s\in \mathbf{R}$,
\[
\|\eta(t/T) u \|_{X^{s,b'}} \lesssim_{\eta, b,b'} T^{b-b'} \|u\|_{X^{s,b}}.
\]
\end{proposition}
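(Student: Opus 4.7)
The plan is to reduce Proposition~\ref{timeloc} to a scalar Sobolev estimate in the time variable, in which the spatial variable plays no role, and then invoke the standard argument of \cite{tao2} for the scalar estimate.

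First, since $\eta(t/T)$ depends only on $t$, it commutes with the spatial Fourier multiplier $\lan{\p_x}^s$; applying this multiplier to both sides reduces the claim to the case $s=0$.

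Second, I would remove the Airy phase by conjugation. For each fixed $\xi\in\mathbf{Z}$ set $g_\xi(t):=e^{-it\xi^3}\wh{u}(t,\xi)$, so that $\wt{u}(\tau,\xi)=\wh{g_\xi}(\tau-\xi^3)$ and a change of variable in $\tau$ gives
\[
\|u\|_{X^{0,b}}^2 \;=\; \sum_{\xi\in\mathbf{Z}} \|g_\xi\|_{H^b_t(\mathbf{R})}^2.
\]
Since $\eta(t/T)$ multiplies $\wh{u}(t,\xi)$ pointwise in $t$ and commutes with the scalar phase $e^{it\xi^3}$, the operator $u\mapsto \eta(t/T)u$ acts at each $\xi$ as $g_\xi\mapsto \eta(t/T)\,g_\xi$. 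The proposition therefore reduces, after summing in $\xi$, to the $\xi$-uniform one-variable estimate
\[
\|\eta(t/T)\,g\|_{H^{b'}(\mathbf{R})} \;\lesssim_{\eta,b,b'}\; T^{b-b'}\,\|g\|_{H^{b}(\mathbf{R})}
\qquad \text{for every } g\in H^{b}(\mathbf{R}).
\]

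Third, I would prove this scalar estimate as in \cite{tao2}. For $b'=0$ and $0\leq b<1/2$, combine Sobolev embedding $H^b(\mathbf{R})\hookrightarrow L^{2/(1-2b)}(\mathbf{R})$ with H\"older's inequality applied to $\|\eta(\cdot/T)\|_{L^{1/b}}=T^b\,\|\eta\|_{L^{1/b}}$. For the diagonal case $b'=b$ with $|b|<1/2$, prove the uniform-in-$T$ multiplier bound $\|\eta(\cdot/T)\,g\|_{H^b}\lesssim \|g\|_{H^b}$ via a Littlewood--Paley decomposition that splits time frequencies $|\tau|\lesssim 1/T$ from $|\tau|\gtrsim 1/T$, using the rapid decay of $\wh{\eta}$ and the condition $b<1/2$ to close the resulting sums. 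Interpolating these two endpoints fills in $0\leq b'\leq b<1/2$. The remaining cases with $b'<0$ are obtained by duality: the pairing $\langle \eta(t/T)h,\,k\rangle = \langle h,\,\eta(t/T)k\rangle$ shows that the estimate for $(b,b')$ is equivalent to the one for $(-b',-b)$, so the range $b'\leq b\leq 0$ is dual to the already handled range $0\leq -b\leq -b'$; the mixed case $b'<0<b$ then follows by chaining the two types of estimate through $b'=0$.

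The main obstacle is the constraint $|b|,|b'|<1/2$, which is used both in the Sobolev embedding step (the exponent $2/(1-2b)$ is infinite at $b=1/2$) and in the scaling $\|\eta(\cdot/T)\|_{H^\alpha_t}\sim T^{1/2-\alpha}$ underlying the multiplier bound (which requires $-1/2<\alpha<1/2$ for convergence of the relevant integrals); at either endpoint the argument picks up logarithmic divergences. Everything else amounts to careful bookkeeping of powers of $T$.
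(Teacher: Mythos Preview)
Your proposal is correct and follows exactly the approach the paper indicates: the paper does not give its own proof but simply cites \cite[Prop.~2.12, Lemma~2.11]{tao2}, noting that the argument for $x\in\mathbf{R}^d$ carries over to the periodic setting. Your reduction to the scalar time-Sobolev estimate via conjugation by the Airy phase, followed by the Sobolev--H\"older endpoint, the uniform multiplier bound, interpolation, and duality, is precisely the content of those references.
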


We use Proposition~\ref{timeloc} to gain positive powers of $T$ by yielding a small portion of modulation weight~$\lan{\tau-\xi^3}$.  

Mean-zero assumption of the initial data is standard through literature for the periodic KdV equation.  This is justified by the mean conservation property of \eqref{kdvt}.  By a change of variable $v(t)= u(t)-\int_{\bt}u_0$ in \cite{Bour}, one can transfer the mean-zero condition to a order 1 perturbation, which can then be removed by gauge transform $t':=t-cx$.  Thus we assume that all initial data~$u_0$ satisfies the mean-zero condition.

We define a closed subspace $Y^{s,b}$ of $X^{s,b}$ (with the same norm) as the image of orthogonal projection $\mathbf{P}: X^{s,b} \to Y^{s,b}$ defined by $\displaystyle \mathbf{P} (u) (x) := u(x) - \int_{\mathbf{T}} u\, dx$.  This will enforce the mean-zero condition on the contraction space.

The analysis of periodic KdV has developed around $X^{-s,\f{1}{2}}$ space, rather than $X^{s,\f{1}{2}+}$ which is properly contained in $C^0_t H^s_x$.  This is mainly due to the failure of $X^{s,b}$ bilinear estimate for $b>1/2$, \cite{KPV1}.  This was remedied in \cite{I1} by introducing $Y^{-s,\f{1}{2}}\cap \wt{l^2_{\xi}(\lan{\xi}^{-s}; L^1_{\tau})}$ where
\[
\|u\|_{\wt{l^2_{\xi}(\lan{\xi}^{-s}; L^1_{\tau})}} := \n{\lan{\xi}^{-s} \wt{u}(\tau,\xi)}{l^2_{\xi} L^1_{\tau}}.
\]
Clearly, $Y^{-s,\f{1}{2}+} \subset \wt{l^2_{\xi}(\lan{\xi}^{-s}; L^1_{\tau})} \subset C^0_t H^{-s}$.  Use of the space $Y^{-s,\f{1}{2}}\cap \wt{l^2_{\xi}(\lan{\xi}^{-s}; L^1_{\tau})}$ increases the number of estimates required for contraction, thus one would prefer to show contraction in $Y^{-s,\f{1}{2}+}$.  In this paper, we do not encounter the same problem described in \cite{KPV1}, since our estimates will be essentially trilinear.  However, we will take advantage of the fact that the global-in-time solution lives in $Y^{-s,\f{1}{2}}\cap \wt{l^2_{\xi}(\lan{\xi}^{-s}; L^1_{\tau})}$.  The following statement is obtained from \cite{I1}.

\begin{proposition}\label{pro:timebound}
The PBIVP is globally well-posed in $H^{-s}$ when $s\in (0,1/2]$.  Furthermore, the solution~$\eta(t/T)u \in Y^{-s,\f{1}{2}}\cap \wt{l^2_{\xi}(\lan{\xi}^{-s}; L^1_{\tau})}$ for any arbitrary time $T$ where $\eta$ is a smooth cut-off function, and for any $T>0$,
\[
\|u(T)\|_{H^{-s}} \lesssim 
\|\eta(\cdot/T) u(\cdot)\|_{\wt{l^2_{\xi} (\lan{\xi}^{-s}; L^1_{\tau})}} \lesssim_{\eta} \lan{T}^s \|u_0\|_{H^{-s}}.
\]
\end{proposition}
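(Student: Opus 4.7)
Since this proposition is a quantitative restatement of the global theory of Colliander, Keel, Staffilani, Takaoka, and Tao \cite{I1}, the plan is to recall that $I$-method argument and to extract the polynomial growth constant. The argument splits into three pieces: a local contraction in the resolution space, an almost-conserved modified energy, and an induction on the number of local steps in which the frequency threshold~$N$ is tuned against the target time~$T$.

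For the local step, the bilinear estimate of \cite{I1} provides a contraction mapping for \eqref{kdvt} in $Y^{-s,1/2}\cap \wt{l^2_{\xi}(\lan{\xi}^{-s}; L^1_{\tau})}$ on an interval $[0,\de]$ with $\de=\de(\|u_0\|_{H^{-s}})$; the $\wt{l^2}$ component is precisely what repairs the endpoint failure of the $X^{s,b}$ bilinear estimate at $s=1/2$. This yields on a single local step the bound $\n{\eta(\cdot/\de) u}{\wt{l^2_{\xi}(\lan{\xi}^{-s}; L^1_{\tau})}} \lesssim \|u_0\|_{H^{-s}}$, and the embedding $\wt{l^2_{\xi}(\lan{\xi}^{-s}; L^1_{\tau})}\subset C^0_t H^{-s}$ already recorded in the paper gives the first inequality of the proposition.

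To iterate to arbitrary time, I would introduce the $I$-operator $I_N$, the smooth Fourier multiplier with symbol $m_N(\xi)=1$ for $|\xi|\leq N$ and $m_N(\xi)=(N/|\xi|)^{s}$ for $|\xi|\gg N$, which satisfies $\|u\|_{H^{-s}} \leq \|I_N u\|_{L^2} \lesssim N^s \|u\|_{H^{-s}}$. Applying $I_N$ to \eqref{kdvt}, the function $I_N u$ solves
$$\p_t(I_N u) + \p_x^3(I_N u) = \p_x\big[(I_N u)^2\big] + \p_x\big[I_N(u^2) - (I_N u)^2\big],$$
so the modified energy $E_N(t):=\n{I_N u(t)}{L^2}^2$ evolves only through the commutator, which vanishes for frequencies $|\xi|\leq N$. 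The core almost-conservation estimate takes the form
$$\big|E_N(\de) - E_N(0)\big| \lesssim N^{-\al}\,P\big(\n{I_N u}{Y_{\de}^{0,1/2}\cap \wt{l^2_{\xi}(L^1_{\tau})}}\big)$$
for some $\al>0$ and polynomial $P$, and is proved by writing the time derivative of $E_N$ as a quadrilinear space-time Fourier integral and exploiting the resonance identity $\xi_1^3+\xi_2^3+\xi_3^3 = 3\xi_1\xi_2\xi_3$ on $\xi_1+\xi_2+\xi_3=0$ to absorb the commutator symbol into the modulation weight.

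Iterating the local step on $[0,T]$ in $\sim T/\de$ pieces and choosing $N=N(T)$ so that the cumulative loss in $E_N$ stays comparable to $E_N(0)$, the induction $\|I_N u(k\de)\|_{L^2}^2 \leq 2\|I_N u_0\|_{L^2}^2$ closes and, via $\|u\|_{H^{-s}}\leq \|I_N u\|_{L^2}\lesssim N^s\|u\|_{H^{-s}}$, produces the polynomial bound $\|u(T)\|_{H^{-s}}\lesssim N(T)^s\|u_0\|_{H^{-s}}$; optimizing the parameters as in \cite{I1} gives exactly the exponent $\lan{T}^s$ claimed in the proposition, after which the $\wt{l^2}$-bounds from each local piece are summed by the subadditivity of $L^1_{\tau}$. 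The main technical obstacle is the sharp almost-conservation estimate itself: extracting the correct negative power of $N$ from a barely $\wt{l^2}$-controlled multilinear expression at the endpoint $s=1/2$ requires careful balancing of the commutator symbol against the modulation weight, whereas the remaining ingredients (the bilinear local estimate, the iteration bookkeeping, and the operator comparison) are routine.
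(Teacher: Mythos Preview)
The paper does not supply its own proof of this proposition; it simply records the statement as ``obtained from \cite{I1}.'' Your proposal correctly identifies the source and sketches the $I$-method argument from that reference---local contraction in the $Y^{-s,1/2}\cap \wt{l^2_{\xi}(\lan{\xi}^{-s};L^1_{\tau})}$ space, the modified energy $\|I_N u\|_{L^2}^2$, almost conservation via the commutator, and iteration with $N$ tuned to $T$---so your approach is exactly what the paper is invoking by citation.

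One small caution: your final sentence about recovering the global $\wt{l^2_{\xi}(\lan{\xi}^{-s};L^1_{\tau})}$ bound ``by the subadditivity of $L^1_{\tau}$'' across local pieces is not quite how this works. The $L^1_{\tau}$ norm is not subadditive under time-interval decomposition (Fourier transform in $t$ mixes the pieces), so one does not literally sum local-in-time contributions in that norm. Rather, once the $H^{-s}$ norm is controlled uniformly on $[0,T]$ by the $I$-method, one reruns the local bilinear contraction on a single interval of length $\sim T$ (after rescaling, or with the cutoff $\eta(\cdot/T)$ inserted) to obtain the resolution-space bound directly; this is the standard way the $\wt{l^2}$ control is extracted in \cite{I1}. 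Apart from this bookkeeping point, your outline is faithful to the cited argument.
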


\section{Proof of Theorem \ref{thm2}}
\label{s3}

\subsection{Setting of the problem}
\label{sec:setup}
We now turn our attention to \eqref{kdvt}.  Let $u$ be the global-in-time solution of \eqref{kdvt} described in Proposition~\ref{pro:timebound}.  Setting $v := \langle \nabla \rangle^{-s} u$, $v$ satisfies 
\begin{equation}\label{periodic}
v_t + v_{xxx} = \mathcal{N} (v,v), \qquad v(0) = f := \lan{\na}^{-s} u_0 \in L^2(\mathbf{T})
\end{equation}
where $\mathcal{N} (u,v) := \p_x \langle \nabla \rangle^{-s} [\langle \nabla \rangle^s u \langle \nabla \rangle^s v ]$.  In particular, the bilinear operator~$\mathcal{N}$ contains a spatial derivative, thus $\mathcal{N} \equiv \mathbf{P} \circ \mathcal{N}$ itself has mean-zero.

We construct the bilinear pseudo-differential operator $T$ by the formula
$$
T (u,v) := -\f{1}{3}\sum_{\xi_1 \xi_2 (\xi_1+\xi_2) \neq 0} \frac{\langle\xi_1\rangle^s \langle\xi_2\rangle^s}{\langle\xi_1 + \xi_2\rangle^s} \frac{1}{\xi_1 \xi_2} \widehat{u}(\xi_1) \widehat{v} (\xi_2) e^{i(\xi_1 + \xi_2) x}.
$$

Considering the algebraic identity
\[
(\tau_1 +\tau_2) - (\xi_1 + \xi_2)^3 = (\tau_1 - \xi_1^3) + (\tau_2 - \xi_2^3) -3\xi_1 \xi_1(\xi_1 + \xi),
\]
the differential operator $\p_t - \p_{xxx}$ acts on $T$ in the following manner:

$$
(\p_t + \p_{xxx}) T (u,v) = T ((\p_t + \p_{xxx}) u,v) + T(u,(\p_t+ \p_{xxx}) v) + \mathcal{N} (\mathbf{P}u,\mathbf{P}v).
$$
If we write $h= T (v,v)$ where $v$ solves \eqref{periodic} (recall $v=\mathbf{P}v$) and change variable by $v= h+z$, then $z$ satisfies
\begin{equation}\label{zperiodic}
\left| \begin{array}{l} (\p_t + \p_{xxx}) z = -2 T (\mathcal{N}(v,v), v);\\
	z(0) = f-T(f,f). \end{array}\right.
\end{equation}

For the right side of \eqref{zperiodic}, we note that 
$$
T(\mathcal{N}(v,v),v) = \mathbf{P}\langle \nabla \rangle^{-s} (\mathbf{P}[\langle \nabla \rangle^s v \langle \nabla \rangle^s v] \frac{\langle \nabla \rangle^{s}}{\nabla} v).
$$
  
We adapt the computations in \cite{NTT2} to simplify Fourier coefficients of the above expression as follows.  For $\xi\neq 0$ (recall $\widehat{v}(0)=0$ and $\widehat{v}(-\xi) = \overline{\widehat{v}}(\xi)$),

\begin{align*} 
\mathcal{F}[\mathbf{P}\langle \nabla \rangle^{-s} (\mathbf{P}[\langle \nabla \rangle^s v \langle \nabla \rangle^s v] \frac{\langle \nabla \rangle^{s}}{\nabla} v)](\xi) &= \sum_{\tiny \begin{array}{c}\xi_1+\xi_2\neq 0, \quad\xi_j \neq 0\\ \xi_1+\xi_2+\xi_3 = \xi\end{array}} \frac{\langle \xi_1\rangle^s \langle\xi_2\rangle^s \langle \xi_3\rangle^{s}}{i\xi_3\langle \xi\rangle^s} \widehat{v}(\xi_1) \widehat{v}(\xi_2) \widehat{v}(\xi_3)\\
	&\hspace{-150pt}= \sum_{\tiny \begin{array}{c}(\xi_1+\xi_2)(\xi_2+\xi_3)(\xi_3+\xi_1)\neq 0\\ \xi_1+\xi_2+\xi_3=\xi, \quad\xi_j \neq 0 \end{array} } \frac{\langle \xi_1\rangle^s \langle\xi_2\rangle^s \langle \xi_3\rangle^{s}}{i\xi_3\langle \xi\rangle^s} \widehat{v}(\xi_1) \widehat{v}(\xi_2) \widehat{v}(\xi_3) + \frac{\langle \xi\rangle^{2s}}{-i\xi} \widehat{v}(\xi) \widehat{v}(\xi) \widehat{v}(-\xi)\\
	&\hspace{-105pt}+ \sum_{\xi_3\neq 0} \frac{\langle\xi\rangle^s \langle \xi_3\rangle^{2s}\langle \xi\rangle^s}{i\xi_3\langle \xi\rangle^s} \widehat{v}(-\xi_3) \widehat{v}(\xi) \widehat{v}(\xi_3)+ \sum_{\xi_3\neq 0} \frac{\langle \xi\rangle^s \langle\xi_3\rangle^{2s}}{i\xi_3\langle \xi\rangle^s} \widehat{v}(\xi) \widehat{v}(-\xi_3) \widehat{v}(\xi_3)\\
	&\hspace{-150pt}= \sum_{\tiny \begin{array}{c}(\xi_1+\xi_2)(\xi_2+\xi_3)(\xi_3+\xi_1)\neq 0\\ \xi_1+\xi_2+\xi_3=\xi, \quad\xi_j \neq 0 \end{array} } \frac{\langle \xi_1\rangle^s \langle\xi_2\rangle^s \langle \xi_3\rangle^{s}}{i\xi_3\langle \xi\rangle^s} \widehat{v}(\xi_1) \widehat{v}(\xi_2) \widehat{v}(\xi_3) - \frac{\langle \xi\rangle^{2s}}{i\xi} |\widehat{v}|^2(\xi) \widehat{v}(\xi).
\end{align*}

The first term on the right side of above is the \emph{non-resonant} term, denoted $\mathcal{NR}(v,v,v)(\xi)$.  The second one is \emph{resonant} and is denoted $\mathcal{R}(v,v,v)(\xi)$.  Then we can rewrite \eqref{zperiodic} as 
$$
(\p_t + \p_{xxx}) z = -2 \mathcal{F}_{\xi}^{-1} \left[\mathcal{NR}(v,v,v)\right] + 2\mathcal{F}_{\xi}^{-1} \left[\mathcal{R}(v,v,v)\right].
$$

When $s$ is near $1/2$, the resonant term does not gain any derivatives since the oscillatory gain $(\xi_1 + \xi_2)(\xi_2+\xi_3)(\xi_3+\xi_1)$ is not available.  This is why the gain of derivatives vanish for $u-e^{-t\p_x^3}u_0$ in \cite{niko}.  To overcome this problem, we want to efficiently \emph{filter out} the roughest term.   To this end, we find an explicit solution for the IVP
$$
\left| \begin{array}{l}(\p_t+\p_{xxx})v_* = -2 \sum_{\xi \neq 0} \frac{\langle \xi \rangle^{2s}}{i\xi} |\widehat{v_*}(\xi)|^2 \widehat{v_*} (\xi) e^{i\xi x} \\
	v_*(0) = f \in L^2(\mathbf{T}). \end{array} \right.
$$

The solution to the IVP can be written explicitly as
\begin{equation}\label{defr}
R [f] (t,x) := \sum_{\xi \neq 0} \widehat{f}(\xi) e^{2i\frac{\langle \xi \rangle^{2s}}{\xi} |\widehat{f}(\xi)|^2 t} e^{i(\xi x + \xi^3 t)}.
\end{equation}

$R^*[u_0]$ in the statement of Theorem~\ref{thm2} corresponds to $\langle \nabla \rangle^s R[\langle \nabla \rangle^{-s} u_0]$.  For the method of such constructions, refer to \cite[Exercise 4.21]{tao2}.  Clearly, $R[f]$ is a unitary (non-linear) operator on $H^s$ for any $s\in \br$ and maps $H^s_x \to C^{0}_t H^s_x$.  

We perform another change of variable~$z = R[f] + y$ to filter out the roughest resonant term.  Then $y$ satisfies
\begin{align}\label{eqyper} 
(\p_t + \p_{xxx}) y &= -2\mathcal{F}_{\xi}^{-1} [\mathcal{NR}(R[f]+h+y,R[f]+h+y,R[f]+h+y)]\\
 &+2 \sum_{\xi\neq 0} \frac{\langle \xi \rangle^{2s}}{i\xi} B(\widehat{R[f]}(\xi), \widehat{h}(\xi), \widehat{y}(\xi)) e^{i\xi x}\notag 
 \end{align}
with the initial condition~$y(0) = -T(f,f)$, where $B(\al,\be,\ga):= |\al+\be+\ga|^2(\be+\ga) + \al|\be+\ga|^2+ \al^2 (\overline{\be+\ga}) + |\al|^2 (\be+\ga)$ for $\al,\be,\ga \in \mathbf{C}$.  The precise form of the polynomial~$B$ is not important. The main idea is that the Fourier coefficient~$B(\widehat{R[f]}(\xi), \widehat{h}(\xi), \widehat{y}(\xi))$ does not contain $|\wh{R[f]}(\xi)|^2 \wh{R[f]}(\xi)$. This is important heuristically since $R[f]$ is the least smooth term among the three.  Since at least one smooth term is present at any cubic resonant term, the cubic resonant term now should be as smooth as $y$ and $h$.

Now consider the non-resonant component in \eqref{eqyper}.  If $h$ and $y$ are smoother than $R[f]$, then the most problematic term should be $\mathcal{NR}(R[f],R[f],R[f])$.  Indeed, this is the only non-linearity in \eqref{eqyper} which obstructs the full derivative gain.  It can be shown that $\mathcal{NR}(R[f],R[f],R[f])$ only gains $1-s$~derivatives.  Filtering out this term by means of normal form will be enough to improve this to a full derivative gain.  We want to solve
\[
(\p_t + \p_{xxx}) y_* = 2\cf^{-1}_{\xi}[\mathcal{NR}(R[f],R[f],R[f])].
\]
We define the trilinear pseudo-differential operator~$J$ by $J(u,v,w)(x) := $
\[
-\f{2}{3}\sum_{\tiny \begin{array}{c} (\xi_1 + \xi_2)(\xi_2+ \xi_3)(\xi_3+\xi_1)\neq 0\\ \xi_1 + \xi_2 + \xi_3 = \xi, \qquad \xi_j \neq 0\end{array}} \f{\lan{\xi_1}^s \lan{\xi_2}^s \lan{\xi_3}^s}{i\xi_3 \lan{\xi}^s (\xi_1 + \xi_2)(\xi_2+ \xi_3)(\xi_3+\xi_1)} \wh{u}(\xi_1) \wh{v}(\xi_2) \wh{w}(\xi_3) e^{i\xi x}.
\]
Note that $J(\cdot,\cdot,\cdot)$ is symmetric in the first two variables.  Also, considering the algebraic identity 
\begin{equation}\label{eq:cubic}
(\tau_1 + \tau_2 +\tau_3) - (\xi_1+\xi_2+\xi_3)^3 = \sum_{j=1}^3 (\tau_j - \xi_j^3) -3(\xi_1 + \xi_2)(\xi_2+ \xi_3)(\xi_3+\xi_1),
\end{equation}
we can see that the Airy operator acts on $J$ as follows.
\begin{align*}
(\p_t + \p_{x}^3)J(u,v,w) =\, &J((\p_t + \p_x^3)u,v,w) + J(u,(\p_t + \p_x^3)v,w)+J(u,v,(\p_t + \p_x^3)w)\\
	 &-2 \cf^{-1}_{\xi}[\mathcal{NR}(u,v,w)].
\end{align*}

If we let $k := J(R[f], R[f], R[f])$, then by above
\begin{align*}
(\p_t + \p_{x}^3)k &=   2\cf^{-1}_{\xi}[\mathcal{NR}(u,v,w)]+ 2J(-2\cf^{-1}_{\xi}[\mathcal{R}(R[f],R[f],R[f])],R[f],R[f])\\
  &\quad + J(R[f],R[f], -2\cf^{-1}_{\xi}[\mathcal{R}(R[f],R[f],R[f])]).	
\end{align*}
with initial data $k(0) = J(f,f,f)$.  We write the Fourier coefficients of the quintilinear terms above.  For $\xi\neq 0$,
$\cf\left[J(\cf^{-1}_{\xi}[\mathcal{R}(R[f],R[f],R[f])],R[f],R[f])\right] (\xi) = $
\[
\sum_{\tiny \begin{array}{c} (\xi_1 + \xi_2)(\xi_2+ \xi_3)(\xi_3+\xi_1)\neq 0\\ \xi_1 + \xi_2 + \xi_3 = \xi, \qquad \xi_j \neq 0\end{array}} \f{\lan{\xi_1}^{3s} \lan{\xi_2}^s \lan{\xi_3}^s |\wh{f}(\xi_1)|^2}{-\xi_1 \xi_3  \lan{\xi}^s (\xi_1 + \xi_2)(\xi_2+ \xi_3)(\xi_3+\xi_1)} \wh{R[f]}(\xi_1) \wh{R[f]}(\xi_2) \wh{R[f]}(\xi_3).
\]
Similarly, $\cf\left[J(\cf^{-1}_{\xi}[\mathcal{R}(R[f],R[f],R[f])],R[f],R[f])\right](\xi)=$
\[
\sum_{\tiny \begin{array}{c} (\xi_1 + \xi_2)(\xi_2+ \xi_3)(\xi_3+\xi_1)\neq 0\\ \xi_1 + \xi_2 + \xi_3 = \xi, \qquad \xi_j \neq 0\end{array}} \f{\lan{\xi_1}^{s} \lan{\xi_2}^s \lan{\xi_3}^{3s} |\wh{f}(\xi_3)|^2}{-\xi_3^2  \lan{\xi}^s (\xi_1 + \xi_2)(\xi_2+ \xi_3)(\xi_3+\xi_1)} \wh{R[f]}(\xi_1) \wh{R[f]}(\xi_2) \wh{R[f]}(\xi_3).
\]
Note that above expressions are not genuinely quintilinear since there are only three different space-time Fourier modes present.  In this sense, estimates for these terms are essentially trilinear.

Writing $y = k + w$, we obtain our final equation
\begin{equation}\label{eqwper}
\left| \begin{array}{l} w_t + w_{xxx} = \mathcal{NR} + \mathcal{R} + \mathcal{Q}\\
w(0) = -T(f,f) - J(f,f,f)\end{array}\right.
\end{equation} 
where $\mathcal{NR}$, $\mathcal{R}$ and $\mathcal{Q}$ represent respectively remaining trilinear non-resonant terms, trilinear resonant terms and quintilinear terms as categorized below.

The non-resonant terms in $\mathcal{NR}$ are of the following type:
\begin{align}
&\mathcal{NR}(w, R[f]+h+k+w, R[f]+h+k+w); \label{eq:nr1}\\
&\mathcal{NR}(h+k, R[f]+h+k+w,R[f]+h+k+w);\label{eq:nr2}\\
&\mathcal{NR}(R[f]+h+k+w, R[f]+h+k+w, w);\label{eq:nr3}\\
&\mathcal{NR}(R[f]+h+k+w,R[f]+h+k+w, h+k).\label{eq:nr4}
\end{align}

The resonant term~$\mathcal{R}$ is same as the last term in \eqref{eqyper}, when $y$ is replaced with $k+w$.

The quintilinear terms in $\mathcal{Q}$ are of the following type:
\begin{align}
&J(\cf^{-1}_{\xi}[\mathcal{R}(R[f],R[f],R[f])],R[f],R[f]);\label{eq:q1}\\
&J(R[f],R[f], \cf^{-1}_{\xi}[\mathcal{R}(R[f],R[f],R[f])]).\label{eq:q2}
\end{align}

In the following section, we will examine in which functional spaces $R[f]$, $h$ and $k$ so that we can state the necessary estimates for $\mathcal{NR}$, $\mathcal{R}$ and $\mathcal{Q}$.

\subsection{Properties and regularity of $R[f]$, $h$ and $k$}\label{sec:properties}

We begin by examining an appropriate function space for $R[f]$.  We have already noted that $R[f]\in C^0_t L^2_x$.  The next lemma shows that $R[f] \in X_T^{0,b}$ for any $b\geq 0$, thus very close to the free solution~$e^{-t\p_x^3}f$.

\begin{lemma}\label{rffree}
Given $f\in L^2$, $s\leq \f{1}{2}$, $b\geq 0$ and $\eta \in \cs_t(\mathbf{R})$, we have
$$
\|\eta R[f]\|_{X^{0,b}} \lesssim \|\eta\|_{H^b} \max\left(\n{f}{L^2}, \n{f}{L^2}^{2b+1}\right)
$$
\end{lemma}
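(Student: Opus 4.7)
The plan is to compute the space-time Fourier transform of $\eta R[f]$ explicitly and then estimate the $X^{0,b}$ norm by hand. From the definition \eqref{defr}, the spatial Fourier transform of $R[f](t)$ at frequency $\xi\neq 0$ is $\widehat{f}(\xi) \exp\bigl(i\xi^3 t + 2ic(\xi) t\bigr)$, where
\[
c(\xi) := \f{\lan{\xi}^{2s}}{\xi} |\widehat{f}(\xi)|^2.
\]
Taking the Fourier transform in $t$ of the product $\eta(t) R[f](t,\cdot)$ gives $\widetilde{\eta R[f]}(\tau,\xi) = \widehat{f}(\xi)\, \widehat{\eta}\bigl(\tau - \xi^3 - 2c(\xi)\bigr)$.

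Next I would plug this into the definition of the $X^{0,b}$ norm and perform the change of variable $\sigma := \tau - \xi^3 - 2c(\xi)$:
\begin{align*}
\|\eta R[f]\|_{X^{0,b}}^2
 &= \sum_{\xi\neq 0} |\widehat{f}(\xi)|^2 \int (1+|\sigma + 2c(\xi)|)^{2b} |\widehat{\eta}(\sigma)|^2\, d\sigma\\
 &\lesssim \sum_{\xi\neq 0} |\widehat{f}(\xi)|^2 \int \bigl[(1+|\sigma|)^{2b} + |c(\xi)|^{2b}\bigr] |\widehat{\eta}(\sigma)|^2\, d\sigma,
\end{align*}
using the elementary bound $(1+|a+b|)^{2b}\lesssim (1+|a|)^{2b} + |b|^{2b}$. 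The first piece contributes $\|\eta\|_{H^b}^2 \|f\|_{L^2}^2$ by Plancherel, which already accounts for the linear-in-$\|f\|_{L^2}$ term on the right-hand side.

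For the second piece the key observation is that the hypothesis $s\leq 1/2$ combined with $\xi\in \mathbf{Z}\setminus\{0\}$ forces $\lan{\xi}^{2s}/|\xi|\lesssim 1$, so $|c(\xi)|\lesssim |\widehat{f}(\xi)|^2$. Therefore
\[
\sum_{\xi\neq 0} |\widehat{f}(\xi)|^2 |c(\xi)|^{2b} \lesssim \sum_{\xi\neq 0} |\widehat{f}(\xi)|^{2+4b} \leq \Bigl(\sup_{\xi} |\widehat{f}(\xi)|\Bigr)^{4b}\! \sum_{\xi}|\widehat{f}(\xi)|^2 \lesssim \|f\|_{L^2}^{4b+2},
\]
where the last step is just $\ell^\infty_\xi\hookrightarrow \ell^2_\xi$ applied to $\widehat{f}$. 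Multiplying by $\|\widehat{\eta}\|_{L^2}^2\lesssim \|\eta\|_{H^b}^2$ and taking square roots yields the claimed bound $\|\eta\|_{H^b}\max(\|f\|_{L^2},\|f\|_{L^2}^{2b+1})$.

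There is no serious obstacle here; the only place where the hypothesis $s\leq 1/2$ is used is in bounding $|c(\xi)|$ by $|\widehat{f}(\xi)|^2$, which is precisely what allows the phase-shift factor to be controlled by the Fourier coefficient of $f$ and ultimately by $\|f\|_{L^2}$. The quantitative dependence $\|f\|_{L^2}^{2b+1}$ arises only because the $c(\xi)$ is raised to the $2b$-th power in the modulation weight, so the bound is sharp in form.
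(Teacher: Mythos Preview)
Your proof is correct and follows essentially the same approach as the paper: compute the space-time Fourier transform explicitly, shift the $\tau$-variable, and use $\langle\sigma + 2c(\xi)\rangle^{b}\lesssim \langle\sigma\rangle^{b}\langle c(\xi)\rangle^{b}$ (the paper uses the multiplicative form rather than your additive split, but these are equivalent) together with $|c(\xi)|\lesssim |\widehat f(\xi)|^2\leq \|f\|_{L^2}^2$ from $s\le 1/2$. One minor slip: the embedding you invoke at the end should read $\ell^2_\xi\hookrightarrow \ell^\infty_\xi$, not the reverse, though the inequality you actually use is correct.
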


\begin{proof}
From \eqref{defr}, we have
$$
\wt{\eta \cdot R[f]}(\tau,\xi)= \wh{f}(\xi) \wh{\eta}\left(\tau - 2\f{\lan{\xi}^{2s}}{\xi} |\wh{f}(\xi)|^2 - \xi^3\right).
$$

Let $a_{\xi}:= 2\f{\lan{\xi}^{2s}}{\xi}|\wh{f}(\xi)|^2$,  then
\begin{align*}
\|\eta R[f]\|_{X^{0,b}} &= \n{\lan{\tau-\xi^3}^b \wh{f}(\xi) \wh{\eta}(\tau - a_{\xi} - \xi^3)}{L^2_{\tau}l^2_{\xi}}\\
	&\lesssim \n{\lan{\tau-a_{\xi} -\xi^3}^b \lan{a_{\xi}}^b \wh{f}(\xi) \wh{\eta}(\tau - a_{\xi} - \xi^3)}{l^2_{\xi} L^2_{\tau}}\\
	&\lesssim \n{\lan{\tau}^b \wh{\eta}(\tau)}{L^2_{\tau}} \n{\lan{a_{\xi}}^b \wh{f}(\xi)}{l^2_{\xi}}.
\end{align*}
Noting that $\sup_{\xi} |a_{\xi}| \lesssim \|f\|_{L^2}$, we have the desired estimate.
\end{proof}
Next, we establish the Lipschitz continuity of the map $R[f]$ on $L^2(\mathbf{T})$.

\begin{lemma}\label{lipschitz}
Let $R$ be defined as in \eqref{defr} with $s<1/2$ and $\gamma\in \mathbf{R}$.  Then for any $f, g\in L^2(\mathbf{T})$ with $f-g \in H^{\gamma}$,
$$
\n{R[f] - R[g]}{C^0_t H^{\gamma}_x([0,T]\times \mathbf{T})} \leq C_{N,T} \|f-g\|_{H^{\gamma}(\mathbf{T})}
$$
where $\|f\|_{L^2}+\|g\|_{L^2} < N$.
\end{lemma}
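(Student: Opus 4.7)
\medskip

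\noindent\textbf{Proof plan for Lemma~\ref{lipschitz}.}
The plan is to work frequency-by-frequency using the explicit formula \eqref{defr} and reduce the problem to a simple scalar inequality after extracting the phase factor~$e^{-i\xi^3 t}$. Since
\[
\cf_x R[f](t,\xi) = \wh{f}(\xi)\, e^{2it \frac{\lan{\xi}^{2s}}{\xi}|\wh{f}(\xi)|^2}\, e^{i\xi^3 t},
\]
and similarly for $g$, the free-wave factor $e^{i\xi^3 t}$ is unimodular and can be pulled out. I would then telescope the difference as
\[
\wh{f}(\xi)e^{i a_\xi(f) t} - \wh{g}(\xi) e^{ia_\xi(g) t} = (\wh{f}(\xi) - \wh{g}(\xi)) e^{ia_\xi(f) t} + \wh{g}(\xi)\bigl(e^{ia_\xi(f)t} - e^{ia_\xi(g) t}\bigr),
\]
where $a_\xi(f) := 2\lan{\xi}^{2s}|\wh{f}(\xi)|^2/\xi$.

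The first term contributes, after multiplying by $\lan{\xi}^{\ga}$ and taking $\ell^2_\xi$, exactly $\|f-g\|_{H^\ga}$. For the second term, the plan is to apply the elementary bound $|e^{ia} - e^{ib}|\leq |a-b|$ and then factor
\[
\bigl||\wh{f}(\xi)|^2 - |\wh{g}(\xi)|^2\bigr| \leq |\wh{f}(\xi) - \wh{g}(\xi)|\cdot\bigl(|\wh{f}(\xi)| + |\wh{g}(\xi)|\bigr).
\]
Here I would use the hypothesis $s<1/2$ (in fact $s\leq 1/2$ suffices): because the sum is restricted to $\xi\in\mathbf{Z}\setminus\{0\}$, the weight $\lan{\xi}^{2s}/|\xi|$ is uniformly bounded, so $|a_\xi(f)-a_\xi(g)| \lesssim |\wh{f}(\xi) - \wh{g}(\xi)|(\|f\|_{L^2}+\|g\|_{L^2})$ after using the pointwise Plancherel bound $|\wh{h}(\xi)|\lesssim \|h\|_{L^2}$.

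Combining these estimates yields, for each $t\in[0,T]$,
\[
\|R[f](t)-R[g](t)\|_{H^\ga} \lesssim \bigl(1 + |t|(\|f\|_{L^2}+\|g\|_{L^2})^2\bigr)\|f-g\|_{H^\ga},
\]
which gives the desired constant $C_{N,T}$ after taking $\sup_{t\in[0,T]}$. Continuity in $t$ is automatic: the Fourier multipliers $\xi\mapsto e^{ia_\xi(f)t}$ and $\xi\mapsto e^{ia_\xi(g)t}$ are uniformly bounded and continuous in $t$, so dominated convergence upgrades the pointwise-in-$t$ bound to a $C^0_t H^\ga_x$ bound.

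The only genuine obstacle is the behavior of the multiplier $\lan{\xi}^{2s}/\xi$ appearing in the resonant phase: it is precisely the restriction $s\leq 1/2$ that keeps it uniformly bounded on $\mathbf{Z}\setminus\{0\}$, ensuring that the phase is Lipschitz in the amplitude $|\wh{f}(\xi)|^2$. If $s$ exceeded $1/2$ this multiplier would be unbounded and the simple mean-value argument above would fail, which is consistent with the non-uniform continuity statement alluded to in Remark~\ref{rem:nonuniform}. No further machinery (Bourgain spaces, multilinear estimates) is needed for this lemma.
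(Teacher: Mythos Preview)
Your proof is correct and in fact considerably more direct than the paper's own argument. The paper writes $\wh{f}(\xi)=|\wh{f}(\xi)|e^{i\alpha_\xi}$, $\wh{g}(\xi)=|\wh{g}(\xi)|e^{i\beta_\xi}$ in polar form, factors out a common unimodular phase, then applies the Law of cosines to rewrite the squared difference as $(|\wh{f}|-|\wh{g}|)^2 + 2|\wh{f}||\wh{g}|(1-\cos\phi)$. After the half-angle identity and the elementary bound $\sin^2(A+B)\lesssim A^2+\sin^2 B$, this produces two pieces: one handled by H\"older, and a second term $|\wh{f}||\wh{g}|\sin^2\theta_\xi$ that is controlled via the Law of sines observation $|\wh{f}|\sin\theta_\xi\leq |\wh{f}-\wh{g}|$.

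Your additive telescoping
\[
\wh{f}\,e^{ia_\xi(f)t}-\wh{g}\,e^{ia_\xi(g)t}=(\wh{f}-\wh{g})e^{ia_\xi(f)t}+\wh{g}\bigl(e^{ia_\xi(f)t}-e^{ia_\xi(g)t}\bigr)
\]
bypasses all of this trigonometry. The mean-value bound $|e^{ia}-e^{ib}|\leq|a-b|$ together with $\bigl||\wh{f}|^2-|\wh{g}|^2\bigr|\leq|\wh{f}-\wh{g}|(|\wh{f}|+|\wh{g}|)$ and the uniform boundedness of $\lan{\xi}^{2s}/|\xi|$ on $\mathbf{Z}\setminus\{0\}$ (the place where $s\leq 1/2$ enters) immediately gives the Lipschitz estimate with the explicit constant $C_{N,T}\sim 1+TN^2$. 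The paper's route ultimately yields the same dependence, but your argument is shorter, avoids the polar decomposition entirely, and makes the role of the hypothesis $s<1/2$ more transparent. Either approach would serve here; yours is the cleaner one.
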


\begin{proof}
First we write $\widehat{f}(\xi) = |\widehat{f}(\xi)| e^{i \alpha_{\xi}}$ and $\widehat{g}(\xi) = |\widehat{g}(\xi)| e^{i \beta_{\xi}}$.  Denote $\theta_{\xi} :=\alpha_{\xi} - \beta_{\xi} $  Then, the Law of cosines, triangle and H\"older's inequality gives
\begin{align*}
\n{R[f] - R[g]}{C^0_t H^{\gamma}_x([0,T]\times \mathbf{T})} &= \sup_{t\in [0,T]} \n{\langle \xi\rangle^{\gamma} (|\widehat{f}| e^{2it\frac{\langle \xi\rangle^{2s}}{\xi}(|\widehat{f}|^2 - |\widehat{g}|^2 )+ i\theta_{\xi} } -  |\widehat{g}| )}{l^2_{\xi} (\mathbf{Z}\setminus \{0\})}\\
	&\hspace{-120pt}= \sup_{t\in [0,T]} \n{\langle \xi\rangle^{\gamma} \left( |\widehat{f}|^2 + |\widehat{g}|^2 - 2 |\widehat{f}| |\widehat{g}| \cos ( 2t\frac{\langle \xi\rangle^{2s}}{\xi}(|\widehat{f}|^2 - |\widehat{g}|^2 ) + \theta_{\xi} ) \right)^{\frac{1}{2}}}{l^2_{\xi} (\mathbf{Z}\setminus \{0\})}\\
	&\hspace{-120pt}\lesssim \|\langle \xi\rangle^{\gamma} (|\widehat{f}|-|\widehat{g}|)\|_{l^2_{\xi}} + 2 \sup_{t\in [0,T]} \n{\langle \xi\rangle^{2\gamma}  |\widehat{f}| |\widehat{g}|( 1 - \cos ( 2t\frac{\langle \xi\rangle^{2s}}{\xi}(|\widehat{f}|^2 - |\widehat{g}|^2 )+ \theta_{\xi} )}{l^1_{\xi} (\mathbf{Z}\setminus \{0\})}^{\frac{1}{2}}\\
	&\hspace{-120pt}\lesssim \|f-g\|_{H^{\gamma}} + 4\sup_{t\in [0,T]}  \n{\langle \xi\rangle^{2\gamma}  |\widehat{f}| |\widehat{g}| \sin^2 \left(t\frac{\langle \xi\rangle^{2s}}{\xi}(|\widehat{f}|^2 - |\widehat{g}|^2 )+ \theta_{\xi} \right)}{l^{1}_{\xi} (\mathbf{Z}\setminus \{0\})}^{\frac{1}{2}}.
\end{align*}
Using $\sin^2 (A+B) \lesssim A^2 + \sin^2 B$ and the assumption $s<1/2$, we need to estimate
\begin{align}
& \n{\langle \xi\rangle^{2\gamma}  |\widehat{f}| |\widehat{g}| (|\widehat{f}|^2 - |\widehat{g}|^2 )^2}{l^{1}_{\xi} (\mathbf{Z}\setminus \{0\})}^{\frac{1}{2}},\label{lip1}\\
&\n{\langle \xi\rangle^{2\gamma}  |\widehat{f}| |\widehat{g}| \sin^2 \theta_{\xi}}{l^{1}_{\xi} (\mathbf{Z}\setminus \{0\})}^{\frac{1}{2}}.\label{lip2}
\end{align}
The bound for \eqref{lip1} is straight-forward.  By H\"older's and triangle inequalty,
$$
\eqref{lip1} \lesssim \|f\|_{L^2}^{\frac{1}{2}} \|g\|_{L^2}^{\frac{1}{2}} \n{\langle \xi \rangle^{\gamma} (\widehat{f} - \widehat{g}) (|\widehat{f}| + |\widehat{g}|)}{l^{\infty}_{\xi}} \lesssim \|f\|_{L^2}^{\frac{3}{2}} \|g\|_{L^2}^{\frac{3}{2}} \|f-g\|_{H^{\gamma}}.
$$

For \eqref{lip2}, we apply the Law of sines.  Without loss of generality, we can assume $\theta_{\xi} \in (0,\pi)$.  Noting that the triangle with side-lengths equal to $|\widehat{f}|, |\widehat{g}|, |\widehat{f} - \widehat{g}|$ has the angle~$\theta_{\xi}$ which is opposite to the side with length $|\widehat{f}-\widehat{g}|$, we can deduce that $|\widehat{f}|\sin \theta_{\xi}\leq |\widehat{f} - \widehat{g}|$ and likewise for $|\widehat{g}|$.  Thus,
$$
\eqref{lip2} \leq \n{\langle \xi\rangle^{2\gamma} |\widehat{f} - \widehat{g}|^2 }{l^1_{\xi}}^{\frac{1}{2}} \sim \n{\langle \xi \rangle^{\gamma} |\widehat{f} - \widehat{g}|}{l^2_{\xi}} \sim \|f-g\|_{H^{\gamma}}.
$$
\end{proof}

We take a detour to make a remark about the uniform discontinuity of $R[\cdot]$ when $s>1/2$ (equivalently, uniform discontinuity of $R^*[\cdot]$ in $H^{-s}$).  The following remark serves as a heuristic evidence that $R^*[f]$ dominates the evolution of the periodic KdV below $H^{-1/2}$.

\begin{remark}\label{rem:nonuniform}
For any $\ve>0$, $R[\cdot]: L^2 \to L^{\infty}_t([0,\ve]; L^2_x)$ is not uniformly continuous on a bounded set of $L^2(\bt)$.
\end{remark}

\begin{proof}
Given any $0<\de, \ve \ll 1$, we show that there exists $f,g \in L^2$ such that $\|f\|_{L^2}+\|g\|_{L^2}\leq 2$ and $\|f-g\|_{L^2} \leq \de$, but $\sup_{t\in [0,\ve]} \|R[f](t)-R[g](t)\|_{L^2_x} \geq 1$.

For some $\xi\in \mathbf{Z}\setminus\{0\}$ to be determined, define $f(x) := e^{ix\xi}$ and $g(x) := (1-\de) e^{ix\xi}$.  Then applying the law of cosines,
\begin{align*}
\n{R[f](t) - R[g](t)}{L^2}^2 &= \left| e^{2i\f{\lan{\xi}^{2s}}{\xi} t + \xi^3 t} - (1-\de) e^{2i\f{\lan{\xi}^{2s}}{\xi} |1-\de|^2 t + \xi^3 t} \right|^2\\
	&= 1 + (1-\de)^2  - 2(1-\de) \cos\left(2\f{\lan{\xi}^{2s}}{\xi}(1-(1-\de)^2)t\right) 
\\
	&= \de^2 + 2(1-\de)\left[ 1- \cos \left(2\f{\lan{\xi}^{2s}}{\xi}(2\de -\de^2)t\right) \right]\\
		&= \de^2 + 4(1-\de)\sin^2\left(\f{\lan{\xi}^{2s}}{\xi}\de(2-\de)t\right) \geq 2\sin^2\left(\f{\lan{\xi}^{2s}}{\xi}\de(2-\de)t\right).
\end{align*}

If $s>1/2$, choose $\xi\in \mathbf{Z}$ so that $\f{\lan{\xi}^{2s}}{\xi}\de(2-\de)t = \arcsin (1/\sqrt{2})$ for some $t\in [0,\ve]$.
\end{proof}

Next, we consider mapping properties of the bilinear operator~$T$, which will give us the appropriate functional space for $h$ as well as the necessary regularity for the initial data $w(0)$.

The following lemma implies that $T(f,f) \in H^1(\mathbf{T})$ and also that $h\in C^0_t([0,T]; H^1_x) \subset X^{1,0}_T$ for any $T>0$.

\begin{lemma}\label{tpmap1}
$T: L^2(\mathbf{T}) \times L^2(\mathbf{T}) \to H^{1}(\mathbf{T})$ is a bounded bilinear operator.
\end{lemma}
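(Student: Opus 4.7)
The plan is to reduce the claim to a Schur-type multiplier bound on the symbol of $T$ and then verify the bound by a straightforward case analysis on the frequencies.

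First, I would take Fourier transform of $T(u,v)$ and compute, for $\xi \neq 0$,
\begin{equation*}
\langle\xi\rangle\bigl|\widehat{T(u,v)}(\xi)\bigr| \lesssim \sum_{\substack{\xi_1+\xi_2=\xi \\ \xi_1\xi_2\neq 0}} M(\xi_1,\xi_2)\,|\widehat{u}(\xi_1)|\,|\widehat{v}(\xi_2)|, \qquad M(\xi_1,\xi_2) := \frac{\langle\xi\rangle^{1-s}\langle\xi_1\rangle^s\langle\xi_2\rangle^s}{|\xi_1\xi_2|}.
\end{equation*}
Applying Cauchy--Schwarz in the $(\xi_1,\xi_2)$-sum and then summing in $\xi$, one gets
\begin{equation*}
\|T(u,v)\|_{H^1}^2 \lesssim \Bigl(\sup_{\xi\neq 0}\sum_{\xi_1+\xi_2=\xi}M(\xi_1,\xi_2)^2\Bigr)\,\|u\|_{L^2}^2\|v\|_{L^2}^2,
\end{equation*}
so everything reduces to verifying the uniform multiplier bound $\sup_{\xi}\sum_{\xi_1+\xi_2=\xi}M(\xi_1,\xi_2)^2 \lesssim 1$.

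Next I would split the inner sum into frequency regimes, using $\xi_1+\xi_2=\xi$ and exploiting the symmetry in $\xi_1,\xi_2$. In the regime $|\xi_2|\le |\xi|/2$ (so $|\xi_1|\sim|\xi|$), the symbol simplifies to $M^2 \lesssim |\xi_2|^{2s-2}$, which is summable in $\xi_2\in\mathbb{Z}\setminus\{0\}$ because $s<1/2$. In the regime $|\xi_1|,|\xi_2|\gtrsim|\xi|$, the constraint $\xi_1+\xi_2=\xi$ forces $|\xi_1|\sim|\xi_2|=:n$, and $M^2 \lesssim \langle\xi\rangle^{2-2s} n^{4s-4}$, whose sum over $n\gtrsim|\xi|$ is comparable to $\langle\xi\rangle^{2s-1}$ and hence uniformly bounded for $s\le 1/2$. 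The remaining high-high to comparable case ($|\xi_1|,|\xi_2|\sim|\xi|$ with same signs) involves only $O(|\xi|)$ terms each of size $\lesssim \langle\xi\rangle^{2s-2}$, again giving $\langle\xi\rangle^{2s-1}\lesssim 1$.

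The only mildly delicate point, which I would flag as the main thing to watch, is that the borderline exponent $s=1/2$ makes the high-high estimate exactly marginal: the bound is $\langle\xi\rangle^{2s-1}$, and losing a logarithm would be fatal. The computation above is sharp enough to handle the endpoint because the $n$-summation and the $\langle\xi\rangle^{2-2s}$ prefactor combine exactly, but care must be taken to use $|\xi_1|\sim|\xi_2|$ (forced by $\xi_1+\xi_2=\xi$ when both are $\gg|\xi|$) rather than cruder estimates, and to separate the two sign configurations of $\xi_1,\xi_2$ so that the counting of pairs is sharp. Once these regimes are all controlled, the uniform bound follows and the lemma is proved.
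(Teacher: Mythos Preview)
Your proof is correct and follows a somewhat different route from the paper's. After writing $\|T(u,v)\|_{H^1}$ on the Fourier side and using the $\xi_1\leftrightarrow\xi_2$ symmetry to assume $|\xi_1|\geq|\xi-\xi_1|$, the paper does \emph{not} apply Cauchy--Schwarz in the convolution sum. Instead it splits $|\xi-\xi_1|^{-1}=|\xi-\xi_1|^{-1/2+\varepsilon}\cdot|\xi-\xi_1|^{-1/2-\varepsilon}$ and pulls the low-frequency factor out in physical space via the Sobolev embedding $H^{1/2+\varepsilon}(\mathbf{T})\hookrightarrow L^\infty(\mathbf{T})$, reducing everything to a single pointwise symbol bound
\[
M_\varepsilon=\sup_{\xi\xi_1(\xi-\xi_1)\neq 0}\frac{\langle\xi_1\rangle^s\langle\xi-\xi_1\rangle^s\langle\xi\rangle^{1-s}}{|\xi_1|\,|\xi-\xi_1|^{1/2-\varepsilon}}<\infty\quad(s<1/2).
\]
This trades your frequency case analysis for an appeal to Sobolev embedding plus one supremum. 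Your Schur-test approach is more elementary (no embedding needed) and makes the summability thresholds in each regime explicit; the paper's approach is shorter and avoids the regime splitting altogether.

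One small remark: your closing discussion of the ``endpoint'' $s=1/2$ is slightly misleading. Your low-high case already needs $s<1/2$ for $\sum_{\xi_2\neq 0}|\xi_2|^{2s-2}$ to converge, so the lemma genuinely requires the strict inequality regardless of how sharply the high-high regime is handled; the paper's proof likewise only claims the result for $s<1/2$.
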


\begin{proof}
Let $u,v\in C^{\infty}(\mathbf{T})$.  Then
\begin{equation}\label{tpeq}
\| T(u,v)\|_{H^{1}} \sim \n{\sum_{\xi_1 (\xi-\xi_1)\neq 0} \frac{\langle\xi_1\rangle^s \langle\xi-\xi_1\rangle^s \langle\xi\rangle^{1-s} }{\xi_1 (\xi-\xi_1)} \widehat{u}(\xi_1) \widehat{v}(\xi-\xi_1)}{l^2_{\xi}(\mathbf{Z}\setminus \{0\}) }.
\end{equation}

By symmetry, we can assume $|\xi_1| \geq |\xi-\xi_1|$.  Then by H\"older and Sobolev embedding,
\begin{align*}
\eqref{tpeq} &\lesssim  M_{\ve} \n{\sum_{\xi_1 \neq \xi} |\widehat{u}|(\xi_1) \frac{|\widehat{v}|(\xi-\xi_1)}{|\xi-\xi_1|^{\frac{1}{2}+\varepsilon}}}{l^2_{\xi}(\mathbf{Z}) }\\
	 &\lesssim M_{\ve} \n{\mathcal{F}^{-1}[ |\widehat{u}|] |\p_x |^{-\frac{1}{2}-\varepsilon} \mathcal{F}^{-1}[|\widehat{v}|]}{L^2_{x}(\mathbf{T})} \lesssim_{\varepsilon} M_{\ve} \| u\|_{L^2(\mathbf{T})} \| v \|_{L^2_{x}(\mathbf{T})}
\end{align*}
where we take $\ve>0$ small and
$$
M_{\ve} := \sup_{\xi \xi_1 (\xi-\xi_1)\neq 0}\frac{\langle\xi_1\rangle^s \langle\xi-\xi_1\rangle^s \langle\xi\rangle^{1-s} }{|\xi_1| |\xi-\xi_1|^{\frac{1}{2}-\varepsilon}}.
$$

It is easy to see that $M$ is a bounded quantity if $s<1/2$, thus the claim follows.
\end{proof}

The following lemma gives an estimate on the term~$h$.

\begin{lemma}\label{le:tspace}
Let $u,v \in X^{0,\f{1}{2}}\cap \wt{l^2_{\xi} L^1_{\tau}}$.  Then
\[
\|T(u,v)\|_{X^{0, \f{1}{2}}} \lesssim \|u\|_{X^{0,\f{1}{2}}\cap \wt{l^2_{\xi} L^1_{\tau}}} \|v\|_{X^{0,\f{1}{2}}\cap \wt{l^2_{\xi} L^1_{\tau}}}.
\] 
\end{lemma}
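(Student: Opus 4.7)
The plan is to apply Plancherel and perform the standard Bourgain case analysis based on the cubic resonance identity. Write $\xi = \xi_1 + \xi_2$ (with $\xi_1\xi_2\xi \neq 0$), $\tau = \tau_1+\tau_2$, and set $L_j := \lan{\tau_j - \xi_j^3}$ for $j=0,1,2$; the identity $\tau-\xi^3 = (\tau_1-\xi_1^3)+(\tau_2-\xi_2^3) - 3\xi_1\xi_2\xi$ gives $L_0^{1/2} \lesssim L_1^{1/2} + L_2^{1/2} + |\xi_1\xi_2\xi|^{1/2}$. I split
$$\n{T(u,v)}{X^{0,1/2}} = \n{L_0^{1/2}\widetilde{T(u,v)}}{l^2_\xi L^2_\tau}$$
into three pieces according to which of the three terms on the right-hand side is selected.

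For the \emph{modulation-dominant} pieces, by symmetry consider $L_0^{1/2}\lesssim L_1^{1/2}$. Absorb $L_1^{1/2}$ into $u$ to form $A(\tau_1,\xi_1):=L_1^{1/2}|\widetilde u|$ with $\|A\|_{l^2_\xi L^2_\tau}=\|u\|_{X^{0,1/2}}$, and set $B:=|\widetilde v|$ with $\|B\|_{\widetilde{l^2_\xi L^1_\tau}} = \|v\|_{\widetilde{l^2_\xi L^1_\tau}}$. Young's inequality in $\tau$ gives
$$\Bigl\|\int A(\tau_1,\xi_1)B(\tau-\tau_1,\xi_2)\,d\tau_1\Bigr\|_{L^2_\tau} \leq \|A(\cdot,\xi_1)\|_{L^2_\tau} \|B(\cdot,\xi_2)\|_{L^1_\tau}.$$
Summing in $\xi_1$ via Cauchy--Schwarz reduces matters to the uniform bound
$$\sup_{\xi\neq 0}\sum_{\substack{\xi_1+\xi_2=\xi\\ \xi_j\neq 0}}\Bigl(\frac{\lan{\xi_1}^s\lan{\xi_2}^s}{\lan{\xi}^s|\xi_1||\xi_2|}\Bigr)^2 <\infty,$$
which is valid for $s<1/2$: the dominant contribution from $|\xi_1|\leq|\xi_2|\sim|\xi|$ is controlled by $\lan{\xi}^{-2}\sum\lan{\xi_1}^{2s-2}$, convergent because $2s-2<-1$. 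This yields the desired bound by $\|u\|_{X^{0,1/2}}\|v\|_{\widetilde{l^2_\xi L^1_\tau}}$; the $L_2$-dominant piece is symmetric.

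For the \emph{cubic-dominant} piece, $L_0^{1/2}\lesssim|\xi_1\xi_2\xi|^{1/2}$ collapses the symbol of $L_0^{1/2}\widetilde{T(u,v)}$ to $\lan{\xi_1}^s\lan{\xi_2}^s\lan{\xi}^{1/2-s}/(|\xi_1|^{1/2}|\xi_2|^{1/2})$, which, assuming without loss $|\xi_1|\leq|\xi_2|$ (so $\lan{\xi}\lesssim\lan{\xi_2}$), is dominated by $\lan{\xi_1}^{s-1/2}\leq 1$ for $s<1/2$. By Plancherel the remaining piece is $\|UV\|_{L^2_{t,x}(\mathbf{R}\times\mathbf{T})}$ for auxiliary functions $U,V$ with $|\widetilde U|=|\widetilde u|$, $|\widetilde V|=|\widetilde v|$, and H\"older gives $\|UV\|_{L^2}\leq\|U\|_{L^4_{t,x}}\|V\|_{L^4_{t,x}}$. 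The proof closes once the global embedding
$$\|w\|_{L^4_{t,x}(\mathbf{R}\times\mathbf{T})}\lesssim \|w\|_{X^{0,1/2}\cap\widetilde{l^2_\xi L^1_\tau}}$$
is established, by interpolating the local-in-time Bourgain estimate \eqref{xsb3} against the global embedding $\widetilde{l^2_\xi L^1_\tau}\hookrightarrow L^\infty_t L^2_x$ via a partition of unity in time.

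The main obstacle will be the cubic-dominant piece, specifically the global $L^4_{t,x}$ embedding: at the borderline modulation exponent $b=1/2$, such an estimate fails for $X^{0,1/2}$ alone on $\mathbf{R}\times\mathbf{T}$, so the additional $\widetilde{l^2_\xi L^1_\tau}$ structure must be used crucially. The modulation-dominant pieces, by contrast, reduce rapidly to standard convolution and Cauchy--Schwarz arguments.
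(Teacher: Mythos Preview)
Your decomposition and overall strategy match the paper's: split via the cubic resonance identity into modulation-dominant and cubic-dominant pieces, and close the latter with the $L^4$ embedding. For the modulation-dominant piece you argue via Young in $\tau$ plus Cauchy--Schwarz in $\xi_1$ with the summable multiplier bound, whereas the paper bounds the symbol pointwise and then applies the Sobolev embedding $H^{1-s}(\mathbf T)\hookrightarrow L^\infty(\mathbf T)$ to the low-modulation factor, landing on $\|\widetilde v\|_{l^2_\xi L^1_\tau}$ through $L^\infty_t L^2_x$; these two routes are equivalent.

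The one genuine divergence is your handling of the global-in-time $L^4$ issue. The paper simply invokes \eqref{xsb3} and bounds the cubic piece by $\|u\|_{X^{0,1/3}}\|v\|_{X^{0,1/3}}$ without discussing local versus global time; this is consistent with how the lemma is actually used, namely only after time localization (see the remark following the lemma and \eqref{eq:tspace}). Your concern that $X^{0,1/2}\hookrightarrow L^4(\mathbf R\times\mathbf T)$ fails globally is legitimate, but the partition-of-unity fix you sketch does not close as written: summing $\sum_n\|\chi_n w\|_{X^{\varepsilon,1/3}}^4$ over unit time intervals produces no decay in $n$, and interpolating the local $L^4$ bound against $\widetilde{l^2_\xi L^1_\tau}\hookrightarrow L^\infty_t L^2_x$ does not by itself yield any time decay either. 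The cleanest resolution is simply to read (or restate) the lemma with an implicit time cut-off, which is how the paper uses it.
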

\textbf{Remark:} By interpolating this estimate with Lemma~\ref{tpmap1}, we also obtain for $b\in [0,1/2]$
\begin{equation}\label{eq:tspace}
\|T(v,v)\|_{X^{1-2b, b}_1} \lesssim  \|v\|^2_{X^{0,\f{1}{2}}\cap \wt{l^2_{\xi} L^1_{\tau}}}.
\end{equation}
\begin{proof}
Recall that $\n{T(u,v)}{X^{0, \f{1}{2}}} =$
\begin{equation}\label{eq:tuv}
\n{\lan{\tau-\xi^3}^{\f{1}{2}} \int_{\mathbf{R}}\sum_{\xi_1(\xi-\xi_1) \neq 0} \frac{\langle\xi_1\rangle^s \langle\xi -\xi_1\rangle^s}{\xi_1 (\xi - \xi_1)\lan{\xi}^{s}} \wt{u}(\tau_1, \xi_1) \wt{v} (\tau - \tau_1,\xi-\xi_1) \, d\tau_1}{L^2_{\tau} l^2_{\xi}}
\end{equation}
Since $[\tau_1 - \xi_1^3] + [(\tau-\tau_1) - (\xi-\xi_1)^3] = [\tau - \xi^3] + 3\xi \xi_1 (\xi-\xi_1)$, we write
\begin{equation}\label{eq:tauxi}
\lan{\tau-\xi^3}^{\f{1}{2}} \lesssim \lan{\tau_1 - \xi_1^3}^{\f{1}{2}} + \lan{(\tau-\tau_1) -(\xi-\xi_1)^3}^{\f{1}{2}} + \lan{\xi}^{\f{1}{2}} \lan{\xi_1}^{\f{1}{2}} \lan{\xi-\xi_1}^{\f{1}{2}}.
\end{equation}
Since \eqref{eq:tuv} is symmetric in $\xi_1, \xi-\xi_1$ variable, we ignore the middle term on RHS of \eqref{eq:tauxi}.

To estimate the component~$I_1$ of \eqref{eq:tuv} containing $\lan{\tau_1 -\xi_1^3}^b$, note 
\[
\sup_{\xi \xi_1 (\xi-\xi_1)\neq 0} \f{\lan{\xi_1}^s \lan{\xi-\xi_1}^s }{|\xi_1| |\xi-\xi_1|^{s}\lan{\xi}^{s}} <\infty
\]
 
Thus we apply Plancherel, H\"older's and Sobolev embedding to obtain
\begin{align*}
I_1 &\lesssim  \n{\int_{\mathbf{R}} \sum_{\xi-\xi_1\neq 0} \lan{\tau_1 -\xi_1^3}^{\f{1}{2}} |\wt{u}|(\tau_1, \xi_1) \f{1}{|\xi-\xi_1|^{1-s}}|\wt{v}|(\tau-\tau_1, \xi-\xi_1) \, d\tau_1}{L^2_{\tau} l^2_{\xi}}\\
	&\lesssim \n{\lan{\tau -\xi^3}^{\f{1}{2}} |\wt{u}|}{L^2_{\tau}l^2_{\xi}} \n{\f{1}{|\nabla|^{1-s}} \cf^{-1}_{\tau,\xi} [|\wt{v}|]}{L^{\infty}_{t} L^{\infty}_{x}}\\
	&\lesssim_s \n{u}{X^{0,\f{1}{2}}} \n{\cf^{-1}_{\tau,\xi}[|\wt{v}|]}{L^{\infty}_t L^2_x} \lesssim \n{u}{X^{0,\f{1}{2}}} \n{\wt{v}}{l^2_{\xi} L^1_{\tau}}.
\end{align*}

The other component~$I_2$ containing the last term of \eqref{eq:tuv} is more direct.
\begin{align*}
I_2 &\lesssim \n{\int_{\mathbf{R}}\sum_{\xi_1(\xi-\xi_1) \neq 0} \frac{\langle\xi_1\rangle^{s+\f{1}{2}} \langle\xi -\xi_1\rangle^{s+\f{1}{2}}\lan{\xi}^{\f{1}{2}-s}}{\xi_1 (\xi - \xi_1)} |\wt{u}|(\tau_1, \xi_1) |\wt{v}| (\tau - \tau_1,\xi-\xi_1) \, d\tau_1}{L^2_{\tau} l^2_{\xi}}\\
&\lesssim \sup_{\xi_1(\xi-\xi_1)\neq 0} \frac{\langle\xi_1\rangle^{s+\f{1}{2}} \langle\xi -\xi_1\rangle^{s+\f{1}{2}}\lan{\xi}^{\f{1}{2}-s}}{|\xi_1| |\xi - \xi_1|} \n{\int_{\mathbf{R}}\sum_{\xi_1(\xi-\xi_1) \neq 0}  |\wt{u}|(\tau_1, \xi_1) |\wt{v}| (\tau - \tau_1,\xi-\xi_1) \, d\tau_1}{L^2_{\tau} l^2_{\xi}}\\
	&\lesssim \| \cf^{-1}_{\tau,\xi}[|\wt{u}|] \|_{L^4_{t,x}} \|\cf^{-1}_{\tau,\xi}[|\wt{u}|]\|_{L^4_{t,x}} \lesssim \|u\|_{X^{0,\f{1}{3}}} \|v\|_{X^{0,\f{1}{3}}}
\end{align*}
where the last inequality follows from \eqref{xsb3}.  Thus we have the claim.
\end{proof}

Finally, we consider mapping properties of the trilinear operator~$J$ and derive an appropriate function space for $k$.  The following lemma implies that $J(f,f,f)\in H^1(\bt)$ and also that $k \in C^0_t([0,T];H^1_x)\subset X^{1,0}_T$.

\begin{lemma}\label{le:jmap}
$J: L^2(\bt) \times L^2(\bt)\times L^2(\bt) \to H^1(\bt)$ is a bounded trilinear operator.
\end{lemma}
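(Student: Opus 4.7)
The goal is to bound
\[
\|J(u,v,w)\|_{H^1}\sim \Big\|\sum_{\substack{\xi_1+\xi_2+\xi_3=\xi\\ \text{non-res.}}}\!\! m(\xi_1,\xi_2,\xi_3)\,\widehat u(\xi_1)\widehat v(\xi_2)\widehat w(\xi_3)\Big\|_{l^2_\xi(\mathbf{Z}\setminus\{0\})},\qquad
m:=\frac{\langle\xi\rangle^{1-s}\langle\xi_1\rangle^s\langle\xi_2\rangle^s\langle\xi_3\rangle^s}{|\xi_3|\,|(\xi_1+\xi_2)(\xi_2+\xi_3)(\xi_3+\xi_1)|},
\]
by $\|u\|_{L^2}\|v\|_{L^2}\|w\|_{L^2}$. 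Our strategy mirrors Lemma~\ref{tpmap1}: extract from $m$ a bounded ``Marcinkiewicz-type'' factor and leave behind a manifestly convergent convolution that can be absorbed via Plancherel and the Sobolev embedding $H^{1/2+\varepsilon}(\mathbf{T})\hookrightarrow L^\infty(\mathbf{T})$. The non-resonance condition and integrality both enter crucially: each pair-sum $\xi_i+\xi_j$ is a nonzero integer, so $|\xi_i+\xi_j|\ge 1$, and $\langle\xi_j\rangle^s/|\xi_j|\leq\langle\xi_j\rangle^{s-1}\leq 1$ for $s<1/2$.

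First, using $\langle\xi\rangle\leq\langle\xi_1\rangle+\langle\xi_2\rangle+\langle\xi_3\rangle$, reduce to the case $\langle\xi\rangle^{1-s}\lesssim \langle\xi_*\rangle^{1-s}$ where $|\xi_*|=\max_j|\xi_j|$. By the symmetry of $J$ in its first two slots, assume $|\xi_1|\ge|\xi_2|$; the two cases to handle are then $\xi_*=\xi_1$ and $\xi_*=\xi_3$. In the first case, writing $\langle\xi_1\rangle^s\cdot\langle\xi_1\rangle^{1-s}=\langle\xi_1\rangle$ and using $\langle\xi_3\rangle^s/|\xi_3|\lesssim 1$, one obtains
\[
\langle\xi\rangle\cdot m\lesssim \frac{\langle\xi_1\rangle\,\langle\xi_2\rangle^s}{|\xi-\xi_1|\,|\xi-\xi_2|\,|\xi-\xi_3|},
\]
and an analogous expression in the second case. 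Now $\xi-\xi_j=\xi_i+\xi_k$ is a nonzero integer, so each of these denominators is $\ge 1$; moreover, at most one of the three integers $\xi-\xi_j$ can be comparable to unity because their sum is $2\xi$, so at least two of them control a power of $\max_j|\xi_j|\gtrsim\langle\xi\rangle$. This lets us split off one ``small'' factor of the form $|\xi-\xi_j|^{-1}$ (bounded absolutely by $\langle\xi-\xi_j\rangle^{-1/2-\varepsilon}$ to make it $l^{2+}$-summable over the integers) and use the remaining two denominators to cancel $\langle\xi_1\rangle$ and the factor $\langle\xi_2\rangle^s$, leaving a uniformly bounded multiplier.

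Once the multiplier is reduced to the $l^\infty$ of $(\xi,\xi_1,\xi_2,\xi_3)$ times a single factor $|\xi-\xi_j|^{-1/2-\varepsilon}$ (attached to the frequency variable of one of the three inputs), the estimate closes exactly as in Lemma~\ref{tpmap1}: by Plancherel and Hölder in $x\in\mathbf{T}$,
\[
\Big\|\sum_{\xi_1+\xi_2+\xi_3=\xi}|\widehat u|(\xi_1)|\widehat v|(\xi_2)\,\frac{|\widehat w|(\xi_3)}{|\xi-\xi_1-\xi_2|^{1/2+\varepsilon}}\Big\|_{l^2_\xi}\lesssim \big\|\mathcal F^{-1}[|\widehat u|]\big\|_{L^2}\big\|\mathcal F^{-1}[|\widehat v|]\big\|_{L^2}\big\||\nabla|^{-1/2-\varepsilon}\mathcal F^{-1}[|\widehat w|]\big\|_{L^\infty},
\]
and $\||\nabla|^{-1/2-\varepsilon}g\|_{L^\infty(\mathbf{T})}\lesssim \|g\|_{L^2}$ finishes the job.

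\textbf{Anticipated difficulty.} The main obstacle is the case where all three $|\xi_j|$ are comparable and simultaneously one pair-sum is $O(1)$ -- a near-resonant configuration in which $\langle\xi\rangle$ cannot be usefully dominated by any single $\langle\xi_j\rangle^{1-s}$ alone. The resolution is combinatorial: of the three integers $\xi-\xi_j$ (which sum to $2\xi$), at most one can be $O(1)$, forcing the other two to be $\gtrsim\langle\xi\rangle$, which gives a surplus sufficient to absorb $\langle\xi\rangle^{1-s}$ and the factors $\langle\xi_i\rangle^s$. The hypothesis $s<1/2$ is used precisely at this point to guarantee strict positivity of the leftover margin after the bookkeeping.
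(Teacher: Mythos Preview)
Your proposal has two genuine gaps.

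\textbf{The combinatorial claim is false.} You assert that of the three nonzero integers $\xi-\xi_j$ (which sum to $2\xi$), at most one can be $O(1)$. Take $\xi_1=\xi_2=N$ and $\xi_3=1-N$, so $\xi=N+1$; the pair-sums are $\xi_2+\xi_3=1$, $\xi_3+\xi_1=1$, $\xi_1+\xi_2=2N$, and all non-resonance constraints hold for $N\ge 2$. Two pair-sums equal $1$ while only one is $\sim N$. In this configuration your intermediate bound reads
\[
m\;\lesssim\;\frac{\langle\xi_1\rangle\,\langle\xi_2\rangle^s}{|\xi-\xi_1|\,|\xi-\xi_2|\,|\xi-\xi_3|}
\;=\;\frac{N\cdot N^{s}}{1\cdot 1\cdot 2N}\;\sim\; N^{s},
\]
which diverges. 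The culprit is that you discarded $\langle\xi_3\rangle^s/|\xi_3|$ as $\lesssim 1$, throwing away a factor $\sim N^{s-1}$ that is exactly what makes the true multiplier bounded here (indeed $m\sim N^{2s-1}$, and this is where $s<1/2$ is genuinely used). The paper handles this ``all frequencies comparable'' regime separately (its Case~1): it keeps the factor $\langle\xi_3\rangle^{-(1-s)}$, uses $|(\xi_1+\xi_2)(\xi_2+\xi_3)(\xi_3+\xi_1)|\gtrsim|\xi|\cdot\min_{i\ne j}|\xi_i+\xi_j|^2$, and is left with a weight $\langle\xi_i+\xi_j\rangle^{-2}$ on the minimal pair-sum.

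\textbf{The closing step fails for three factors.} Even granting a bounded multiplier times a single weight $|\xi_3|^{-1/2-\varepsilon}$, your displayed H\"older step amounts to
\[
\bigl\|\,U\,V\,W\,\bigr\|_{L^2(\mathbf T)}\;\lesssim\; \|U\|_{L^2}\,\|V\|_{L^2}\,\|W\|_{L^\infty},
\]
which is false (take $U=V$ and $W\equiv 1$: the left side is $\|U\|_{L^4}^2$). In Lemma~\ref{tpmap1} the analogous step is \emph{bilinear}, so one $L^\infty$ factor suffices; trilinearly you need either two summable weights (so two inputs can be placed in $L^\infty$) or a single weight with exponent strictly greater than $1$ so that Young's inequality in $l^1$ closes the convolution. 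The paper does precisely this: in Case~1 the leftover $\langle\xi_i+\xi_j\rangle^{-2}$ is $l^1$-summable and Young's inequality yields $l^2\cdot l^1\cdot l^2$; in Case~2 two separate decaying weights $\langle\xi\rangle^{-1+s}$ and $\langle\xi-\xi_1\rangle^{-1}$ are extracted and the estimate is closed by iterated Young/H\"older in sequence spaces.
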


\begin{proof}
Let $u,v,w\in C^{\infty}(\bt)$.  Then $\|J(u,v,w)\|_{H^1}\sim $
\[
\n{\sum_{\tiny \begin{array}{c} (\xi_1 + \xi_2)(\xi_2+ \xi_3)(\xi_3+\xi_1)\neq 0\\ \xi_1 + \xi_2 + \xi_3 = \xi, \qquad \xi_j \neq 0\end{array}} \f{\lan{\xi}^{1-s} \lan{\xi_1}^s \lan{\xi_2}^s \lan{\xi_3}^s}{i\xi_3 (\xi_1 + \xi_2)(\xi_2+ \xi_3)(\xi_3+\xi_1)} \wh{u}(\xi_1) \wh{v}(\xi_2) \wh{w}(\xi_3)}{l^2_{\xi}}.
\]
Note that the above expression is symmetric in $\xi_1$, $\xi_2$ and also that the estimate will be easier when $|\xi_3| \sim \max(|\xi|, |\xi_1|, |\xi_2|, |\xi_3|)$.  Thus, we can assume without loss of generality that $|\xi|\sim |\xi_1| \gtrsim \max(|\xi_3|, |\xi_2|)$.  Then, assuming that the summation is restricted to the set $\{(\xi_1, \xi_2,\xi_3)\in \mathbf{Z}^3: (\xi_1 + \xi_2)(\xi_2+ \xi_3)(\xi_3+\xi_1)\neq 0,\, \xi_1 + \xi_2 + \xi_3 = \xi,\, \xi_1\xi_2\xi_3 \neq 0\}$
\[
\|J(u,v,w)\|_{H^1} \lesssim \n{\sum_{|\xi|\sim |\xi_1| \gtrsim \max(|\xi_3|, |\xi_2|)} \f{\lan{\xi}^{1} \lan{\xi_2}^s |\wh{u}|(\xi_1) |\wh{v}|(\xi_2) |\wh{w}|(\xi_3)}{\lan{\xi_3}^{1-s} |\xi_1 + \xi_2||\xi_2+ \xi_3||\xi_3+\xi_1|} }{l^2_{\xi}}.
\]
We split the sum into two pieces.

\begin{itemize}
\item[Case 1.] $\xi_1 \sim \xi_2 \sim \xi_3 \sim \xi$.  Note that in this case 
\begin{equation}\label{eq:xigain}
|\xi_1 + \xi_2||\xi_2+ \xi_3||\xi_3+\xi_1| \gtrsim |\xi| \min(|\xi_1 + \xi_2|, |\xi_2+ \xi_3|, |\xi_3+ \xi_1|)^2.
\end{equation}
Splitting the sum again into three pieces, we apply Young's and H\"older's inequalities to obtain
\begin{align*}
&\n{\sum_{|\xi_1 + \xi_2| \leq |\xi_2+\xi_3|, |\xi_3+\xi_1|} \lan{\xi_1 + \xi_2}^{-2}|\wh{u}|(\xi_1) |\wh{v}|(\xi_2) |\wh{w}|(\xi_3) }{l^2_{\xi}}\\
 &\hspace{30pt}\lesssim \n{ \sum_{\xi_3\in \mathbf{Z}} |\wh{w}|(\xi_3) \lan{\xi-\xi_3}^{-2} \left[ |\wh{u}|*_{\xi_1} |\wh{v}|\right](\xi-\xi_3)}{l^2_{\xi}}\\
&\hspace{30pt}\lesssim \n{ |\wh{w}|}{l^2_{\xi}} \n{ \lan{\xi}^{-2} \left[ |\wh{u}|*_{\xi_1} |\wh{v}|\right]}{l^1_{\xi}} \lesssim \n{\lan{\xi}^{-2}}{l^1_{\xi}} \n{\wh{w}}{l^2_{\xi}}  \n{\wh{u}}{l^2_{\xi}} \n{\wh{v}}{l^{2}_{\xi}}.
\end{align*}

\item[Case 2.]  If $\xi\sim \xi_1 \gg \xi_3$, then
\begin{equation}\label{eq:xisquaregain}
|\xi_1 + \xi_2||\xi_2+ \xi_3||\xi_3+\xi_1| \gtrsim |\xi|^2 \min(|\xi_1 + \xi_2|,\, |\xi_2+ \xi_3|).
\end{equation}
By splitting the sum, and we can assume $|\xi_1 + \xi_2|\geq |\xi_2+ \xi_3|$.  Apply H\"older's and Young's inequalities to obtain
\begin{align*}
&\n{\lan{\xi}^{-1+s}\sum_{\xi_1\in \mathbf{Z}}|\wh{u}|(\xi_1)\lan{\xi-\xi_1}^{-1} \left[ |\wh{v}|*_{\xi_2} |\wh{w}|\right](\xi-\xi_1)\ }{l^2_{\xi}}\\
&\hspace{30pt}\lesssim \n{\xi^{-1+s}}{l^2_{\xi}} \n{ \wh{u}}{l^2_{\xi}} \n{\lan{\xi}^{-1} \left[ |\wh{v}|* |\wh{w}|\right]}{l^2_{\xi}}\\
&\hspace{30pt}\lesssim  \n{\xi^{-1+s}}{l^2_{\xi}} \n{\lan{\xi}^{-1}}{l^2_{\xi}} \n{ \wh{u}}{l^2_{\xi}}\n{  |\wh{v}|* |\wh{w}|}{l^{\infty}_{\xi}} \lesssim_{s} \n{ \wh{u}}{l^2_{\xi}}\n{ \wh{v}}{l^2_{\xi}}\n{ \wh{w}}{l^2_{\xi}}.
\end{align*}
\end{itemize}
This concludes the proof.
\end{proof}

The following lemma gives the appropriate space for $k$.

\begin{lemma}\label{le:kspace}
Let $u,v,w \in X^{0,1+2\de}$ for $0<10\de<1-2s$.  Then
\[
\|J(u,v,w)\|_{X^{0,\f{1+\de}{2}}} \lesssim_{\de} \|u\|_{X^{0,1+2\de}} \|v\|_{X^{0,1+2\de}}\|w\|_{X^{0,1+2\de}}.
\]
\end{lemma}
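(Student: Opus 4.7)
The plan is to combine duality with the cubic algebraic identity \eqref{eq:cubic} to distribute the modulation weight. By duality it suffices to bound
\[
I := \int_{\mathbf{R}} \sum_{\xi \in \mathbf{Z}} \lan{\tau-\xi^3}^{(1+\de)/2}\, \wt{J(u,v,w)}(\tau,\xi)\,\overline{g(\tau,\xi)}\, d\tau
\]
by $\|g\|_{L^2}$ times the three $X^{0,1+2\de}$ norms. Writing $\wt{J(u,v,w)}$ explicitly as a convolution over $\tau_1+\tau_2+\tau_3=\tau$, $\xi_1+\xi_2+\xi_3=\xi$ on the non-resonant set $(\xi_1+\xi_2)(\xi_2+\xi_3)(\xi_3+\xi_1)\neq 0$, with multiplier $M/[i\xi_3(\xi_1+\xi_2)(\xi_2+\xi_3)(\xi_3+\xi_1)]$ where $M := \lan{\xi_1}^s\lan{\xi_2}^s\lan{\xi_3}^s/\lan{\xi}^s$, the identity \eqref{eq:cubic} gives the pointwise bound
\[
\lan{\tau-\xi^3}^{(1+\de)/2} \lesssim \sum_{j=1}^{3}\lan{\tau_j-\xi_j^3}^{(1+\de)/2} + |(\xi_1+\xi_2)(\xi_2+\xi_3)(\xi_3+\xi_1)|^{(1+\de)/2},
\]
which decomposes $I$ into four sub-cases.

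In the \emph{resonance sub-case}, the non-resonance magnitude cancels most of the $J$-denominator, leaving the effective multiplier
\[
\mathcal{M}_{\mathrm{res}}(\xi_1,\xi_2,\xi_3) := \f{\lan{\xi_1}^s\lan{\xi_2}^s\lan{\xi_3}^s}{\lan{\xi}^s\,|\xi_3|\,|(\xi_1+\xi_2)(\xi_2+\xi_3)(\xi_3+\xi_1)|^{(1-\de)/2}}.
\]
Running the same frequency case split as in the proof of Lemma~\ref{le:jmap} --- using that on the non-resonant set each factor $|\xi_i+\xi_j|\geq 1$, and that if two factors are $O(1)$ the third must be $\sim|\xi|$ --- one checks that $\mathcal{M}_{\mathrm{res}}$ is uniformly bounded, in fact carrying a small negative power of $\lan{\xi}$, under the standing hypothesis $10\de<1-2s$. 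Cauchy--Schwarz on the convolution then controls this contribution by $\|u\|_{L^2_{t,x}}\|v\|_{L^2_{t,x}}\|w\|_{L^2_{t,x}}\|g\|_{L^2}$, dominated a fortiori by the three $X^{0,1+2\de}$ norms.

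In each \emph{modulation sub-case} --- say the one arising from $\lan{\tau_1-\xi_1^3}^{(1+\de)/2}$ --- we absorb the weight into $\wt{u}$, setting $U(\tau,\xi):=\lan{\tau-\xi^3}^{(1+\de)/2}\wt{u}(\tau,\xi)$, noting $\|U\|_{L^2}\leq\|u\|_{X^{0,(1+\de)/2}}\leq\|u\|_{X^{0,1+2\de}}$ and that $U$ retains modulation weight $(1+3\de)/2 > 1/2{+}$. The residual $J$-multiplier $M/[\xi_3\prod(\xi_i+\xi_j)]$ is bounded on the non-resonant set --- and indeed gains a positive power $\lan{\xi}^{-\mu}$ for some $\mu(s)>0$ --- by the same frequency case split, since for $s<1/2$ the numerator grows at worst like $\lan{\xi}^{2s}$ while the denominator provides at least one full power of $\lan{\xi}$ combined with the factor $|\xi_3|$. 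Transferring back to physical space via Plancherel, a H\"older estimate $\|FGH\|_{L^2_{t,x}}\lesssim\|F\|_{L^6_{t,x}}\|G\|_{L^6_{t,x}}\|H\|_{L^6_{t,x}}$ combined with the Bourgain embedding \eqref{xsb2} closes the estimate, with the small derivative loss in \eqref{xsb2} absorbed by the multiplier's $\lan{\xi}^{-\mu}$ gain and the $(1/2{+})$ modulation slack on $U, v, w$. The other two modulation sub-cases are analogous --- the symmetry of $J$ in its first two slots handles the $\xi_2$-modulation case, and the case on $w$ differs only in that $|\xi_3|$ already sits in the denominator.

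The main technical obstacle is the boundedness of $\mathcal{M}_{\mathrm{res}}$ in the borderline configurations where all $|\xi_j|\sim|\xi|$ and two factors $(\xi_i+\xi_j)$ are $O(1)$, which the non-resonant constraint does not preclude. There the third factor is forced to be $\sim|\xi|$, and the combined gain $(1-\de)/2$ on the non-resonance, $|\xi_3|^{-1}\sim\lan{\xi}^{-1}$ from the denominator, and $\lan{\xi}^{-s}$ from $1/\lan{\xi}^s$ must absorb a numerator capped at $\lan{\xi}^{3s}$. A direct computation gives a net exponent at most $2s-3/2+\de/2$, which is negative precisely because $10\de<1-2s$; this is the sole place the quantitative relation between $s$ and $\de$ is used, and the rest of the proof is routine book-keeping paralleling Lemma~\ref{le:jmap}.
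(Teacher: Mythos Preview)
Your overall strategy matches the paper's: split via the cubic identity \eqref{eq:cubic} and close with the $L^6$ embedding \eqref{xsb2}. The modulation sub-cases are handled correctly and are essentially the paper's duality argument in a slightly repackaged form.

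There is, however, a genuine gap in your \emph{resonance sub-case}. The assertion that ``Cauchy--Schwarz on the convolution then controls this contribution by $\|u\|_{L^2_{t,x}}\|v\|_{L^2_{t,x}}\|w\|_{L^2_{t,x}}\|g\|_{L^2}$'' is false. Once $\mathcal{M}_{\mathrm{res}}$ is bounded, Cauchy--Schwarz in $(\tau,\xi)$ only yields
\[
\|g\|_{L^2}\,\|\wt{u}*\wt{v}*\wt{w}\|_{L^2_{\tau,\xi}} = \|g\|_{L^2}\,\|uvw\|_{L^2_{t,x}},
\]
and there is no estimate $\|uvw\|_{L^2_{t,x}}\lesssim \|u\|_{L^2}\|v\|_{L^2}\|w\|_{L^2}$; the multiplier carries no decay in the $\tau$-variables, so the frequency-only argument of Lemma~\ref{le:jmap} does not transfer. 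You must invoke \eqref{xsb2} here exactly as you do in the modulation sub-cases: bound $\|uvw\|_{L^2_{t,x}}\le \|u\|_{L^6_{t,x}}\|v\|_{L^6_{t,x}}\|w\|_{L^6_{t,x}}\lesssim \prod\|{\cdot}\|_{X^{\ve,\frac12+\de}}$, and then check that the $\ve$-losses are absorbed by the slack in $\mathcal{M}_{\mathrm{res}}$. Your own borderline computation gives a net exponent $2s-\tfrac32+\tfrac{\de}{2}<-\tfrac12+\tfrac{\de}{2}$, so there is ample room for three $\ve$-losses; but as written the step is missing and the claimed inequality is incorrect. This is precisely what the paper does in its first case ($L\lesssim |(\xi_1+\xi_2)(\xi_2+\xi_3)(\xi_3+\xi_1)|$), where the $L^6$ embedding is applied and the multiplier bound \eqref{eq:mbound} accounts for the $\de$-losses.
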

\textbf{Remark:} It is likely that the bounded property $J: \left(X^{0,\f{1}{2}}\cap \wt{l^2_{\xi} L^1_{\tau}}\right)^3 \to X^{0,\f{1}{2}}$ also holds.  But Lemma~\ref{le:kspace} will suffice for our purpose since the only argument of $J(\cdot, \cdot,\cdot)$ is $R[f]$, which is smooth in the modulation frequency as stated in Lemma~\ref{rffree}.  

In fact, the proof will imply that
\begin{equation}\label{eq:jspace}
\|J(R[f],R[f],R[f])\|_{X_T^{0, \f{1+\de}{2}}} \lesssim \|R[f]\|_{X_T^{0,1+2\de}}\|R[f]\|^2_{X_T^{0,\f{1}{2}+\de}}.
\end{equation}

Considering \eqref{xsb1} and Lemma~\ref{rffree}, we can interpolate above with Lemma~\ref{le:jmap} to obtain $k \in X_T^{1-\f{2b}{1+\de},\, b}$ for all $b\in [0,\f{1+\de}{2}]$ for a fixed $\de>0$ small.

\begin{proof}
Recall that $\|J(u,v,w)\|_{X^{0,\f{1+\de}{2}}} = $
\begin{equation*}
\n{\sum_{\tiny \begin{array}{c} (\xi_1 + \xi_2)(\xi_2+ \xi_3)(\xi_3+\xi_1)\neq 0\\ \xi_1 + \xi_2 + \xi_3 = \xi, \qquad \xi_j \neq 0\end{array}} \f{\lan{\tau-\xi^3}^{\f{1+\de}{2}} \lan{\xi_1}^s \lan{\xi_2}^s \lan{\xi_3}^s}{i\xi_3 \lan{\xi}^{s}(\xi_1 + \xi_2)(\xi_2+ \xi_3)(\xi_3+\xi_1)} \wh{u}(\xi_1) \wh{v}(\xi_2) \wh{w}(\xi_3)}{L^2_{\tau} l^2_{\xi}}.
\end{equation*}

For this trilinear estimate, we use the embedding \eqref{xsb2}.
First we localize each variable in terms of its dispersive frequencies, i.e. $\langle \tau_j- \xi_j^3\rangle \sim L_j$ for $j=1,2,3$ and $\langle \tau-\xi^3\rangle \sim L$, where $L, L_j\gtrsim 1$ are dyadic indices.  We only need to insure that the final estimate includes $L_{\max}^{-\varepsilon}$ for some $\varepsilon>0$ so that sum in these indices and also gain a small positive power of $T$ if necessary.
 
Consider the algebraic identity~\eqref{eq:cubic}.  Note that we must have either that $L \lesssim |\xi_1+ \xi_2||\xi_2+\xi_3||\xi_3+\xi_1|$ or $L \lesssim L_{j}$ for some $j=1,2,3$. 

First, we consider when $L \lesssim |\xi_1+ \xi_2||\xi_2+\xi_3||\xi_3+\xi_1|$. Apply Plancherel and H\"older followed by \eqref{xsb2} to obtain

\begin{align*}
\|J(u,v,w)\|_{X^{0,\f{1+\de}{2}}} &\lesssim M \| \widetilde{u_{-\delta}} * ( \widetilde{v_{-\delta}} * \widetilde{w_{-\delta}})\|_{L^2_{\tau}l^2_{\xi}} \sim M \|  u_{-\delta}v_{-\delta}w_{-\delta}\|_{L^2_{t,x}}\\
	 &\lesssim M \| u_{-\delta}\|_{L^6_{t,x}}  \| v_{-\delta}\|_{L^6_{t,x}}  \| w_{-\delta}\|_{L^6_{t,x}}  \lesssim_{\delta} M \|v\|_{X^{0,\frac{1}{2}+\de}}\|v\|_{X^{0,\frac{1}{2}+\de}}\|v\|_{X^{0,\frac{1}{2}+\de}}
\end{align*}
where $\widetilde{v_{-\delta}} (\tau,\xi) := \langle \xi \rangle^{-\delta} \lan{\tau-\xi^3}^{\de/2} |\widetilde{v}|(\tau,\xi)$ and 
\begin{equation*}
M := \sup_{\tiny \begin{array}{c}(\xi_1+\xi_2)(\xi_2+\xi_3)(\xi_3+\xi_1)\neq 0\\ \xi_1+\xi_2+\xi_3=\xi,\quad \xi_j \neq 0 \end{array}} \frac{\lan{\xi_1}^{s+\de} \lan{\xi_2}^{s+\de} }{L_{\max}^{\f{\de}{2}} \lan{\xi_3}^{1-s-\de}\lan{\xi}^{s} |(\xi_1 + \xi_2)(\xi_2+ \xi_3)(\xi_3+\xi_1)|^{\f{1-2\de}{2}}}.
\end{equation*}

Thus, it suffices to show that there exists an absolute upper bound $C$ for
\begin{equation}\label{eq:mbound}
\frac{\lan{\xi_1}^{s+\de} \lan{\xi_2}^{s+\de} }{\lan{\xi_3}^{1-s-\de}\lan{\xi}^{s} |(\xi_1 + \xi_2)(\xi_2+ \xi_3)(\xi_3+\xi_1)|^{\f{1-2\de}{2}}}
\end{equation}
for all $\xi\in \mathbf{Z}\setminus \{0\}$ and $\xi_j$ for $j=1,2,3$ satisfying the same restrictions as before.

As observed in \eqref{eq:xigain} and \eqref{eq:xisquaregain}, the weight $(\xi_1 + \xi_2)(\xi_2+ \xi_3)(\xi_3+\xi_1)$ gains $\xi_{\max}^2$ most of the time, unless $\xi_1 \sim \xi_2 \sim \xi_3$ in which case it still gains $\xi_{\max}$.

If $\xi_1 \sim \xi_2 \sim \xi_3$, then \eqref{eq:mbound} is bounded by $C\xi_{\max}^{3s+4\de -\f{3}{2}}$.  Thus letting $4\de < \f{3}{2} - 3s$ suffice to bound \eqref{eq:mbound}.  Otherwise, \eqref{eq:mbound} is bounded by $C\xi_{\max}^{2s+3\de - 1}$, which is bounded if $3\de \leq 1-2s$.

Now, it remains to estimate the case $L \gg |\xi_1+\xi_2| |\xi_2 + \xi_3| |\xi_3+ \xi_1|$.  As noted previously, this forces $L \lesssim L_j$ for some $j=1,2,3$ due to \eqref{eq:cubic}.  Without loss of generality, assume $L \lesssim L_1=L_{\max}$. 
\begin{align*}
\|J(u,v,w)\|_{X^{0,\f{1+\de}{2}}} &\lesssim M' \| \lan{\xi}^{-\de} L^{\f{1+\de}{2}}  \widetilde{u_{-\delta}} * ( \widetilde{v_{-\delta}} * \widetilde{w_{-\delta}})\|_{L^2_{\tau}l^2_{\xi}}\\
	&\hspace{-50pt}\sim M'  L_1^{1+\de} L^{-\f{1+\de}{2}} \sup_{\|z\|_{L^2_{\tau}l^2_{\xi}}=1} \left| \int_{\tau_1+\tau_2+\tau_3=\tau} \sum_{\xi_1+\xi_2+\xi_3=\xi} \widetilde{u_{-\de}}\, \widetilde{v_{-\de}}\, \widetilde{w_{-\de}}\, \lan{\xi}^{-\de} z(\tau,\xi) d\sigma\right| \\
	&\hspace{-50pt} \lesssim M' L_1^{1+\de}\|u\|_{L^2_t l^2_{\tau}} \sup_{\|z\|_{L^2_{\tau}l^2_{\xi}}=1} \n{ (L^{-\f{1+\de}{2}}\lan{\cdot}^{-\de}z) * ( \widetilde{v_{-\delta}}*\widetilde{w_{-\delta}})}{L^2_{\tau} l^2_{\xi}}\\
	&\hspace{-50pt}\lesssim M' L_{\max}^{-\de} \|u\|_{X^{0,1+2\delta}} \sup_{\|z\|_{L^2_{\tau}l^2_{\xi}}=1} \n{\mathcal{F}_{\tau,\xi}^{-1} \left[\lan{\tau-\xi^3}^{-\frac{1+\de}{2}}\langle \xi \rangle^{-\delta}z\right] v_{-\delta} w_{-\delta}}{L^2_{t,x}}\\
	&\hspace{-50pt}\lesssim_{\delta} M' \|u\|_{X^{0,1+\delta}}\|v\|_{X^{0,\frac{1}{2}+\delta}}\|w\|_{X^{0,\frac{1}{2}+\delta}}
\end{align*}
where we have used H\"older and \eqref{xsb2} for the last inequality, and
\[
M' := \sup_{\tiny \begin{array}{c}(\xi_1+\xi_2)(\xi_2+\xi_3)(\xi_3+\xi_1)\neq 0\\ \xi_1+\xi_2+\xi_3=\xi,\quad \xi_j \neq 0 \end{array}} \frac{\lan{\xi_1}^{s+\de} \lan{\xi_2}^{s+\de} }{ \lan{\xi_3}^{1-s-\de}\lan{\xi}^{s-\de} |(\xi_1 + \xi_2)(\xi_2+ \xi_3)(\xi_3+\xi_1)|}.
\]
It is easy to see that $M' \lesssim \eqref{eq:mbound}$, so we obtain the desired estimate.
\end{proof}

Finally, based on Lemmas~\ref{rffree}-\ref{le:kspace}, we assign  appropriate function spaces for $R[f]$, $h$, $k$ as follows:
\[
R[f] \in X^{0,1+2\de}_T; \qquad h \in C^0_t H^1_x \cap \left(\cap_{b\in [0,\f{1}{2}]} X_T^{1-2b, b}\right); \qquad k \in \cap_{b\in [0,\f{1+\de}{2}]} X_T^{1-\f{2b}{1+\de}, b}
\]
where $0<\de <(1-2s)/10$ is fixed.  Note that the function space for $k$ continuously embeds in the space for $h$, i.e. $k$ is more regular than $h$.  In our multilinear estimate, we will assume both $h,k \in X^{1, 0} \cap X^{0,\f{1}{2}}$ and $R[f]\in X^{0,\f{1}{2}}$ to reduce the number of necessary estimates.

\subsection{Estimates for the non-linearities.}\label{sec:nonlinearity}
Consider the IVP~\eqref{eqwper}.  We want to give a contraction argument for $w$ in $Y^{\ga, \f{1}{2}+\de}$.  First consider the nonlinearity~$\mathcal{NR}$.  The estimates \eqref{eq:wff}-\eqref{eq:ffh} will be useful in managing the terms \eqref{eq:nr1}-\eqref{eq:nr4} respectively.

\begin{lemma}\label{nonres}
Let $u,v,w$ be in appropriate function spaces.  Given any $0\leq s <1/2$ and $0<10\de < 1-2s$, we have for all $0<\ga <1-10\de$
\begin{align}
\n{\mathcal{F}_{\xi}^{-1}[\mathcal{NR}(u,v,w)] }{X_T^{\ga,-\frac{1}{2}+\delta}} &\lesssim_{\delta} T^{\de} \|u\|_{X^{\ga,\f{1}{2}+\de}} \|v\|_{X^{0,\frac{1}{2}}} \|w\|_{X^{0,\f{1}{2}}};\label{eq:wff}\\
\n{\mathcal{F}_{\xi}^{-1}[\mathcal{NR}(u,v,w)] }{X_T^{\ga,-\frac{1}{2}+\delta}} &\lesssim_{\delta} T^{\de} \|u\|_{X^{1, 0} \cap X^{0,\f{1}{2}}} \|v\|_{X^{0,\frac{1}{2}}} \|w\|_{X^{0,\f{1}{2}}}; \label{eq:hff}\\
\n{\mathcal{F}_{\xi}^{-1}[\mathcal{NR}(u,v,w)] }{X_T^{\ga,-\frac{1}{2}+\delta}} &\lesssim_{\delta} T^{\de} \|u\|_{X^{0,\f{1}{2}}} \|v\|_{X^{0,\frac{1}{2}}} \|w\|_{X^{\ga,\f{1}{2}+\de}};\label{eq:ffw}\\
\n{\mathcal{F}_{\xi}^{-1}[\mathcal{NR}(u,v,w)] }{X_T^{\ga,-\frac{1}{2}+\delta}} &\lesssim_{\delta} T^{\de} \|u\|_{X^{0,\f{1}{2}}} \|v\|_{X^{0,\frac{1}{2}}} \|w\|_{X^{1, 0} \cap X^{0,\f{1}{2}}}.\label{eq:ffh}
\end{align}
\end{lemma}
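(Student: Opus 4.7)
My plan is to prove the four inequalities in parallel, using the standard trilinear-form machinery for periodic KdV. First, by Proposition~\ref{timeloc} I pull out a $T^\delta$ factor by trading $\lan{\tau-\xi^3}^\delta$ for $T^\delta$, reducing each inequality to the analogous bound in $X^{\gamma,-\f{1}{2}+2\delta}$ without the $T^\delta$ on the right. Next, dualizing each such $X^{\gamma,-\f{1}{2}+2\delta}$ norm against $z$ with $\|z\|_{L^2_\tau l^2_\xi}=1$, the estimate is reformulated as the quadrilinear form bound
\[
I := \sum_{\xi_1+\xi_2+\xi_3=\xi}\int_{\tau_1+\tau_2+\tau_3=\tau}\f{\lan{\xi}^{\gamma-s}\lan{\xi_1}^s\lan{\xi_2}^s\lan{\xi_3}^s}{|\xi_3|\lan{\tau-\xi^3}^{\f{1}{2}-2\delta}}\,|\wt{u}||\wt{v}||\wt{w}||z|
\]
(summed over the non-resonant set) against the appropriate product of $X^{s,b}$ norms of $u,v,w$.

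The central step is the cubic resonance identity \eqref{eq:cubic}, which yields
\[
|(\xi_1+\xi_2)(\xi_2+\xi_3)(\xi_3+\xi_1)|^{\f{1}{2}-2\delta} \lesssim L^{\f{1}{2}-2\delta} + \sum_{j=1}^{3} L_j^{\f{1}{2}-2\delta},\qquad L:=\lan{\tau-\xi^3},\ L_j:=\lan{\tau_j-\xi_j^3}.
\]
Multiply and divide $I$ by $|(\xi_1+\xi_2)(\xi_2+\xi_3)(\xi_3+\xi_1)|^{\f{1}{2}-2\delta}$ and split into four sub-regions according to which term dominates on the right. In the $L$-sub-region the factor $L^{\f{1}{2}-2\delta}$ cancels the denominator $\lan{\tau-\xi^3}^{\f{1}{2}-2\delta}$; in the $L_j$-sub-region the factor $L_j^{\f{1}{2}-2\delta}$ is peeled from the $j$-th function's own modulation weight (which is $\f{1}{2}$ or $\f{1}{2}+\delta$), leaving $z$-side modulation available for Strichartz. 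In every sub-region, the residual purely spatial symbol is
\[
M := \f{\lan{\xi}^{\gamma-s}\lan{\xi_1}^s\lan{\xi_2}^s\lan{\xi_3}^s}{|\xi_3|\cdot |(\xi_1+\xi_2)(\xi_2+\xi_3)(\xi_3+\xi_1)|^{\f{1}{2}-2\delta}}.
\]
To bound $M$ uniformly I distribute $\lan{\xi}^\gamma\lesssim\sum_j\lan{\xi_j}^\gamma$: when $\lan{\xi_1}^\gamma$ dominates it is absorbed into $u$'s Sobolev weight (directly for \eqref{eq:wff}, and for \eqref{eq:hff},\eqref{eq:ffh} via interpolation of $X^{1,0}\cap X^{0,\f{1}{2}}$ into $X^{\gamma,b}$), reducing $M$ to precisely the multiplier \eqref{eq:mbound} of Lemma~\ref{le:kspace}, uniformly bounded under $10\delta<1-2s$. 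When $\lan{\xi_2}^\gamma$ or $\lan{\xi_3}^\gamma$ dominates, the imbalanced regime \eqref{eq:xisquaregain} provides a cubic gain $|\xi_{\max}|^{1-4\delta}$ exceeding $\lan{\xi_{\max}}^\gamma$ since $\gamma<1-10\delta$, and $M$ again reduces to the bounded \eqref{eq:mbound}.

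Once $M$ is uniformly bounded, Plancherel reduces $I$ to an $L^1_{t,x}$ quadruple product, closed by H\"older together with the $L^4$ and $L^6$ Strichartz embeddings \eqref{xsb3}, \eqref{xsb2}. Factors enjoying $X^{\varepsilon,\f{1}{2}+\delta}$ modulation go into $L^6_{t,x}$; factors carrying only $X^{0,\f{1}{2}}$ go into $L^4_{t,x}$ via \eqref{xsb3} after paying $\lan{\xi_j}^{-\varepsilon}$ from $M$ (which remains bounded). The $z$-side modulation weight generated in the $L_j$-sub-regions places $z$ in the appropriate Strichartz space, balancing all H\"older exponents. The main obstacle is the uniform bound on $M$: the singularity $|\xi_3|^{-1}$ together with the total Sobolev weight $\lan{\xi}^{\gamma-s}\prod_j\lan{\xi_j}^s$ must be compensated by the cubic-pair gain $|(\xi_1+\xi_2)(\xi_2+\xi_3)(\xi_3+\xi_1)|^{\f{1}{2}-2\delta}$ across both the balanced regime \eqref{eq:xigain} and the imbalanced regime \eqref{eq:xisquaregain}, which is delicate and precisely what forces the restriction $\gamma<1-10\delta$.
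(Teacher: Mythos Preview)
Your overall scheme---extract $T^\delta$ via Proposition~\ref{timeloc}, dualize, invoke the cubic resonance identity to split by which of $L,L_1,L_2,L_3$ is maximal, reduce to a bounded spatial multiplier, then close with Strichartz---is the same machinery the paper uses, and for \eqref{eq:wff} and \eqref{eq:ffw} your sketch is essentially correct.

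There is a genuine gap in your treatment of \eqref{eq:hff} and \eqref{eq:ffh}.  You propose to absorb $\lan{\xi_1}^\gamma$ into $u$ ``via interpolation of $X^{1,0}\cap X^{0,\frac12}$ into $X^{\gamma,b}$''.  But interpolation yields only $X^{\gamma,(1-\gamma)/2}$, so in the sub-region $L_1=L_{\max}$ you would need to peel $L_1^{\frac12-2\delta}$ from $u$ while simultaneously keeping $\lan{\xi_1}^\gamma$ on it---and $(1-\gamma)/2<\frac12-2\delta$ as soon as $\gamma>4\delta$.  You cannot draw both the full spatial weight and the full modulation weight from the intersection $X^{1,0}\cap X^{0,\frac12}$ in the same sub-region.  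The paper resolves this differently: it splits \eqref{eq:hff} into the regime $L_{\max}\gtrsim\xi_{\max}^2\min(|\xi_i+\xi_j|)$, where the pair gain alone is already strong enough to control $\lan{\xi}^{\gamma}$ without touching $u$'s spatial weight (so one uses the $X^{0,\frac12}$ part), and the complementary regime, which forces $\xi\sim\xi_1\sim\xi_2\sim\xi_3$, where $u$ is placed in $L^2_{t,x}$ via the $X^{1,0}$ part and no modulation from $u$ is needed at all.  An equally valid fix, closer to your setup, is simply \emph{not} to absorb $\lan{\xi_1}^\gamma$ in \eqref{eq:hff}: the raw multiplier $M$ is already bounded by $\xi_{\max}^{\gamma-1+O(\delta)}$ in the worst imbalanced case, so the $X^{0,\frac12}$ half of the intersection suffices throughout.

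Two smaller points: your claim that $M$ reduces ``precisely'' to the multiplier \eqref{eq:mbound} of Lemma~\ref{le:kspace} is not literally true---\eqref{eq:mbound} is the symbol of $J$, not $\mathcal{NR}$, and the exponents differ---though the bound does follow by the same case analysis.  Also, your H\"older/Strichartz bookkeeping is underspecified: in the $L_j$-sub-regions the factor whose modulation has been consumed must sit in $L^2_{t,x}$, forcing the remaining three into $L^6_{t,x}$, which in turn requires the paper's $v^{-\delta}$ trick (paying $L_j^{\delta/2}$) to upgrade the $X^{0,\frac12}$ factors to the $X^{\varepsilon,\frac12+\delta}$ needed for \eqref{xsb2}; the $L^4$ route you mention does not balance the exponents here.
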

\textbf{Remark:} Note that one may choose any $\ga<1$ for the estimates above.  However, when one reduces the size of $\de$, one is penalized by shorter contraction time which in turn leads to a larger growth bound.  Also the implicit constant due to \eqref{xsb2} goes to infinity according to Bourgain, \cite{Bour}.  It appears that the growth bound explodes as $\ga \nearrow 1$.

\begin{proof}
Since this is a trilinear estimate, we will need to use the $L^6$~embedding~\eqref{xsb2} in order to maximize our gain.  However, this has the disadvantage when one wants to produce a large power of $T$.  One can either sacrifice the derivative gain to obtain a better polynomial-in-time growth bound or sacrifice the better growth bound to obtain the maximal derivative gain.  The goal of this paper better aligns with the latter path.

As in the proof of Lemma~\ref{le:kspace}, we localize each variable in terms of its modulation frequencies so that $\langle \tau_j- \xi_j^3\rangle \sim L_j$ for $j=1,2,3$ and $\langle \tau-\xi^3\rangle \sim L$, where $L, L_j\gtrsim 1$ are dyadic indices.  We aim to estimate each localized components by $C L_{\max}^{-\de}$ for a constant~$C=C(\de)$.  This will suffice in summing all the component as well as gaining $T^{\de}$ factor (one can accomplish both by picking $\de': 0<10\de <10\de' <1-2s$.  We remark that the constant $10$ in front of $\de$ is somewhat arbitrary and is subject to improvements.

Recall $L_{\max} \gtrsim |(\xi_1+ \xi_2)(\xi_2+\xi_3)(\xi_3+\xi_1)|$ due to the algebraic identity~\eqref{eq:cubic}.  We will use this fact throughout the proof.

\textbf{Proof of \eqref{eq:wff}:}  If $L\sim L_{\max}$, we have $\n{\mathcal{F}_{\xi}^{-1}[\mathcal{NR}(u,v,w)] }{X_T^{\ga,-\frac{1}{2}+\delta}}= $
\begin{align*}
& \n{ \sum_{\tiny \begin{array}{c}(\xi_1+\xi_2)(\xi_2+\xi_3)(\xi_3+\xi_1)\neq 0\\ \xi_1+\xi_2+\xi_3=\xi, \quad \xi_j \neq 0 \end{array} } \frac{\langle \xi_1\rangle^s \langle\xi_2\rangle^s \langle \xi_3\rangle^{s}\langle \xi\rangle^{\ga-s}}{i\xi_3\langle\tau-\xi^3 \rangle^{\frac{1}{2}-\delta}} [\widetilde{u}(\xi_1)*_{\tau} \widetilde{v}(\xi_2)*_{\tau} \widetilde{w}(\xi_3)](\tau)}{L^2_{\tau} l^2_{\xi}(\mathbf{Z}\setminus \{0\})}\\
	&\lesssim_{\delta} M_1 \n{ \left[\lan{\cdot}^{\ga-\de}|\wt{u}|\right] * ( \widetilde{v^{-\delta}} * \widetilde{w^{-\de}})}{L^2_{\tau}l^2_{\xi}} \sim M_1 \n{ \left[\lan{\na}^{\ga-\de} \cf^{-1}_{\tau,\xi}[|\wt{u}|]\right] \, v^{-\de} \,w^{-\de} }{L^2_{t,x}}\\
	&\lesssim M_1 \n{ \left[\lan{\na}^{\ga-\de} \cf^{-1}_{\tau,\xi}[|\wt{u}|]\right] }{L^6_{t,x}}  \| v^{-\de}\|_{L^6_{t,x}} \| w^{-\de}\|_{L^6_{t,x}}\\ &\lesssim_{\delta} M_1\| u\|_{X^{\ga,\f{1}{2}+\de}}  \| v\|_{X^{0,\f{1}{2}}} \| w\|_{X^{0,\f{1}{2}}}
\end{align*}
where $\widetilde{v^{-\delta}} (\tau,\xi) := \langle \xi \rangle^{-\delta} \lan{\tau-\xi^3}^{-\de/2} |\widetilde{v}|(\tau,\xi)$ and 
\begin{align*}
M_1 &:= \sup_{\tiny \begin{array}{c}(\xi_1+\xi_2)(\xi_2+\xi_3)(\xi_3+\xi_1)\neq 0\\ \xi_1+\xi_2+\xi_3=\xi,\quad \xi_j \neq 0 \end{array}} \f{\langle\xi_2\rangle^{s+\delta} \langle \xi_3\rangle^{s+\delta}\langle \xi\rangle^{\ga-s}}{|\xi_3|\lan{\xi_1}^{\ga-s-\de} L_{\max}^{\frac{1}{2}-2\delta}} \\
	&\lesssim \sup_{\tiny \begin{array}{c}(\xi_1+\xi_2)(\xi_2+\xi_3)(\xi_3+\xi_1)\neq 0\\ \xi_1+\xi_2+\xi_3=\xi \end{array}} \frac{L_{\max}^{-\delta} \langle\xi_2\rangle^{s+\delta} \langle \xi\rangle^{\ga-s}}{\lan{\xi_1}^{\ga-s-\de}\lan{\xi_3}^{1-s-\delta} (|\xi_1+ \xi_2||\xi_2+\xi_3||\xi_3+\xi_1|)^{\frac{1}{2}-3\delta} }.
\end{align*}
It now suffices to show that there exists an absolute constant $C$ which bounds
\begin{equation}\label{mpdef}
\frac{\langle\xi_2\rangle^{s+\delta} \langle \xi\rangle^{\ga-s}}{\lan{\xi_1}^{\ga-s-\de}\lan{\xi_3}^{1-s-\delta} (|\xi_1+ \xi_2||\xi_2+\xi_3||\xi_3+\xi_1|)^{\frac{1}{2}-3\delta} }
\end{equation}
for $\xi, \xi_1,\xi_2,\xi_3\in \mathbf{z}$ satisfying $(\xi_1+\xi_2)(\xi_2+\xi_3)(\xi_3+\xi_1)\neq 0$ and $\xi_1+\xi_2+\xi_3=\xi$.

First, consider $|\xi|\sim |\xi_1|\sim |\xi_2| \sim |\xi_3|$, when $|\xi_1+ \xi_2||\xi_2+\xi_3||\xi_3+\xi_1| \gtrsim \xi_{\max}$.  In this case, $\eqref{mpdef} \lesssim \xi_{\max}^{2s+6\de -\f{3}{2}}$ is easily bounded.  Now for the remaining cases, we can assume $|\xi_1+ \xi_2||\xi_2+\xi_3||\xi_3+\xi_1| \gtrsim \xi_{\max}^2$

As expected, the most dangerous scenario is $|\xi|\sim |\xi_2| \gg \max(|\xi_1|,|\xi_3|)$.  In this case, we have $\eqref{mpdef} \lesssim \xi_{\max}^{\ga+7\de-1}$.  \emph{This is the reason for the condition~$\ga<1-10\de$}.  Note that the other cases are strictly nicer and naturally follow from this case.

On the other hand, if $L\ll L_{\max}$, we can reduce it to the first case via an argument similar to one in the proof of Lemma~\ref{le:kspace}.  For example, let $L_2=L_{\max}$.  Using duality, 
\begin{align*}
\n{\mathcal{F}_{\xi}^{-1}[\mathcal{NR}(u,v,w)] }{X^{\ga,-\frac{1}{2}+\delta}} &\lesssim M_1^* L_{\max}^{\f{1}{2}-\de} \n{\frac{\left[\lan{\cdot}^{\ga-\de}|\widetilde{u}|\right]*\left[L_2^{-\f{\de}{2}} |\widetilde{v}|\right] *  \widetilde{w^{\de}}](\tau,\xi)}{\lan{\xi}^{\de}\langle \tau - \xi^3\rangle^{\frac{1}{2}-\delta}}}{L^2_{\tau} l^2_{\xi}}\\
	&\hspace{-120pt}\lesssim M_1^* L_{2}^{\f{1-3\de}{2}}  \sup_{\|\wt{z}\|_{L^2_{\tau}l^2_{\xi}}=1} \left| \int_{\tau_1+\tau_2+\tau_3=\tau} \sum_{\xi_1+\xi_2+\xi_3=\xi} \left[\lan{\xi_1}^{\ga-\de}|\widetilde{u}|\right] |\widetilde{v}|\,\widetilde{w^{\de}}\left[ \f{\lan{\xi}^{-\de}\wt{z}(\tau,\xi)}{\langle \tau-\xi^3 \rangle^{\frac{1}{2}-\de}}\right] d\sigma\right| \\
	&\hspace{-120pt}\lesssim M_1^* L_{2}^{\f{1}{2}}\sup_{\|z\|_{L^2_{\tau}l^2_{\xi}}=1} \n{\left(|\widetilde{v}|\right) \cdot\left(\left[\lan{\cdot}^{\ga-\de} |\widetilde{u}|\right]*\widetilde{w^{\de}}*\left[L^{-\f{1}{2}}\wt{z^{\de}}\right]\right)}{L^1_{\tau}l^1_{\xi}}\\
	&\hspace{-120pt}\lesssim M_1^* \|v\|_{X^{0,\frac{1}{2}}} \sup_{\|z\|_{L^2_{\tau}l^2_{\xi}}=1} \n{\left[\lan{\na}^{\ga-\de} \cf^{-1}_{\tau,\xi}[|\wt{u}|] \right]\,w^{\de}\,\left[L^{-\f{1}{2}}z^{\de}\right] }{L^2_{t,x}}\\
	&\hspace{-120pt}\lesssim_{\delta} M_1^* \|u\|_{X^{0,\frac{1}{2}+\delta}}\|v\|_{X^{0,\frac{1}{2}}}\|w\|_{X^{0,\frac{1}{2}}}
\end{align*}
where we have used H\"older and \eqref{xsb2} for the last inequality, and $M_1^*$ is exactly $ \lan{\xi}^{\de} \lan{\xi_2}^{-\de} M_1$.  Such transfer only results in a harmless exchange of $\lan{\xi}^{\de}$ for $\lan{\xi_2}^{\de}$.  This proves \eqref{eq:wff}.

The proof of \eqref{eq:ffw} is similar, so we will prove this first. 

\textbf{Proof of \eqref{eq:ffw}:} The proof is essentially identical to that of \eqref{eq:wff}.  In place of $M_1$, we have
\begin{align*}
M_2 &:= \sup_{\tiny \begin{array}{c}(\xi_1+\xi_2)(\xi_2+\xi_3)(\xi_3+\xi_1)\neq 0\\ \xi_1+\xi_2+\xi_3=\xi,\quad \xi_j \neq 0 \end{array}} \f{\lan{\xi_1}^{s+\de}\langle\xi_2\rangle^{s+\delta} \langle \xi_3\rangle^{s+\delta}\langle \xi_3\rangle^{\ga-s}}{|\xi_3|\lan{\xi_3}^{\ga-\de} L_{\max}^{\frac{1}{2}-2\delta}} \\
	&\lesssim \sup_{\tiny \begin{array}{c}(\xi_1+\xi_2)(\xi_2+\xi_3)(\xi_3+\xi_1)\neq 0\\ \xi_1+\xi_2+\xi_3=\xi \end{array}} \frac{L_{\max}^{-\delta} \lan{\xi_1}^{s+\de}\langle\xi_2\rangle^{s+\delta} \langle \xi\rangle^{\ga-s}}{\lan{\xi_3}^{1+\ga-s-2\delta} (|\xi_1+ \xi_2||\xi_2+\xi_3||\xi_3+\xi_1|)^{\frac{1}{2}-3\delta} }.
\end{align*}

We omit most cases since they follow the same estimate of $M_1$.  The only case which differs from the estimate of $M_1$ is the case when $\xi_3 \ll \xi_{\max}$.

First, we consider $\xi_1\sim \xi_2\sim  \xi \gg \xi_3$.  Note that in this case, 
\begin{equation}\label{eq:save}
|\xi_1 + \xi_2| |\xi_2 +\xi_3| |\xi_3+\xi_1| = |\xi-\xi_3| |\xi_2 +\xi_3| |\xi_3+\xi_1|
\end{equation}
which has size $\xi_{\max}^3$ under our assumptions.  Thus, $M_2 \lesssim L_{\max}^{-\de} \xi_{\max}^{\ga + s-\f{3}{2} + 11\de}$.

We also should consider $\xi_1 \sim \xi_2 \gg \max(|\xi|,|\xi_3|)$ and $\xi_1 \sim \xi \gg \max(|\xi_2|,|\xi_3|)$.  One needs to be careful here since we cannot assume that small indices are of the similar size.

When $\xi_1 \sim \xi_2 \gg \max(|\xi|,|\xi_3|)$, we use \eqref{eq:save}.  Noting that $\lan{\xi_3} |\xi-\xi_3|\gtrsim \xi$, we can write $\lan{\xi_3}|\xi-\xi_3| |\xi_2 +\xi_3| |\xi_3+\xi_1| \gtrsim \xi_{\max}^2 |\xi|$.  Then, 
\[
M_2 \lesssim L^{-\de}_{\max} \xi_{\max}^{2s+8\de -1} |\xi|^{\ga-s +3\de - \f{1}{2}} \leq L^{-\de}_{\max}|\xi|^{s+\ga+11\de -\f{3}{2}}.
\]

Finally, when $\xi_1 \sim \xi \gg \max(|\xi_2|,|\xi_3|)$, similar argument gives 
\[
M_2 \lesssim L^{-\de}_{\max} \xi_{\max}^{\ga + 7\de - 1} |\xi_2|^{s+4\de -\f{1}{2}}.
\]
Thus, with $\ga <1-10\de$, the desired estimate holds.  This exhaust the cases when $\xi_3 \ll \xi_{\max}$.  For the other cases, refer to the proof of \eqref{eq:wff}.

\textbf{Proof of \eqref{eq:hff}:} The idea of this estimate is very similar to that of \eqref{eq:wff}.  The new ingredient here is that one can choose to switch out modulation frequency for spatial derivative in high frequency realm.  

Begin by writing $\n{\mathcal{F}_{\xi}^{-1}[\mathcal{NR}(u,v,w)] }{X_T^{\ga,-\frac{1}{2}+\delta}}= $
\[
\n{ \sum_{\tiny \begin{array}{c}(\xi_1+\xi_2)(\xi_2+\xi_3)(\xi_3+\xi_1)\neq 0\\ \xi_1+\xi_2+\xi_3=\xi, \quad \xi_j \neq 0 \end{array} } \frac{\langle \xi_1\rangle^s \langle\xi_2\rangle^s \langle \xi_3\rangle^{s}\langle \xi\rangle^{\ga-s}}{i\xi_3\langle\tau-\xi^3 \rangle^{\frac{1}{2}-\delta}} [\widetilde{u}(\xi_1)*_{\tau} \widetilde{v}(\xi_2)*_{\tau} \widetilde{w}(\xi_3)](\tau)}{L^2_{\tau} l^2_{\xi}(\mathbf{Z}\setminus \{0\})}.
\]
We can split the summand into two sets: 1) $L_{\max} \gtrsim \xi_{\max}^2 \min(|\xi_1 + \xi_2|, |\xi_2+\xi_3|,|\xi_3+\xi_1|)$; 2) otherwise.  In the first case, gain through the modulation frequency will be enough, so we will use the $X^{0,\f{1}{2}}$~norm.  In the second case, note that $\xi\sim \xi_1 \sim \xi_2 \sim \xi_3$ must hold, since otherwise \eqref{eq:xisquaregain} forces the first condition to hold.  Therefore, $\xi_1$ is high frequency in this case. So we will use the $X^{1, 0}$~norm.

First, assume $L_{\max} \gtrsim \xi_{\max}^2 \min(|\xi_1 + \xi_2|, |\xi_2+\xi_3|,|\xi_3+\xi_1|)$.  In this case, the computation is almost identical to the proof of \eqref{eq:wff} and it suffices to estimate
\begin{align*}
M_3 &:= \sup_{\tiny \begin{array}{c}(\xi_1+\xi_2)(\xi_2+\xi_3)(\xi_3+\xi_1)\neq 0\\ \xi_1+\xi_2+\xi_3=\xi,\quad \xi_j \neq 0 \end{array}} \f{\lan{\xi_1}^{s+\de} \langle\xi_2\rangle^{s+\delta} \langle \xi_3\rangle^{s+\delta}\langle \xi\rangle^{\ga-s}}{|\xi_3| L_{\max}^{\frac{1}{2}-2\delta}} \\
	&\lesssim \sup_{\tiny \begin{array}{c}(\xi_1+\xi_2)(\xi_2+\xi_3)(\xi_3+\xi_1)\neq 0\\ \xi_1+\xi_2+\xi_3=\xi \end{array}} \frac{L_{\max}^{-\delta} \lan{\xi_1}^{s+\de}\langle\xi_2\rangle^{s+\delta} \langle \xi\rangle^{\ga-s}}{\lan{\xi_3}^{1-s-\delta} \xi_{\max}^{1-6\de} \min(|\xi_1+\xi_2|,|\xi_2 + \xi_3|, |\xi_3 + \xi_1|)^{\f{1}{2}-3\de} }
\end{align*}

If $\xi_3 \sim \xi_{\max}$, then $M_3 \lesssim L_{\max}^{-\de} \xi_{\max}^{\ga+2s -2 +9\de}$ is bounded.  Otherwise $\xi_3 \ll \xi_{\max}$, and the desired estimate follows from the arguments in the proof of \eqref{eq:ffw}.

Next, we can assume $L_{\max} \ll \xi_{\max}^2 \min(|\xi_1 + \xi_2|, |\xi_2+\xi_3|,|\xi_3+\xi_1|)$.  As stated above, this forces all frequencies to be at the same level.  Following previous computations,
\begin{align*}
\n{\mathcal{F}_{\xi}^{-1}[\mathcal{NR}(u,v,w)] }{X^{\ga,-\frac{1}{2}+\delta}} &\lesssim M_3^*  \n{\frac{\left[\lan{\cdot}|\widetilde{u}|\right]*\widetilde{v^{\de}} *  \widetilde{w^{\de}}](\tau,\xi)}{\lan{\xi}^{\de} L^{\frac{1}{2}+\de}}}{L^2_{\tau} l^2_{\xi}}\\
	&\lesssim M_3^* \|u\|_{X^{1,0}} \sup_{\|z\|_{L^2_{\tau}l^2_{\xi}}=1} \n{v^{\de}\,w^{\de}\,\left[L^{-\f{1+\de}{2}}z^{\de}\right] }{L^2_{t,x}}\\
	&\lesssim_{\delta} M_3^* \|u\|_{X^{1,0}}\|v\|_{X^{0,\frac{1}{2}}}\|w\|_{X^{0,\frac{1}{2}}}
\end{align*}
where 
\begin{align*}
M_3^* &:= \sup_{\tiny \begin{array}{c}(\xi_1+\xi_2)(\xi_2+\xi_3)(\xi_3+\xi_1)\neq 0\\ \xi_1+\xi_2+\xi_3=\xi,\quad \xi_j \neq 0 \end{array}} \f{L_{\max}^{3\de} \langle\xi_2\rangle^{s+\delta} \langle \xi_3\rangle^{s+\delta}\langle \xi\rangle^{\ga-s+\de}}{\lan{\xi_1}^{1-s}|\xi_3| }\\
&:= \sup_{\tiny \begin{array}{c}(\xi_1+\xi_2)(\xi_2+\xi_3)(\xi_3+\xi_1)\neq 0\\ \xi_1+\xi_2+\xi_3=\xi,\quad \xi_j \neq 0 \end{array}} \f{L_{\max}^{-\de}\xi_{\max}^{12\de} \langle\xi_2\rangle^{s+\delta} \langle \xi\rangle^{\ga-s+\de}}{\lan{\xi_1}^{1-s}\lan{\xi_3}^{1-s-\de} }.
\end{align*}
Since all $\xi\sim \xi_1\sim \xi_2 \sim \xi_3$, we have $M_3^* \lesssim L_{\max}^{-\de}\xi_{\max}^{\ga +2s +15\de -2}$.  Thus we have \eqref{eq:hff}.

\eqref{eq:ffh} can be proved exactly the same way, thus the proof is omitted.
\end{proof}

The next lemma deals with the \emph{resonant} term~$\mathcal{R}$ in \eqref{eqwper}.  To reduce the number of cases, we ignore the complex conjugation.  In particular, this means that $\mathcal{R}(\cdot, \cdot,\cdot)$ can be considered to be symmetric in all three variables.  This does not cause any problem in the proof, since we do not take advantage of cancellations here.  

\begin{lemma}\label{res}
Given $0\leq s<1/2$, $0<\ga\leq 1$ and $0<\de<1/2$, the following estimate holds for arbitrary functions $u,v\in L^{\infty}_t([0,T];L^2_x(\bt))$ and $w\in L^2 H_x^{\ga}$.
\[
\n{\cf^{-1}\left[\mathcal{R}(u,v,w)\right]}{X_T^{\ga, -\f{1}{2}+\de}} \lesssim_{\de} T^{\f{1}{2}-\de}\|u\|_{L^{\infty}_t([0,T];L^2_x(\bt))} \|v\|_{L^{\infty}_t([0,T];L^2_x(\bt))} \|w\|_{L^{\infty}_t([0,T];H^{\ga}_x(\bt))}.
\]
\end{lemma}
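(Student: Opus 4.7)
The key observation is that $\mathcal{R}(u,v,w)$ is diagonal in the spatial Fourier variable: up to complex conjugations (which we are told to ignore), its $\xi$-th Fourier coefficient is
\[
\mathcal{R}(u,v,w)(\xi) = -\frac{\lan{\xi}^{2s}}{i\xi}\,\wh{u}(\xi)\wh{v}(\xi)\wh{w}(\xi),
\]
so no summation over input frequencies is involved. This will collapse the estimate to a pointwise-in-$t$ product estimate in $H^\gamma_x$, with no use of Strichartz-type bounds.

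The plan is to chain two trivial embeddings. First, since $-\tfrac{1}{2}+\de < 0$, the weight $\lan{\tau-\xi^3}^{-\frac{1}{2}+\de} \leq 1$, so
\[
\n{\cf^{-1}[\mathcal{R}(u,v,w)]}{X^{\gamma,-\frac{1}{2}+\de}_T}
\;\leq\; \n{\eta(t/T)\cf^{-1}[\mathcal{R}(u,v,w)]}{X^{\gamma,0}}
\;=\; \n{\eta(t/T)\cf^{-1}[\mathcal{R}(u,v,w)]}{L^2_t H^\gamma_x}.
\]
Second, Cauchy--Schwarz in time against $\eta(t/T)$ yields a factor $\lesssim T^{1/2}$ times $\|\cf^{-1}[\mathcal{R}(u,v,w)]\|_{L^\infty_t([0,T];H^\gamma_x)}$. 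Since $T\leq 1$ in the regime of interest, this is absorbed into the desired $T^{\frac{1}{2}-\de}$.

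It then remains to show the pointwise-in-$t$ estimate
\[
\n{\cf^{-1}[\mathcal{R}(u,v,w)](t)}{H^\gamma_x}
\;\lesssim\; \n{u(t)}{L^2_x}\n{v(t)}{L^2_x}\n{w(t)}{H^\gamma_x}.
\]
Expanding the $H^\gamma_x$ norm by Plancherel,
\[
\n{\cf^{-1}[\mathcal{R}(u,v,w)](t)}{H^\gamma_x}^2
= \sum_{\xi\neq 0} \lan{\xi}^{2\gamma}\,\frac{\lan{\xi}^{4s}}{|\xi|^2}\,
|\wh{u}(t,\xi)|^2\,|\wh{v}(t,\xi)|^2\,|\wh{w}(t,\xi)|^2.
\]
Because $\xi\in \mathbf{Z}\setminus\{0\}$ implies $|\xi|\geq 1$ and $s\leq \tfrac{1}{2}$, the weight $\lan{\xi}^{2s}/|\xi|$ is uniformly bounded. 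I then pull $\|\wh{u}(t,\cdot)\|_{\ell^\infty_\xi}\|\wh{v}(t,\cdot)\|_{\ell^\infty_\xi}$ out of the sum and bound each factor by $\|u(t)\|_{L^2_x}$, $\|v(t)\|_{L^2_x}$ respectively, leaving $\sum_\xi \lan{\xi}^{2\gamma}|\wh{w}(t,\xi)|^2 = \|w(t)\|_{H^\gamma_x}^2$.

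There is essentially no obstacle here: the diagonal structure of $\mathcal{R}$ trivializes what would normally be a delicate trilinear $X^{s,b}$ estimate, and the condition $s<1/2$ is exactly what is needed to tame the derivative loss $\lan{\xi}^{2s}/|\xi|$. The only point requiring minor care is keeping track of the time cutoff $\eta(t/T)$ through the first embedding, but since truncation in time only shrinks both norms involved, no additional argument is required.
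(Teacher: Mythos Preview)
Your argument is correct and essentially coincides with the paper's proof. The paper likewise exploits the diagonal structure of $\mathcal{R}$, passes to the $L^2_t H^\gamma_x$ norm, uses $\lan{\xi}^{2s}/|\xi|\lesssim 1$ for $s<1/2$, and pulls out $\wh{u},\wh{v}$ in $l^\infty_\xi$; the only cosmetic difference is that the paper extracts the factor $T^{\frac{1}{2}-\de}$ via Proposition~\ref{timeloc} (taking $b'=-\tfrac12+\de$, $b=0$) and lands on $\|w\|_{L^2_T H^\gamma_x}$, whereas you use the trivial embedding $X^{\gamma,-\frac12+\de}\hookleftarrow X^{\gamma,0}$ followed by H\"older in time to get $T^{1/2}\leq T^{\frac12-\de}$ and land on $\|w\|_{L^\infty_T H^\gamma_x}$.
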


\textbf{Remark:} The estimate above suffices to control all the resonance because 
\[
R[f]\in L^{\infty}_T L^2_x;\quad h,k \in L^{\infty}_T L^2_x \cap L^{2}_T H^{\ga}_x; \quad w\in X_T^{\ga, \f{1}{2}+\de}\subset L^{2}_T H^{\ga}_x \cap C^0_t H^{\ga}_x.
\]

\begin{proof}
In the resonant part, we do not benefit from modulation frequencies, so we can enjoy a large power of $T$.  We have
\begin{align*}
\n{\cf^{-1}\left[\mathcal{R}(u,v,w)\right]}{X_T^{\ga, -\f{1}{2}+\de}} &\lesssim_{\de} T^{\f{1}{2}-\de} \n{\frac{\langle \xi \rangle^{2s+\ga}}{\xi} |\widehat{u}| |\wt{v}| |\wh{w}|}{L^2_T l^2_{\xi}}\\ 
	&\lesssim  T^{\f{1}{2}-\de} \n{|\widehat{u}| |\wh{v}| \left[\langle \xi \rangle^{\ga} |\wh{w}|\right]}{L^2_T l^2_{\xi}}\\ 
	&\lesssim  T^{\f{1}{2}-\de}\n{\widehat{u}}{L^{\infty}_T l^{\infty}_{\xi}}\n{\widehat{v}}{L^{\infty}_T l^{\infty}_{\xi}} \n{\left[\langle \xi \rangle^{\ga} |\wh{w}|\right]}{L^2_T l^2_{\xi}}\\ 
	&\lesssim  T^{\f{1}{2}-\de}\n{u}{L^{\infty}_T L^2_{x}} \n{v}{L^{\infty}_T L^{2}_{x}} \n{w}{L^2_T H^{\ga}_x}. 
\end{align*}
\end{proof}

Finally, it remains to estimate the quinti-linear term~$\mathcal{Q}$ in \eqref{eqwper}.  As observed before, this will be essentially a trilinear estimate, so we will use the $L^6$~embedding.

\begin{lemma} \label{le:quintic}
Given $0\leq s <1/2$, $0<10\de<1-2s$ and $0<\ga\leq 1$, the following estimates hold for $R[f]$ and any $u\in X^{0,\f{1}{2}+\de}$.
\begin{align}
\n{J(\cf^{-1}_{\xi}[\mathcal{R}(R[f],R[f],R[f])],u,u)}{X_T^{\ga, -\f{1}{2}+\de}} &\lesssim_{\de} T^{\de} \|f\|_{L^2}^2 \|R[f]\|_{X^{0,\f{1}{2}+\de}} \|u\|^2_{X^{0,\f{1}{2}+\de}}\label{eq:j1}\\
\n{J(u,u,\cf^{-1}_{\xi}[\mathcal{R}(R[f],R[f],R[f])])}{X_T^{\ga, -\f{1}{2}+\de}} &\lesssim_{\de} T^{\de} \|f\|_{L^2}^2\|R[f]\|_{X^{0,\f{1}{2}+\de}} \|u\|^2_{X^{0,\f{1}{2}+\de}}\label{eq:j2}
\end{align}
\end{lemma}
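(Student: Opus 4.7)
The strategy is to exploit the observation immediately following \eqref{eq:q2}: since only three distinct space-time Fourier modes $\xi_1,\xi_2,\xi_3$ appear in \eqref{eq:j1} and \eqref{eq:j2}, the nominally quintilinear expression is actually trilinear, and I plan to bound it along the same lines as Lemma~\ref{nonres}. The plan is to peel off the factor $|\wh{f}(\xi_1)|^2$ (respectively $|\wh{f}(\xi_3)|^2$) produced by $\cf^{-1}_\xi[\mathcal{R}(R[f],R[f],R[f])]$, bound it pointwise by $\|f\|_{L^2}^2$ via $\|\wh{f}\|_{l^\infty_\xi}^2\leq \|f\|_{L^2}^2$, and then estimate what remains as a trilinear form in $R[f]$, $u$, $u$.

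For \eqref{eq:j1}, working from the Fourier formulas already displayed for $\cf\left[J(\cf^{-1}_\xi[\mathcal{R}(R[f],R[f],R[f])], R[f], R[f])\right]$ in Section~\ref{sec:setup}, the residual multiplier after extracting $|\wh{f}(\xi_1)|^2$ is
\[
\f{\lan{\xi_1}^{3s}\lan{\xi_2}^s\lan{\xi_3}^s}{|\xi_1||\xi_3|\lan{\xi}^s\,(\xi_1+\xi_2)(\xi_2+\xi_3)(\xi_3+\xi_1)}.
\]
Compared with the $\mathcal{NR}$-multiplier used in the proof of \eqref{eq:wff}, this residual differs by the harmless factor $\lan{\xi_1}^{2s-1}$ (bounded since $s<\f{1}{2}$) and, crucially, by the extra denominator $[(\xi_1+\xi_2)(\xi_2+\xi_3)(\xi_3+\xi_1)]^{-1}$ inherited from $J$. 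I then follow the scheme of Lemma~\ref{nonres} essentially verbatim: split the dyadic modulation sum into the regimes $L\sim L_{\max}$ and $L\ll L_{\max}$, apply Plancherel together with H\"older and the $L^6_{t,x}$-embedding \eqref{xsb2}, and reduce to a deterministic $l^\infty$ multiplier-sup bound. By \eqref{eq:cubic}, a fractional power of the extra denominator absorbs the usual $L_{\max}^{1/2-2\de}$ weight exactly as in the proof of \eqref{eq:wff}; the leftover power of $[(\xi_1+\xi_2)(\xi_2+\xi_3)(\xi_3+\xi_1)]^{-1}$ then supplies genuine spatial-frequency gain, which is the margin that lets me both accommodate $\lan{\xi}^\ga$ for any $\ga\le 1$ and compensate for the loss of $\lan{\xi_1}^\ga$ on the $R[f]$-slot (we only control $R[f]\in X^{0,\f{1}{2}+\de}$).

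The factor $T^\de$ is obtained in the standard way by proving the estimate at a slightly enlarged modulation exponent $-\f{1}{2}+\de'$ with $\de'>\de$ and then applying Proposition~\ref{timeloc} to trade the excess $\de'-\de$ for $T^{\de'-\de}$; this simultaneously produces summability in the dyadic decomposition. Estimate \eqref{eq:j2} will follow from the same argument after interchanging the roles of $\xi_1$ and $\xi_3$, which is legitimate because the two remaining $J$-slots are both filled by the same $u$.

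The main obstacle I expect is purely computational: verifying the multiplier sup in each dyadic frequency regime, particularly in the geometry $|\xi|\sim|\xi_2|\gg\max(|\xi_1|,|\xi_3|)$ where $|(\xi_1+\xi_2)(\xi_2+\xi_3)(\xi_3+\xi_1)|$ only reaches $\xi_{\max}^2$ (cf.\ \eqref{eq:xisquaregain}) rather than the full $\xi_{\max}^3$, and in the comparable-frequency regime $|\xi_1|\sim|\xi_2|\sim|\xi_3|$ where one must trade modulation weight for spatial weight more aggressively in order to keep the power counting tight uniformly in $\ga\in(0,1]$.
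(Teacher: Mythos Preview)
Your proposal is correct and follows essentially the same route as the paper: extract $|\wh{f}(\xi_j)|^2\le\|f\|_{L^2}^2$, dyadically decompose in modulation, and reduce via Plancherel, H\"older and the $L^6$ embedding \eqref{xsb2} to a pointwise multiplier bound, with the $L\ll L_{\max}$ regime handled by the same duality transfer as in Lemma~\ref{nonres}. The one place you over-anticipate difficulty is the frequency case-analysis: because the full extra factor $[(\xi_1+\xi_2)(\xi_2+\xi_3)(\xi_3+\xi_1)]^{-1}$ from $J$ combines with $L_{\max}^{-\f{1}{2}+\de}$ to give a denominator of order $|(\xi_1+\xi_2)(\xi_2+\xi_3)(\xi_3+\xi_1)|^{\f{3}{2}-2\de}$, the crude lower bound $|(\xi_1+\xi_2)(\xi_2+\xi_3)(\xi_3+\xi_1)|\gtrsim\xi_{\max}$ already yields $\xi_{\max}^{\ga+s+O(\de)-\f{3}{2}}$, which is bounded for every $\ga\le 1$; no separate treatment of the regimes \eqref{eq:xigain}--\eqref{eq:xisquaregain} is needed here.
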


\begin{proof}
First, we consider \eqref{eq:j1}.  As in Lemma~\ref{nonres}, we decompose the modulation frequency such that $\lan{\tau_j - \xi_j^3} \sim L_j$ and $\lan{\tau-\xi^3} \sim L$ for dyadic indices~$L, L_j \geq 1$.  We will show that each modulational component is bounded by $C\|f\|_{L^2}^2 L_{\max}^{-\de}$ where $C\leq C(\de)$. 

First we consider the case when $L\sim L_{\max}$. Using Plancherel, H\"older and $L^6$ embedding~\eqref{xsb2}, we show $\n{J(\cf^{-1}_{\xi}[\mathcal{R}(R[f],R[f],R[f])],u,u)}{X_T^{\ga, -\f{1}{2}+\de}}  = $
\begin{align*}
&\n{\sum_{\tiny \begin{array}{c} (\xi_1 + \xi_2)(\xi_2+ \xi_3)(\xi_3+\xi_1)\neq 0\\ \xi_1 + \xi_2 + \xi_3 = \xi, \qquad \xi_j \neq 0\end{array}} \f{\lan{\xi_1}^{3s} \lan{\xi_2}^s \lan{\xi_3}^s |\wh{f}(\xi_1)|^2\left[\wh{R[f]}(\xi_1) *_{\tau} \wh{u}(\xi_2) *_{\tau}\wh{u}(\xi_3)\right]}{-\lan{\tau- \xi^3}^{\f{1}{2}-\de}\xi_1 \xi_3  \lan{\xi}^s (\xi_1 + \xi_2)(\xi_2+ \xi_3)(\xi_3+\xi_1)} }{L^2_{\tau} l^2_{\xi}}\\
	&\lesssim_{\delta} M \n{ \wt{R[f]^{-\de}}* ( \widetilde{v^{-\delta}} * \widetilde{w^{-\de}})}{L^2_{\tau}l^2_{\xi}}  \lesssim_{\delta} M\| R[f]\|_{X^{0,\f{1}{2}+\de}}  \| v\|_{X^{0,\f{1}{2}+\de}} \| w\|_{X^{0,\f{1}{2}+\de}}
\end{align*}
where $v^{-\de}$ is defined by $\wt{v^{-\de}}(\xi) =  \lan{\xi}^{-\de}|\wt{v}|(\tau,\xi)$ and
\begin{align*}
M &:= \sup_{\tiny \begin{array}{c}(\xi_1+\xi_2)(\xi_2+\xi_3)(\xi_3+\xi_1)\neq 0\\ \xi_1+\xi_2+\xi_3=\xi,\quad \xi_j \neq 0 \end{array}} \frac{\lan{\xi_1}^{3s+\de} \lan{\xi_2}^{s+\de}\lan{\xi_3}^{s+\de}\lan{\xi}^{\ga-s} \left|\wh{f}(\xi_1)\right|^2 }{L_{\max}^{\f{1}{2}-\de} |\xi_1| |\xi_3|  |(\xi_1 + \xi_2)(\xi_2+ \xi_3)(\xi_3+\xi_1)|}\\
	 &\lesssim \sup_{\tiny \begin{array}{c}(\xi_1+\xi_2)(\xi_2+\xi_3)(\xi_3+\xi_1)\neq 0\\ \xi_1+\xi_2+\xi_3=\xi,\quad \xi_j \neq 0 \end{array}} \frac{L_{\max}^{-\de}\|f\|_{L^2}^2\lan{\xi_1}^{3s-1+\de} \lan{\xi_2}^{s+\de}\lan{\xi}^{\ga-s}}{\lan{\xi_3}^{1-s-\de}  |(\xi_1 + \xi_2)(\xi_2+ \xi_3)(\xi_3+\xi_1)|^{\f{3}{2}-2\de}}.
\end{align*}	 

So it suffices to find an absolute constant $C$ to bound
\begin{equation}\label{eq:lastbound}
\frac{\lan{\xi_1}^{3s-1+\de} \lan{\xi_2}^{s+\de}\lan{\xi}^{\ga-s}}{\lan{\xi_3}^{1-s-\de}  |(\xi_1 + \xi_2)(\xi_2+ \xi_3)(\xi_3+\xi_1)|^{\f{3}{2}-2\de}}
\end{equation}
for $\xi, \xi_1, \xi_2,\xi_3\in bz\setminus\{0\}$ satisfying $(\xi_1+\xi_2)(\xi_2+\xi_3)(\xi_3+\xi_1)\neq 0$ and $\xi_1+\xi_2+\xi_3=\xi$.

Recalling $(\xi_1 + \xi_2)(\xi_2+ \xi_3)(\xi_3+\xi_1)\gtrsim \xi_{\max}$, we have $\eqref{eq:lastbound} \lesssim \xi_{\max}^{\ga +s +3\de -3/2}$ which is bounded.

We remark that when $L\ll L_{\max}$, then we can transfer $L_{\max}$ from elsewhere as done in Lemma~\ref{nonres} possibly at the cost of $\xi_{\max}^{\de}$.

To show \eqref{eq:j2}, we follow the same computations and reduce to finding the bound for 
\begin{equation}\label{eq:lastbound2}
\frac{\lan{\xi_1}^{s+\de} \lan{\xi_2}^{s+\de}\lan{\xi}^{\ga-s}}{\lan{\xi_3}^{2-3s-\de}  |(\xi_1 + \xi_2)(\xi_2+ \xi_3)(\xi_3+\xi_1)|^{\f{3}{2}-2\de}}.
\end{equation}

Using $|(\xi_1 + \xi_2)(\xi_2+ \xi_3)(\xi_3+\xi_1)|\gtrsim \xi_{\max}$, we obtain $\eqref{eq:lastbound2} \lesssim \xi_{\max}^{\ga+s+4\de -\f{3}{2}}$. This proves \eqref{eq:j2}.
\end{proof}

\subsection{Local theory}
\label{sec:local}
We now have obtained all necessary estimates to perform contraction on $Y^{\ga,\frac{1}{2}+\delta}$ for $0<10\de<1-2s$ and $\ga\leq 1-10\de$.  In this section, we assume $T>0$ is small.

\begin{proposition}\label{pro:local}
The PBIVP \eqref{eqwper} is locally well-posed in $H^{\ga}$ where $\ga$ satisfies  $\ga\leq 1-10\de$ for some $\de>0$ such that $0<10\de<1-2s$.  Furthermore, there exists $T = O \left( \|f\|_{L^2}^{-10/\de}\right)$, such that the solution $w(t)$ for $t\in [0,T]$ solves \eqref{eqwper} and satisfies
\[
\|w\|_{X^{\ga,\f{1}{2}+\de}} \sim \|w(0)\|_{H^{\ga}}.
\]
\end{proposition}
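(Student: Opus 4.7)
The plan is to set up a standard contraction mapping argument in $Y^{\ga,\f{1}{2}+\de}_T$ using the Duhamel formulation of \eqref{eqwper}:
\[
\Phi(w)(t) := \eta(t)e^{-t\p_x^3}w(0) + \eta(t/T)\int_0^t e^{-(t-t')\p_x^3}\bigl[\mathcal{NR} + \mathcal{R} + \mathcal{Q}\bigr](t')\,dt'.
\]
First I would check that the initial data lies in $H^{\ga}$: by Lemmas~\ref{tpmap1} and \ref{le:jmap} we have $\|T(f,f)\|_{H^1} + \|J(f,f,f)\|_{H^1} \lesssim \|f\|_{L^2}^2 + \|f\|_{L^2}^3$, hence $\|w(0)\|_{H^\ga}\lesssim \|f\|_{L^2}^2 + \|f\|_{L^2}^3$ since $\ga \leq 1$. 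The linear piece is then controlled by \eqref{airyfree}, and the Duhamel piece is controlled by Proposition~\ref{xsb}, reducing everything to $X^{\ga,-\f{1}{2}+\de}$ bounds on the right-hand side.

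Next I would record the ``frozen'' regularity of the auxiliary functions. From Lemma~\ref{rffree} and Proposition~\ref{pro:timebound}, $R[f] \in X_T^{0,1+2\de}$ with a bound polynomial in $\|f\|_{L^2}$. From \eqref{eq:tspace} and Lemma~\ref{tpmap1}, $h \in X_T^{1,0} \cap X_T^{0,\f{1}{2}}$. From Lemma~\ref{le:kspace} (together with Lemma~\ref{le:jmap}), $k \in X_T^{1,0}\cap X_T^{0,\f{1}{2}}$. Each of these norms is a polynomial in $\|f\|_{L^2}$, say bounded by $P(\|f\|_{L^2})$. Then I would apply the four trilinear estimates of Lemma~\ref{nonres} to the four nonresonant pieces \eqref{eq:nr1}--\eqref{eq:nr4} (estimate \eqref{eq:wff} for \eqref{eq:nr1}, \eqref{eq:hff} for \eqref{eq:nr2}, \eqref{eq:ffw} for \eqref{eq:nr3}, \eqref{eq:ffh} for \eqref{eq:nr4}), Lemma~\ref{res} to the resonant piece, and Lemma~\ref{le:quintic} to the quintilinear pieces \eqref{eq:q1}, \eqref{eq:q2}. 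Each estimate produces a positive power of $T$ (either $T^{\de}$ or $T^{\f{1}{2}-\de}$), multiplied by a factor that is either linear in $\|w\|_{X^{\ga,\f{1}{2}+\de}_T}$ or independent of $w$ but polynomial in $\|f\|_{L^2}$.

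The heart of the argument is then to pick the radius $r \sim \|w(0)\|_{H^\ga} \lesssim P(\|f\|_{L^2})$ and $T$ so small that $T^{\de}\cdot P(\|f\|_{L^2})^{\text{const}} \ll 1$, which is achieved by requiring $T \lesssim \|f\|_{L^2}^{-10/\de}$ for a sufficiently large absolute implicit exponent, giving the stated $T = O(\|f\|_{L^2}^{-10/\de})$. Under this condition, the $w$-free contributions (involving only $R[f]$, $h$, $k$) produce a factor of size $T^{\de}P(\|f\|_{L^2})$ that is absorbed into $r/2$, while the $w$-dependent contributions produce a factor $T^{\de}P(\|f\|_{L^2})$ times $\|w\|_{X^{\ga,\f{1}{2}+\de}_T}$ that is $\leq \f{1}{2}\|w\|$ on the ball, so $\Phi$ maps $B_r\to B_r$. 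The mean-zero subspace $Y^{\ga,\f{1}{2}+\de}$ is preserved because each nonlinearity in $\mathcal{NR}$, $\mathcal{R}$, $\mathcal{Q}$ carries a projection onto mean-zero modes (the frequency sums explicitly exclude $\xi=0$).

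For the contraction, I would run the same estimates applied to the difference $w_1 - w_2$, using multilinearity: each of Lemmas~\ref{nonres}, \ref{res}, \ref{le:quintic} is linear in the argument $w$, so the bound for $\Phi(w_1) - \Phi(w_2)$ comes out bounded by $C T^{\de}P(\|f\|_{L^2})\|w_1 - w_2\|_{X^{\ga,\f{1}{2}+\de}_T}$, again $\leq \f{1}{2}\|w_1-w_2\|$ for the chosen $T$. The resulting fixed point $w \in Y^{\ga,\f{1}{2}+\de}_T$ satisfies the claimed bound $\|w\|_{X^{\ga,\f{1}{2}+\de}_T}\sim \|w(0)\|_{H^\ga}$. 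The main obstacle is bookkeeping: there are many pieces (four non-resonant, one resonant, two quintilinear), and one has to be careful that every estimate from Lemmas~\ref{nonres}--\ref{le:quintic} applies with the actual regularity available on the frozen functions $R[f], h, k$ and that the resulting power of $\|f\|_{L^2}$ in $P(\|f\|_{L^2})$, while harmless for smallness via shrinking $T$, is high enough only to force the factor $10/\de$ in the time-of-existence bound.
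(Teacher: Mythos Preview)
Your proposal is correct and follows essentially the same contraction-mapping argument as the paper: Duhamel formulation, Proposition~\ref{xsb} to reduce to $X^{\ga,-\f{1}{2}+\de}$ bounds, then Lemmas~\ref{nonres}, \ref{res}, \ref{le:quintic} applied to the pieces \eqref{eq:nr1}--\eqref{eq:q2}, with the regularity of $R[f],h,k$ supplied by Lemmas~\ref{rffree}, \ref{tpmap1}, \ref{le:tspace}, \ref{le:jmap}, \ref{le:kspace} and Proposition~\ref{pro:timebound}. The only cosmetic difference is that the paper centers the contraction ball at the linear evolution $\eta(t)e^{-t\p_x^3}w(0)$ with small radius (so that $\|w\|\sim\|w(0)\|_{H^\ga}$ is immediate), whereas you center at the origin with radius $r\sim\|w(0)\|_{H^\ga}$; both choices lead to the same conclusion and the same time-of-existence scale $T\sim\|f\|_{L^2}^{-10/\de}$.
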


\begin{proof}
We write \eqref{eqwper} via the Duhamel's formula,
\begin{equation}\label{duhamel}
w(t) = \eta(t) e^{-t\p_x^3} w_0 + \eta(t/T)\int_0^t e^{-(t-s)\p_x^3} \left[\mathcal{NR} + \mathcal{R} + \mathcal{Q}\right]\, ds
\end{equation}
where $w_0 = w(0)$ given in \eqref{eqwper}.  We define the map~$\La_T: Y^{0,\f{1}{2}+\de} \to Y^{0,\f{1}{2}+\de}$ so that $\La_T(w)$ is the RHS of \eqref{duhamel} for arbitrary function $w\in Y^{0,\f{1}{2}+\de}$.

Let $\mathcal{B}$ be a ball in $Y^{\ga,\frac{1}{2}+\delta}$ centered at $-\eta(t) e^{-t\p_x^3} w_0$ with small radius.  We now show that for $T$ sufficiently small, $\La_T$ is a contraction on $\mathcal{B}$.  By Propositions~\ref{xsb} and \ref{timeloc}, we have
\[
\n{\Lambda_T w - e^{-t\p_x^3} w_0}{Y^{\ga,\frac{1}{2}+\delta}} \lesssim_{\eta} \|\mathcal{NR}\|_{Y_T^{\ga,-\frac{1}{2}+\delta}}+ \|\mathcal{R}\|_{Y_T^{\ga,-\frac{1}{2}+\delta}} + \|\mathcal{R}\|_{Y_T^{\ga,-\frac{1}{2}+\delta}}.
\]

Applying Lemmas~\ref{nonres}, \ref{res} and \ref{le:quintic}, and noting the nonlinearies \eqref{eq:nr1}-\eqref{eq:q2}, we can estimate above by
\begin{align}
C_{\de} T^{\de}&\left[\left(\|w\|_{X^{\ga,\f{1}{2}+\de}} + \|h+k\|_{X_1^{1,0}\cap X_1^{0,\f{1}{2}}}\right)\n{R[f]+h+k+w}{X_1^{0,\f{1}{2}}}^2 \right.\notag\\
&\left.+\|w+h+k\|_{L_t^{\infty}([0,T]; H^{\ga}_x)}\n{R[f]+h+k+w|}{L^{\infty}_t([0,T];L^2_x)}+ \|f\|^2_{L^2} \n{R[f]}{X_1^{0,\f{1}{2}+\de}}^3\right].\label{finalcont}
\end{align}

To simplify the estimates, assume $\|f\|_L^2\gg 1$ so that we may drop all lower powers.  By Lemmas~\ref{tpmap1} and \ref{le:jmap} and since $w\in \mathcal{B}$, we have
\[
\|w\|_{X^{\ga,\f{1}{2}+\de}} \sim \|w_0\|_{H^{\ga}} \lesssim \|T(f,f)\|_{H^{\ga}} + \|J(f,f,f)\|_{H^{\ga}} \lesssim \|f\|_{L^2}^2 + \|f\|_{L^2}^3\lesssim \|f\|_{L^2}^3.
\]
We use Lemma~\ref{rffree} for $R[f]$ and estimates\eqref{eq:tspace}, \eqref{eq:jspace} for $h,k$ respectively to show
\begin{align*}
\eqref{finalcont} &\leq C_{\de} T^{\de} \left(\|v\|_{X^{0,\f{1}{2}}\cap \wt{l^2_{\xi} L^1_{\tau}}}^2 + \|f\|_{L^2}^5\right)\left(\|f\|_{L^2}^5 +  \|v\|_{X^{0,\f{1}{2}} \cap \wt{l^2_{\xi} L^1_{\tau}}}^2\right)\leq C_{\de} T^{\de} \|f\|^{10}
\end{align*}
where we used Proposition~\ref{pro:timebound} to obtain $\|v\|_{X^{0,\f{1}{2}} \cap \wt{l^2_{\xi} L^1_{\tau}}} \lesssim \|f\|_{L^2}$ for short time~$T$.  Thus we need $T^{\de} \ll \|f\|^{-10}$.

It is easy to see from above computations that $T\ll \|f\|^{-10/\de}$ suffices for contraction also.  This proves the desired local well-posedness.
\end{proof}

The following is the main corollary from the local theory, which iterated in the next section to extend the result globally in time.

\begin{corollary}\label{cor:localsmoothing}
Given the initial data~$f$ in \eqref{periodic} and $\de,\ga,T >0$ as in Proposition~\ref{pro:local}, the local-in-time solution satisfies
\[
\sup_{t\in [0,T]} \|v(t)-R[f](t)\|_{H^{\ga}_x} \leq C_{\de} (1+ \|f\|_{L^2})^5
\]
\end{corollary}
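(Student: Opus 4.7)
The plan is to unwind the sequence of changes of variables carried out in Section~\ref{sec:setup}: starting from $v = h + z$, then $z = R[f] + y$, then $y = k + w$, we obtain the pointwise-in-time decomposition
\[
v(t) - R[f](t) = h(t) + k(t) + w(t),
\]
so it suffices to bound each of the three summands in $L^\infty_t([0,T]; H^\gamma_x)$ by $C_\de(1+\|f\|_{L^2})^5$.

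First I would handle $h = T(v,v)$. Since Lemma~\ref{tpmap1} gives $T:L^2 \times L^2 \to H^1$, applying it at each fixed $t$ yields $\|h(t)\|_{H^1} \lesssim \|v(t)\|_{L^2}^2$. Using Proposition~\ref{pro:timebound} with $s=0$ (or, more simply, the $L^2$ mass conservation together with the short-time local bound), we control $\|v(t)\|_{L^2} \lesssim \|f\|_{L^2}$ uniformly on $[0,T]$, giving $\|h\|_{L^\infty_t H^1_x} \lesssim \|f\|_{L^2}^2$. Similarly, for $k = J(R[f],R[f],R[f])$, Lemma~\ref{le:jmap} gives $J:(L^2)^3 \to H^1$, and since $R[\cdot]$ is an isometry on $L^2$ for every fixed $t$, we obtain $\|k\|_{L^\infty_t H^1_x} \lesssim \|f\|_{L^2}^3$. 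Since $\gamma \leq 1 - 10\delta < 1$, the Sobolev embedding $H^1 \hookrightarrow H^\gamma$ converts both of these into $H^\gamma$ bounds.

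For the $w$-term, I would invoke Proposition~\ref{pro:local}, which yields $\|w\|_{X^{\gamma,\frac{1}{2}+\delta}} \sim \|w(0)\|_{H^\gamma}$ on the contraction time $[0,T]$. The initial datum $w(0) = -T(f,f) - J(f,f,f)$ is estimated by Lemmas~\ref{tpmap1} and \ref{le:jmap} via
\[
\|w(0)\|_{H^\gamma} \leq \|T(f,f)\|_{H^1} + \|J(f,f,f)\|_{H^1} \lesssim \|f\|_{L^2}^2 + \|f\|_{L^2}^3.
\]
The embedding $X^{\gamma,\frac{1}{2}+\delta} \hookrightarrow C^0_t H^\gamma_x$ from \eqref{xsb1} then gives $\|w\|_{L^\infty_t H^\gamma_x} \lesssim \|f\|_{L^2}^2 + \|f\|_{L^2}^3$.

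Summing the three contributions produces
\[
\sup_{t\in[0,T]}\|v(t)-R[f](t)\|_{H^\gamma_x} \lesssim \|f\|_{L^2}^2 + \|f\|_{L^2}^3,
\]
which is absorbed into $C_\de(1+\|f\|_{L^2})^5$. No step is really an obstacle here: the work was all done in Section~\ref{sec:properties} and Proposition~\ref{pro:local}. The only mildly delicate point is keeping the estimates \emph{pointwise in} $t$ (rather than merely integrated in $t$), which is why I would apply Lemmas~\ref{tpmap1} and \ref{le:jmap} slice-by-slice and rely on the time-continuous embedding \eqref{xsb1} for $w$, rather than the weaker $L^2_t H^\gamma_x$ bound used inside the contraction argument.
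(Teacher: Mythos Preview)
Your proof is correct and follows essentially the same approach as the paper: decompose $v-R[f]=h+k+w$ and bound each piece using Lemma~\ref{tpmap1}, Lemma~\ref{le:jmap}, and Proposition~\ref{pro:local} together with the embedding~\eqref{xsb1}. One small slip: the control $\|v(t)\|_{L^2}\lesssim\|f\|_{L^2}$ should come from Proposition~\ref{pro:timebound} at the given $s$ (since $\|v\|_{L^2}=\|u\|_{H^{-s}}$), not from ``$s=0$'' or $L^2$ mass conservation of $v$; this does not affect the argument.
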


\begin{proof}
Since $v= R[f] + h + k + w$, it suffices to estimate
\[
\|h+k+w\|_{L^{\infty}_t([0,T]; H^{\ga}_x)} \lesssim_{\de}  \|h\|_{L^{\infty}_t([0,T]); H^1_x} + \|k\|_{X^{0,\f{1+\de}{2}}_T} + \|w\|_{X^{\ga, \f{1}{2}+\de}}.
\]

The first two terms are estimated through Lemma~\ref{tpmap1} and \eqref{eq:jspace} respectively, and the last term is given in Proposition~\ref{pro:local}.
\end{proof}

\subsection{Conclusion of the proof of Theorem~\ref{thm2}}\label{global}
With the local theory from Section~\ref{sec:local}, we iterate to obtain the corresponding global theory.  This is not very easy in this setting because $R[f]$ is not linear.  However, we can overcome this and still iterate as desired.  In this section, we assume $T>0$ is arbitrarily large.

Let $\ga, \de>0$ be as in Proposition~\ref{pro:local}.  Given $T>0$ large, Proposition~\ref{pro:timebound} gives that $\sup_{t\in [0,T]} \|v(t)\|_{L^2} \lesssim \lan{T}^s \|f\|_{L^2}$.  Then we select the time increment $\ve = O_{\|f\|_{L^2}}(\lan{T}^{-10s/\de})$ fixed in with the local theory holds as stated in Section~\ref{sec:local}.  The number of iterations~$M$ is of order $T/\ve$, so we let $M = O_{\|f\|_{L^2}}(T^{1+10s/\de})$.

Our iterating mechanism is as follows.  Uniformly partition the time interval~$[0,T]$ into $M$ pieces and let $\ve = T/M$.   Denote $v_j := v(j\ve)$. By triangular inequality,
\begin{equation}\label{eq:it}
\|v(T) - R[f](T)\|_{H^{\ga}} \leq \sum_{j=1}^{M} \n{R[v_j]((M-j)\ve)- R[v_{j-1}]((M-j+1)\ve)}{H^{\ga}}.
\end{equation}
The summand on the RHS of \eqref{eq:it} can be expressed as
\begin{equation}\label{eq:itsum}
\n{\lan{\xi}^{\ga}\left(\wh{v_j}e^{-2i\f{\lan{\xi}^{2s}}{\xi}|\wh{v_j}|^2 (M-j)\ve} - \wh{v_{j-1}} e^{-2i\f{\lan{\xi}^{2s}}{\xi}|\wh{v_{j-1}}|^2(M-j+1)\ve}\right)}{l^2_{\xi}}
\end{equation}

Adding and subtracting
\[
\lan{\xi}^{\ga} \wh{v_j} \exp\left(-2i\f{\lan{\xi}^{2s}}{\xi} |\wh{v_{j-1}}(\xi)|^2 (M-j)\ve\right),
\]
we note that \eqref{eq:itsum} splits into two piecies:
\begin{align}
&\n{\lan{\xi}^{\ga} \wh{v_j} \left(\exp \left[-2i \f{\lan{\xi}^{2s}}{\xi}\left( |\wh{v_j}|^2 - |\wh{v_{j-1}}|^2\right)(M-j)\ve\right] -1 \right)}{l^2_{\xi}}\label{eq:it1}\\
&+ \n{\lan{\xi}^{\ga}\left(\wh{v_j} - \wh{v_{j-1}} e^{-2i \f{\lan{\xi}^{2s}}{\xi} |\wh{v_{j-1}}|^2\ve}\right)}{l^2_{\xi}}.\label{eq:it2}
\end{align}
The second piece~\eqref{eq:it2} is equivalent to $\|v_{j} - R[v_{j-1}](\ve)\|_{H^{\ga}}$. Thus, using Corollary~\ref{cor:localsmoothing}, Proposition~\ref{pro:timebound} and assuming $T, \|f\|_{L^2}\gg 1$
\[
\eqref{eq:it2} \leq C_{\de} (1+\|v_{j-1}\|_{L^2})^5 \leq C_{\de} T^{5s} \|f\|_{L^2}^5.
\]

To estimate \eqref{eq:it1}, we use the mean-value theorem,
\begin{align*}
\eqref{eq:it1} &\lesssim T \n{\lan{\xi}^{\ga} \wh{v_j} \f{\lan{\xi}^{2s}}{\xi} \left(\left|\wh{v_j}(\xi)\right|^2 - \left|\wh{v_{j-1}}(\xi)\right|^2\right) }{l^2_{\xi}}\\
	&\lesssim T \n{\lan{\xi}^{\ga}\left(\left|\wh{v_j}\right| - \left|\wh{v_{j-1}}\right|\right)}{l^{2}_{\xi}} \left(\n{\wh{v_j}}{l^{\infty}_{\xi}} + \n{\wh{v_{j-1}}}{l^{\infty}_{\xi}}  \right) \n{\wh{v_j}}{l^{\infty}_{\xi}}
\end{align*}
Apart from the first term, we can estimate $\n{\wh{v_j}}{l^{\infty}_{\xi}} \leq \n{v_j}{L^2} \lesssim T^{s}\|f\|_{L^2}$.  To estimate the first term, note that we can replace 
\[
\left|\wh{v_{j-1}}(\xi)\right| = \left| \wh{v_{j-1}}(\xi) \exp \left(-2i \f{\lan{\xi}^{2s}}{\xi} |\wh{v_{j-1}}(\xi)|^2 \ve\right)\right|.
\]
Then using triangular inequality, this term is bounded by
\[
\n{\lan{\xi}^{\ga}\left( \wh{v_j}(\xi) - \wh{v_{j-1}}(\xi) e^{-2i \f{\lan{\xi}^{2s}}{\xi} |\wh{v_{j-1}}(\xi)|^2 \ve} \right)}{l^2}
\]
which is equivalent to $\n{v_j- R[v_{j-1}](\ve)}{H^{\ga}}$ which is bounded by $C_{\de}T^{5s} \|f\|^5_{L^2}$ from Corollary~\ref{cor:localsmoothing}.  Thus 
\[
\eqref{eq:it1} \leq C_{\de} T^{1+7s} \|f\|_{L^2}^5.
\]
Summing in $j=1,\cdots, M$, we have
$\|v(T) - R[f](T)\|_{H^{\ga}} \leq C_{\de} M T^{1+7s} \|f\|_{L^2}^5$.  Considering $M = O_{\|f\|_{L^2}} (T^{1+10s/\de})$, 
\[
\|v(T) - R[f](T)\|_{H^{\ga}} \leq C_{\de, \|f\|_{L^2}} T^{2+7s+ 10s/\de}. 
\]
Thus we obtain Theorem~\ref{thm2} with $\al(\de) = 2+7s + 10s/\de$.

To prove the Lipschitz property~\eqref{eq:thmlip}, let $f^1, f^2 \in L^2$ and $\|f^1 - f^2\|_{H^{\ga}}<\infty$ be the initial data of \eqref{periodic}.  Decompose the corresponding solutions $v^j = R[f^j] + h^j + k^j + w^j$ for $j=1,2$.  From Lemma~\ref{lipschitz},
$\|R[f^1]-R[f^2]\|_{C^0_t([0,T];H^{\ga}_x)} \lesssim \|f^1 - f^2\|_{H^{\ga}}$.  For $h^j := T(v^j, v^j)$, we use Lemma~\ref{tpmap1}
\begin{align*}
\|h^1 - h^2\|_{L^{\infty}_t([0,T];H^{\ga}_x)} &\lesssim \n{T(u+v, u-v)}{L^{\infty}_t([0,T];H^{\ga}_x)}\\
	 &\lesssim \|v^1+v^2\|_{L^{\infty}_T L^2} \|v^1-v^2\|_{L^{\infty}_t L^2}\\
	 &\lesssim T^{2s} (\|f^1\|_{L^2}+\|f^2\|_{L^2})\|f^1 - f^2\|_{L^2}.
\end{align*}
where the last statement follows from the local and global well-posedness theory of \cite{I1}.  For $k^j$,
\begin{align*}
\|k^1 - k^1\|_{L^{\infty}_T H^{\ga}} &= \n{J(R[f^1],R[f^1],R[f^1])- J(R[f^2],R[f^2],R[f^2])}{X^{\ga,\f{1+\de}{2}}_T}\\
	&\hspace{-60pt}= \n{J(R[f^1] - R[f^2], R[f^1],R[f^1]) + J(R[f^2], R[f^1]+ R[f^2], R[f^1]- R[f^2])}{X^{\ga,\f{1+\de}{2}}_T}.
\end{align*}
Applying Lemmas~\ref{le:kspace}, \ref{rffree} and \ref{lipschitz}, we obtain the desired estimate.

The Lipschitz continuity of $w^j$ follows somewhat indirectly from the local theory in Proposition~\ref{pro:local}.  Note that $w^j$ can be regarded as a perturbation of the free solution $-e^{-t\p_x^3} w^j(0)$ where $w^j(0) = -T(f^j,f^j) - J(f^j,f^j,f^j)]$ for short time.  Since
\[
\|e^{-t\p_x^3} w^1(0) - e^{-t\p_x^3} w^2(0)\|_{H^{\ga}} \lesssim_{\|f^1\|_{L^2}, \|f^2\|_{L^2}} \|f^1 - f^2\|_{L^2},
\]
 by Lemmas~\ref{tpmap1} and \ref{le:jmap}, we gain local Lipschitz property from Proposition~\ref{pro:local}.  Thus the Lipschitz property of $w^j$ follows from standard iteration. This proves Theorem~\ref{thm2}.

\bibliographystyle{plain}
\bibliography{bib}

\end{document}